\newcommand{\limp}{\longrightarrow}
\renewcommand{\lnot}{\neg\,}
\newcommand{\Fmbf}{\mathbf{Fm}}
\newcommand{\Fm}{\mathit{Fm}}
\newcommand{\inyields}{\vdash_\mathrm{IPC}}
\newcommand{\parayields}{\vdash_\mathrm{HIPWK}}
\newcommand{\inmodels}{\models_\mathrm{IPC}}
\newcommand{\paramodels}{\models_{\mathrm{IPWK}}}
\newcommand{\pryields}{\vdash_\mathrm{HPRL}}
\newcommand{\ppryields}{\vdash_{\HPPRL}}
\newcommand{\prmodels}{\models_\mathrm{PRL}}
\newcommand{\pprmodels}{\models_{\mathrm{PPRL}}}
\newcommand{\lvari}{\vdash^l}
\newcommand{\lrri}{\vdash^{re}}
\newcommand{\var}{\mathrm{var}}
\newcommand{\HPPRL}{\mathrm{HPRL}^{re}}
\newcommand{\val}{v^\#}
\newcommand{\sharpE}[1]{\mathbf{#1}^\#}
\newcommand{\NN}{\mathbb{N}}
\newcommand{\pow}{\mathcal{P}}
\newcommand{\Pl}{\mathcal{P}_{\hbox{\scriptsize{\textit{\l{}}}}}}
\newcommand{\lps}{\mathbb{L}\mathrm{PS}_3}
\newcommand{\rml}{\mathrm{RM}_3}
\theoremstyle{definition}
\newtheorem{thm}{Theorem}[section]
\newtheorem{lem}[thm]{Lemma}
\newtheorem{cor}[thm]{Corollary}
\newtheorem{dfn}[thm]{Definition}
\newtheorem{rem}[thm]{Remark}
  \title{Restricted Rules of Inference and Paraconsistency\footnote{The final version of this paper has been published online in Logic Journal of the IGPL (\href{https://doi.org/10.1093/jigpal/jzab019}{https://doi.org/10.1093/jigpal/jzab019}).}}
  \author[1]{Sankha S. Basu}
  \date{July 12, 2021}
  \affil[1]{Department of Mathematics\\
  Indraprastha Institute of Information Technology-Delhi\\
  New Delhi, India. (\href{mailto:sankha@iiitd.ac.in}{sankha@iiitd.ac.in})}
  \author[2]{Mihir K.\ Chakraborty}
  \affil[2]{School of Cognitive Science\\Jadavpur University\\ Kolkata, India. (\href{mailto:mihirc4@gmail.com}{mihirc4@gmail.com})}
\begin{document}
\maketitle
\begin{abstract}
  \addcontentsline{toc}{section}{Abstract}
  \noindent In this paper, we study two companions of a logic, viz., the left variable inclusion companion and the restricted rules companion, their nature and interrelations, especially in connection with paraconsistency. A sufficient condition for the two companions to coincide has also been proved. Two new logical systems - Intuitionistic Paraconsistent Weak Kleene logic (IPWK) and Paraconsistent Pre-Rough logic (PPRL) - are presented here as examples of logics of left variable inclusion. IPWK is the left variable inclusion companion of Intuitionistic Propositional logic (IPC) and is also the restricted rules companion of it. PPRL, on the other hand, is the left variable inclusion companion of Pre-Rough logic (PRL) but differs from the restricted rules companion of it. We have discussed algebraic semantics for these logics in terms of P\l{}onka sums. This amounts to introducing a contaminating truth value, intended to denote a state of indeterminacy.
\end{abstract}
\textbf{Keywords:}
Paraconsistency, Left variable inclusion, P\l{}onka sums, Contaminating element, Intuitionistic logic, Pre-rough logic.

\tableofcontents
\section{Introduction}\label{sec:introduction}

\subsection{Paraconsistency}\label{subsec:paraconsistency}
A \emph{paraconsistent} logic is commonly understood as a logic where an \emph{inconsistency} does not lead to \emph{triviality}. Here inconsistency refers to negation inconsistency, i.e. the presence of a contradiction, a pair of formulas such that one is the negation of the other. On the other hand, the entailment of every formula is called triviality. In other words, a paraconsistent logic is a logic where it is not always possible to derive every formula from a contradiction. Thus paraconsistent logics are described as those which give rise to inconsistent but non-trivial \emph{theories} (sets of formulas closed under logical consequence). Classical logic, and many non-classical logics, such as intuitionistic logic, fail in this because of the so-called principle of \emph{`explosion'}, which says that for any formula $\alpha$,
\[
\{\alpha,\neg\alpha\}\vdash\beta,
\]
where $\neg\alpha$ denotes `not $\alpha$', $\vdash$ is the logical consequence relation and $\beta$ is any formula, even if it is unrelated to $\alpha$. This rule is also commonly known as ECQ or \emph{ex contradictione sequitur quodlibet} (Latin for ``from a contradiction, anything follows''). If the logical consequence relation obeys the Tarskian transitivity  condition (see Definition \ref{dfn:consrel}), then it is paraconsistent iff it is not explosive, that is ECQ fails in it \cite{Priest1984}.

We call the following rule of explosion $\land$-ECQ. For any formulas $\alpha,\beta$, 
\[
\alpha\land\neg\alpha\vdash\beta,
\]
where $\land$ denotes conjunction. Paraconsistency can also be described as the failure of $\land$-ECQ. Now, of course, if a logic that obeys the Tarskian conditions, has the following rules of introduction and elimination of conjunction,
\[
\begin{array}{ll}
     \land I.&\{\alpha,\beta\}\vdash\alpha\land\beta,\\
     \land E.&\alpha\land\beta\vdash\alpha,\,\alpha\land\beta\vdash\beta,
\end{array}
\]
then ECQ and $\land$-ECQ are equivalent and hence the failure of one leads to the failure of the other. There is, however, a group of paraconsistent logics, called \emph{non-adjunctive} logics \cite{Priest1984} that lack $\land I$. On the other hand, the Paraconsistent Weak Kleene Logic (PWK) is an example of a paraconsistent logic that lacks $\land E$ \cite{CiuniCarrara2016}. In this paper we have a few more examples of logics without $\land E$ (see Subsections \ref{subsec:ipwk} and \ref{subsec:pprl} for specific examples and Section \ref{sec:lvari} for a class of logics). These logics are paraconsistent in the sense that ECQ fails in these. 

Another principle linked with the study of paraconsistency is the \emph{law of non-contradiction} (LNC), the syntactice version of which can be stated as follows: for any formula $\alpha$, $\neg(\alpha\land\neg\alpha)$ is a theorem, i.e. $\vdash\neg(\alpha\land\neg\alpha)$. Classical and intuitionistic logics validate explosion (both ECQ and $\land$-ECQ) as well as LNC. However, there are logics, such as the three-valued logic of \L{}ukasiewicz, $L_3$, where ECQ holds, but LNC fails\footnote{It may be noted that $L_3$ is sometimes presented with two conjunctions - a lattice theoretic $\land$ and a monoidal $\odot$ - and LNC fails for $\land$, i.e. there exists formulas $\alpha$ such that $\not\vdash\neg(\alpha\land\neg\alpha)$ in $L_3$.}. On the other hand, there are also examples of logics, such as Priest's Logic of Paradox (LP), where ECQ fails but LNC holds \cite{Priest1979}. Thus ECQ and LNC are independent of each other and failure of LNC alone, i.e. the existence of some formula $\alpha$ such that $\not\vdash\neg(\alpha\land\neg\alpha)$, cannot prevent explosion. Since paraconsistency is understood as the failure of explosion, while LP is regarded as a paraconsistent system, $L_3$ is not. Nevertheless, the failure of LNC marks an important departure from the classical way of handling a contradiction. Paraconsistent logics where both ECQ and LNC fail are sometimes called \emph{strong} paraconsistent logics \cite{Beziau2014}.

A still stronger notion, called \emph{dialetheism} is the acceptance of true contradictions or \emph{dialethias}. It can be easily shown that nontrivial logical systems that admit dialethias, or dialetheic logics are necessarily paraconsistent, i.e. ECQ must fail in these systems. Clearly, LNC fails in these systems as well. Thus dialetheic paraconsistent logics are strongly paraconsistent. The converse of this is, however, not the case.

To summarize, we have at least four possible flavors of paraconsistency, viz.,
\begin{enumerate}[label=(\roman*)]
    \item Paraconsistency owing to failure of ECQ
    \item Paraconsistency owing to failure of $\land$-ECQ
    \item Strong paraconsistency: failure of ECQ and LNC
    \item Dialetheic paraconsistency: existence of ``true contradictions'' \cite{Priest1984}
\end{enumerate}
It may be noted that (i) and (ii) above (equivalent in case the logic possesses $\land I$ and $\land E$) are sometimes referred to as \emph{weak} paraconsistency. It is clear from the above discussion that paraconsistent logics can be categorized into different classes depending on the classical principles that we choose to forego to obtain them. Comparisons between some well-known paraconsistent systems can be found in \cite{ArieliAvron2015, DuttaChakraborty2011, DuttaChakraborty2015, TarafderChakraborty2015}. More about paraconsistency along with its history and motivations can be found in \cite{PriestRoutley1989, Beziau2000, Mortensen1995}.

In the current paper, we explore the nature of paraconsistency obtained by imposing \emph{left variable inclusion constraints} \cite{Bonzio_2020} on logics that may be non-paraconsistent themselves. A known example of a paraconsistent logic obtained from classical propositional logic in this way is PWK, which has the same set of theorems as classical propositional logic. See \cite{Bonzio2017,CiuniCarrara2016,CiuniCarrara2019} for more on PWK. An algebraic interpretation for PWK, using \emph{P\l{}onka sums} of Boolean algebras, is given in \cite{Bonzio2017}. A Hilbert-style presentation of PWK consisting of the same set of axioms as classical propositional logic, and only one rule of inference which is a restricted version of the classical rule of \emph{modus ponens}, is also given in \cite{Bonzio2017}. Thus the following natural question arises. Is it always the case that Hilbert-style presentations of logics of left variable inclusion can be obtained by keeping the same set of axioms as the original logics and only restricting the rules of inference? We have shown here that the answer to this question is negative.

Two new logics - the \emph{intuitionistic paraconsistent weak Kleene logic} (IPWK) and the \emph{paraconsistent pre-rough logic} (PPRL) - have been presented here as specific examples of paraconsistent systems obtained by imposing left variable inclusion constraints on non-paraconsistent logics. IPWK and PPRL have the same sets of theorems as intuitionistic propositional logic (IPC) and pre-rough logic (PRL), respectively. Algebraic semantics for these logics can be given using P\l{}onka sums of Heyting algebras (in case of IPWK) and pre-rough algebras (in case of PPRL). We note that this amounts to having an additional designated truth-value, which we denote by $\omega$, to indicate a state of `indeterminacy' that is clearly distinct from falsity. This truth-value satisfies the so-called principle of \emph{contamination}, that is, the truth-value of any compound formula is $\omega$, i.e. indeterminate, as soon as the truth-value of any of its components is $\omega$. Such infectious or contaminating truth-values have been used to interpret a variety of phenomena that arise naturally in domains such as linguistics, epistemology and computer science. See \cite{CiuniFergusonSzmuc2019,CiuniFergusonSzmuc2019-2,CiuniCarrara2019} for more detailed discussions on this.

We note that while logics obtained either by imposing left variable inclusion constraints or by restricting the rules of inference always lead to paraconsistent systems, there are other well-known paraconsistent logics that cannot be so described. Moreover, the type of paraconsistency obtained in a paraconsistent system obtained via left variable inclusion differs from one instance to another. For example, we see that LNC holds in IPWK, while it fails in PPRL.

\subsection{Outline of the paper}
Our interest and focus is mainly on paraconsistency, we have hence discussed the topic at length in Subsection \ref{subsec:paraconsistency}. The tools we have used from abstract algebraic logic are discussed in Section \ref{sec:aal}. The definitions of P\l{}onka sums and logics of left variable inclusion, and the connection between these are also included in this section. 

In Section \ref{sec:lvari}, we have defined the \emph{restricted rules companion} of a logic with a Hilbert-style presentation, explored connections between the left variable inclusion and the restricted rules companions of logics. It has also been established here that both companions are paraconsistent. In Section \ref{sec:suffcond-Sr=Sl}, we first give a sufficient condition for the two companion logics to coincide. Then Subsections \ref{subsec:ipwk} and \ref{subsec:pprl} introduce the logics IPWK and PPRL as examples of logics of left variable inclusion, such that, in the first case, the two companions coincide, while in the second, they do not.

Finally, in Section \ref{sec:paracons_other}, a couple of examples of paraconsistent logics that do not belong to the classes of logics of left variable inclusion or restricted rules companion logics, have been included.

\section{Algebraic preliminaries and logics of left variable inclusion}\label{sec:aal}

\subsection{Algebraic preliminaries}
This subsection consists of some standard abstract algebraic notions that we will be using in the rest of the paper. These and more can be found in \cite{FontJansanaPigozzi2003, Font2016}

\begin{dfn}
  A \emph{logical language} is a set of connectives/ logical operators, each with a fixed arity $n\in\NN=\{0,1,2,3,\ldots\}$.

  Given a logical language $\mathcal{L}$ and a countably infinite set $V$ of propositional variables, the formulas are defined inductively in the usual way. The connectives/ operators can be regarded as the operation symbols of an algebraic similarity type and then the formulas are the terms of this similarity type over the set $V$. The resulting algebra of terms of type $\mathcal{L}$ over $V$ is called the \emph{algebra of formulas} or \emph{formula algebra} of type $\mathcal{L}$ over $V$. We will denote this by $\Fmbf$. The underlying set of this algebra is the set of formulas of type $\mathcal{L}$ over $V$, denoted by $\Fm$. The operations of $\Fmbf$ are those that are used for forming complex formulas and are associated with the operators in $\mathcal{L}$.
\end{dfn}

\begin{rem}
The formula algebra of some type $\mathcal{L}$ over a set of variables $V$ has the universal mapping property for the class of all algebras of type $\mathcal{L}$ over $V$, i.e. any function $f:V\to A$, where $A$ is the universe of an algebra $\mathbf{A}$ of type $\mathcal{L}$, can be uniquely extended to a homomorphism from $\Fmbf$ to $\mathbf{A}$. Thus, in fact, $\Fmbf$ is the absolutely free algebra of type $\mathcal{L}$ over $V$ \cite[Chapter II, \S 10]{BurrisSankappanavar1981}.
\end{rem}

The following definition is a variant of Tarski's famous definition of a \emph{finitary consequence operator}\footnote{An English translation of this paper from 1930 can be found in \cite{Tarski1983}.}.

\begin{dfn}\label{dfn:consrel}
Suppose $\Fmbf$ is an algebra of formulas of some type $\mathcal{L}$ over a set of variables $V$, with universe $\Fm$. A relation $\vdash\,\subseteq\pow(\Fm)\times \Fm$ is called a \emph{consequence relation on $\Fm$} if it satisfies the following conditions.
\begin{enumerate}[label=(C\arabic*),series=cons]
    \item For any $\Sigma\subseteq\Fm$, if $\varphi\in\Sigma$, then $\Sigma\vdash\varphi$. (\emph{Reflexivity})
    \item For any $\Sigma,\Delta\subseteq\Fm$ and $\varphi\in\Fm$, if $\Delta\vdash\psi$ for all $\psi\in\Sigma$, and $\Sigma\vdash\varphi$, then $\Delta\vdash\varphi$. (\emph{Transitivity})
\end{enumerate}
The above two conditions imply the following.
\begin{enumerate}[label=(C\arabic*),resume=cons]
    \item For any $\Sigma,\Delta\subseteq\Fm$ and $\varphi\in\Fm$, if $\Sigma\vdash\varphi$ and $\Sigma\subseteq\Delta$, then $\Delta\vdash\varphi$. (\emph{Monotonicity})
\end{enumerate}
\end{dfn}

\L o\'{s} and Suszko, in 1958 \cite{LosSuszko1958}, added the condition of invariance under substitutions to the Tarskian conditions.

  Given a logical language $\mathcal{L}$ and a set of variables $V$, a \emph{substitution} is a function $\sigma:V\to\Fm$, which then extends to a unique endomorphism of the formula algebra $\Fmbf$ of type $\mathcal{L}$ over $V$, via the universal mapping property.

\begin{dfn}\label{dfn:structurality}
  A consequence relation is called \emph{substitution-invariant} provided it satisfies the following condition in addition to (C1) and (C2), and hence also (C3), above.
  \begin{enumerate}[label=(C\arabic*), resume=cons]
      \item For any substitution $\sigma$, any $\Sigma\subseteq\Fm$, and any formula $\varphi$, if $\Sigma\vdash\varphi$, then $\sigma[\Sigma]\vdash\sigma(\varphi)$. (\emph{Structurality})
  \end{enumerate}
\end{dfn}

\begin{dfn}
  Given a logical language $\mathcal{L}$ and a set of variables $V$, a \emph{logic} or \emph{deductive system} in the language $\mathcal{L}$ is a pair $\mathcal{S}=\langle\Fmbf,\vdash_\mathcal{S}\rangle$, where $\Fmbf$ is the algebra of formulas of type $\mathcal{L}$ over $V$ and $\vdash_\mathcal{S}$ is a substitution-invariant consequence relation on $\Fm$, that is, a relation $\vdash_\mathcal{S}\,\subseteq\pow(\Fm)\times\Fm$ satisfying the conditions (C1), (C2), and (C4), and hence also (C3). Such a consequence relation is also called a syntactic consequence relation. (We will omit the subscript on $\vdash$ when there is no chance of confusion.) 
\end{dfn}

Lastly, the following is a variant of one of Tarski's original conditions.
\begin{dfn}\label{dfn:compact}
  A logic $\mathcal{S}=\langle\Fmbf,\vdash\rangle$ is said to be \emph{finitary} when its consequence relation satisfies the following additional property.
  \begin{enumerate}[label=(C\arabic*),resume=cons]
      \item For every $\Sigma\cup\{\varphi\}\subseteq\Fm$, if $\Sigma\vdash\varphi$, then there exists a finite $\Sigma^\prime\subseteq\Sigma$ such that $\Sigma^\prime\vdash\varphi$.
  \end{enumerate}
  The above property is also known as \emph{compactness}.
\end{dfn}

\begin{dfn}
  A \emph{(logical) matrix}\footnote{This concept was introduced by \L ukasiewicz and Tarski in the 1920's, an English translation of the work detailing this, titled ``Investigations into the sentential calculus'' can be found in \cite{Lukasiewicz1970}.} for a logical language $\mathcal{L}$, or an \emph{$\mathcal{L}$-matrix}, is an ordered pair $\langle\mathbf{A},F\rangle$ where $\mathbf{A}$ is an algebra of type $\mathcal{L}$ with universe $A$, and $F\subseteq A$; the algebra $\mathbf{A}$ is called the \emph{algebraic reduct} of the matrix and the set $F$ is called the set of \emph{designated values} or the \emph{filter} of the matrix.
\end{dfn}

A \emph{trivial} algebra is one with a one-element universe. All trivial algebras of the same type are isomorphic. In this paper, we will denote any trivial algebra of type $\mathcal{L}$ by $\mathbf{1}_\mathcal{L}$ and its universe by $\{\omega\}_\mathcal{L}$; we will drop the subscripts when the type is clear from the context. A \emph{trivial} $\mathcal{L}$-matrix is then the matrix $\left\langle\mathbf{1},\{\omega\}\right\rangle$, where $\mathbf{1}$ is the trivial algebra of type $\mathcal{L}$.

Given a logical language $\mathcal{L}$, and an $\mathcal{L}$-matrix $\langle\mathbf{A},F\rangle$, each formula $\varphi$ of type $\mathcal{L}$ over a set of variables $V$ has a unique interpretation in $\mathbf{A}$ depending on the values in $A$ (the universe of $\mathbf{A}$) that are assigned to its variables. Then, using the facts that $\Fmbf$ is absolutely freely generated from the set of variables and that $\mathbf{A}$ is an algebra over the same language, the interpretation of $\varphi$ can be expressed algebraically as $v(\varphi)$, where $v:\Fmbf\to\mathbf{A}$ is a homomorphism that maps each variable of $\varphi$ to its assigned value in $A$. Such a homomorphism whose domain is the formula algebra is called an \emph{assignment} or \emph{valuation}.

\begin{dfn}
  Given a logic $\mathcal{S}=\langle\Fmbf,\vdash\rangle$ in a language $\mathcal{L}$, an $\mathcal{L}$-matrix $\langle\mathbf{A},F\rangle$ is called a \emph{model of $\mathcal{S}$} if, for every valuation $v:\Fmbf\to\mathbf{A}$ and every $\Sigma\cup\{\varphi\}\subseteq\Fm$,
  \[
  \hbox{if }v[\Sigma]\subseteq F\hbox{ and }\Sigma\vdash\varphi\hbox{ then }v(\varphi)\in F.
  \]
  
  The class of all matrix models of $\mathcal{S}$ is denoted by $\mathsf{Mod}\,\mathcal{S}$.
\end{dfn}

\begin{dfn}
A logic $\mathcal{S}=\langle\Fmbf,\vdash\rangle$ in the language $\mathcal{L}$ is said to be \emph{complete relative to a class of $\mathcal{L}$-matrices} $\mathsf{M}$ if the following conditions are satisfied.
\begin{enumerate}[label=(\roman*)]
    \item $\mathsf{M}\subseteq\mathsf{Mod}\,\mathcal{S}$, i.e. each matrix in $\mathsf{M}$ is a model of $\mathcal{S}$;
    \item for every $\Sigma\cup\{\varphi\}\subseteq\Fm$ such that $\Sigma\not\vdash\varphi$, there is a matrix $\langle\mathbf{A},F\rangle\in\mathsf{M}$ and a valuation $v:\Fmbf\to\mathbf{A}$ such that $v[\Sigma]\subseteq F$ but $v(\varphi)\notin F$.
\end{enumerate}

Such a class of matrix models $\mathsf{M}$ is called a \emph{matrix semantics for} $\mathcal{S}$.
\end{dfn}

\begin{rem}\label{rem:mat_cons}
Logics may be defined using logical matrices as well. Let $\mathsf{M}$ be a class of $\mathcal{L}$-matrices. Then we can define a logic $\mathcal{S}_\mathsf{M}=\langle\Fmbf,\vdash_\mathsf{M}\rangle$, with $\vdash_\mathsf{M}$ defined as follows. For any $\Sigma\cup\{\varphi\}\subseteq\Fm$,
\[
\Sigma\vdash_\mathsf{M}\varphi\,\hbox{ iff for all }\,\langle\mathbf{A},F\rangle\in\mathsf{M}\,\hbox{ and for all valuations }\,v:\Fmbf\to\mathbf{A},\,v[\Sigma]\subseteq F\,\hbox{ implies }\,v(\varphi)\in F.
\]
It can then be observed that a logic $\mathcal{S}=\langle\Fmbf,\vdash\rangle$ in the language $\mathcal{L}$ is complete relative to a class of $\mathcal{L}$-matrices $\mathsf{M}$ when $\vdash$ coincides with $\vdash_\mathsf{M}$.
\end{rem}

\subsection{Logics of left variable inclusion and P\l{}onka sums}

The logics of left variable inclusion, or more precisely, the left variable inclusion companions of logics  and their connections with P\l{}onka sums of matrices have been discussed in \cite{Bonzio_2020}. We adapt parts of the discussion there as follows.

Suppose $\Fmbf$ is a formula algebra of type $\mathcal{L}$ over a set of variables $V$, with universe $\Fm$. For any formula $\varphi\in\Fm$, we denote the set of variables occurring in $\varphi$ by $\var(\varphi)$. Extending this notation, given any $\Gamma\subseteq\Fm$, we then set
\[
\var(\Gamma)=\bigcup\{\var(\varphi)\mid\,\varphi\in\Gamma\}.
\]

\begin{dfn}\label{dfn:left_var_inc}
Let $\mathcal{S}=\langle\Fmbf,\vdash\rangle$ be a logic. The \emph{left variable inclusion companion} of $\mathcal{S}$ is the pair $\mathcal{S}^l=\langle\Fmbf,\lvari\rangle$, where $\lvari\subseteq\pow(\Fm)\times\Fm$ is defined as follows.
\[
\Gamma\lvari\varphi\;\hbox{ iff }\;\hbox{there is a }\Gamma^\prime\subseteq\Gamma\hbox{ such that }\var(\Gamma^\prime)\subseteq\var(\varphi)\hbox{ and }\Gamma^\prime\vdash\varphi,
\]
where $\Gamma\cup\{\varphi\}\subseteq\Fm$.
\end{dfn}

\begin{rem}
Clearly, $\lvari\,\subseteq\,\vdash$. It can be easily checked that $\lvari$ as defined above satisfies the conditions (C1) -- (C4) in Definitions \ref{dfn:consrel}, \ref{dfn:structurality}, and is thus a substitution-invariant consequence relation. Hence the left variable inclusion companion $\mathcal{S}^l$ of the logic $\mathcal{S}$ is also a logic. It can also be checked that if $\vdash$ satisfies the condition (C5) in Definition \ref{dfn:compact}, then so does $\lvari$. Thus the left variable inclusion companions of finitary logics are finitary.
\end{rem}

It is known that the left variable inclusion companion of classical propositional logic is PWK \cite{Bonzio2017, CiuniCarrara2016, CiuniCarrara2019}. The left variable inclusion companions of Strong Kleene logic and of the paraconsistent logic LP have been considered in \cite{Szmuc2016}.

We next turn to P\l{}onka sums. These were introduced in \cite{Plonka1967} as a way of combining algebras in such a way that some of the properties of the original algebras are retained. The idea has found applications in various places (see the discussions on this in \cite{Bonzio2017, Bonzio_2020}).

\begin{dfn}
A \emph{directed system of algebras} consists of the following.

\begin{enumerate}[label=(\roman*)]
    \item A join semilattice $\mathbf{I}=\langle I,\le\rangle$;
    \item a family of algebras of the same type $\{\mathbf{A}_i\mid\,i\in I\}$ with disjoint universes;
    \item a homomorphism $f_{ij}:\mathbf{A}_i\to\mathbf{A}_j$, for every $i,j\in I$ with $i\le j$. The resulting set of homomorphisms must satisfy the following two conditions.
    \begin{enumerate}
        \item $f_{ii}$ is the identity homomorphism for each $i\in I$;
        \item for $i,j,k\in I$ such that $i\le j\le k$, $f_{ik}=f_{jk}\circ f_{ij}$.
    \end{enumerate}
\end{enumerate}
\end{dfn}

\begin{dfn}
Let $X$ be a directed system of algebras as defined above. The \emph{P\l{}onka sum} of $X$, denoted by $\Pl(X)$ or $\Pl(A_i)_{i\in I}$, is the algebra defined as follows.
\begin{enumerate}[label=(\roman*)]
    \item The universe of $\Pl(X)$ is the union $\displaystyle\bigcup_{i\in I}A_i$.
    \item The basic operations of $\Pl(X)$ are defined in terms of the homomorphisms $f_{ij}$ and the basic operations of the member algebras in $X$ as follows. For every basic operation $f$ of arity $n\ge1$, and $a_1,\ldots,a_n\in\displaystyle\bigcup_{i\in I}A_i$
    \[
    f^{\Pl(X)}(a_1,\ldots,a_n):=f^{A_j}\left(f_{i_1j}(a_1),\ldots,f_{i_nj}(a_n)\right),
    \]
    where $a_1\in A_{i_1},\ldots,a_n\in A_{i_n}$ and $j=i_1\lor\cdots\lor i_n$.
    
    In case nullary operations or constants\footnote{P\l onka sums of algebras with nullary operations were defined in \cite{Plonka1984}.} are present in the algebras in $X$, we need to assume that the indexing semilattice $\mathbf{I}$ has a bottom element, $\bot$, and for each nullary operation $f$, $f^{\Pl(X)}:=f^{A_\bot}$.
\end{enumerate}
\end{dfn}

The concept of directed systems of algebras is next extended to logical matrices in \cite{Bonzio_2020} as follows.

\begin{dfn}
A \emph{directed system of matrices} consists of the following.
\begin{enumerate}[label=(\roman*)]
    \item A join semilattice $\mathbf{I}=\langle I,\le\rangle$;
    \item a family of matrices $\{\langle\mathbf{A}_i,F_i\rangle\mid\,i\in I\}$, where $\mathbf{A}_i$ are algebras of the same type with disjoint universes $A_i$;
    \item a homomorphism $f_{ij}:\mathbf{A}_i\to\mathbf{A}_j$ such that $f_{ij}[F_i]\subseteq F_j$, for every $i,j\in I$ such that $i\le j$. The resulting set of homomorphisms must satisfy the following two conditions.
    \begin{enumerate}
        \item $f_{ii}$ is the identity homomorphism for each $i\in I$;
        \item for $i,j,k\in I$ such that $i\le j\le k$, $f_{ik}=f_{jk}\circ f_{ij}$.
    \end{enumerate}
\end{enumerate}
\end{dfn}

\begin{dfn}
Suppose $X$ is a directed system of matrices as defined above. The \emph{P\l{}onka sum} of $X$ is then defined as the following matrix.
\[
\Pl(X):=\left\langle\Pl(\mathbf{A}_i)_{i\in I},\displaystyle\bigcup_{i\in I}F_i\right\rangle.
\]
\end{dfn}

Given a class $\mathsf{M}$ of matrices, $\Pl(\mathsf{M})$ will denote the class of P\l{}onka sums of directed systems of matrices in $\mathsf{M}$.

Following \cite{Bonzio_2020}, we now turn to the construction of a special case of P\l{}onka sums of algebras. 

Suppose $\mathbf{A}$ is an algebra of some type $\mathcal{L}$ and $\mathbf{1}$ is the trivial algebra of the same type. Then the two-element family $\{\mathbf{A},\mathbf{1}\}$ equipped with the identity endomorphisms and the unique homomorphism $f:\mathbf{A}\to\mathbf{1}$ is a directed system of algebras. The P\l{}onka sum of this directed system of algebras is denoted by $\mathbf{A}\oplus\mathbf{1}$. Thus $\mathbf{A}\oplus\mathbf{1}$ is the $\mathcal{L}$-algebra with universe $A\cup\{\omega\}$. The basic operations of this algebra are defined as follows. For any $n$-ary basic operation $f$,
\[
f^{\mathbf{A}\oplus\mathbf{1}}(a_1,\ldots,a_n):=\left\{\begin{array}{ll}
     f^{\mathbf{A}}(a_1,\ldots,a_n)&\hbox{if }a_1,\ldots,a_n\in A\\
     \omega&\hbox{otherwise}. 
\end{array}\right.
\]

This construction can then be lifted to matrices as follows. Suppose $\langle\mathbf{A},F\rangle$ is an $\mathcal{L}$-matrix and $\langle\mathbf{1},\{\omega\}\rangle$ is the trivial $\mathcal{L}$-matrix. Then the two-element family $\left\{\langle\mathbf{A},F\rangle,\langle\mathbf{1},\{\omega\}\rangle\right\}$ equipped with the identity endomorphisms and the unique homomorphism $f:\mathbf{A}\to\mathbf{1}$ is a directed system of matrices. The P\l{}onka sum of this directed system of matrices is the $\mathcal{L}$-matrix $\left\langle\mathbf{A}\oplus\mathbf{1},F\cup\{\omega\}\right\rangle$.

Lastly, the following two theorems proved in \cite{Bonzio_2020} are included below.

\begin{thm}{\cite[Lemma 13]{Bonzio_2020}}\label{thm:Plonka_sound}
Let $\mathcal{S}=\langle\Fmbf,\vdash\rangle$ be a logic and $X$ be a directed system of models of $\mathcal{S}$. Then $\Pl(X)$ is a model of the left variable inclusion companion, $\mathcal{S}^l=\langle\Fmbf,\lvari\rangle$, of $\mathcal{S}$.
\end{thm}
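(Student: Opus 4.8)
The plan is to verify the defining condition of a model of $\mathcal{S}^l$ directly: for every valuation $v\colon\Fmbf\to\Pl(\mathbf{A}_i)_{i\in I}$ into the algebraic reduct of $\Pl(X)$ and every $\Sigma\cup\{\varphi\}\subseteq\Fm$, if $v[\Sigma]\subseteq\bigcup_{i\in I}F_i$ and $\Sigma\lvari\varphi$, then $v(\varphi)\in\bigcup_{i\in I}F_i$. The first ingredient I would isolate is an \emph{index lemma} pinning down which component a formula is sent to. Fix $v$ and, for each variable $x$, let $i_x$ be the unique index with $v(x)\in A_{i_x}$ (unique since the universes are disjoint). For a formula $\psi$ set $\iota(\psi):=\bigvee_{x\in\var(\psi)}i_x$, a finite and hence well-defined join, with the empty join taken to be the bottom element $\bot$ when constants are present. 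A straightforward induction on the structure of $\psi$, using only the definition of the P\l{}onka-sum operations, then shows $v(\psi)\in A_{\iota(\psi)}$: the base cases are immediate, and in the inductive step the index of $f(\psi_1,\dots,\psi_n)$ is exactly $\iota(\psi_1)\lor\cdots\lor\iota(\psi_n)=\bigvee_{x\in\var(f(\psi_1,\dots,\psi_n))}i_x$, because $\var$ distributes over the subformulas. An immediate consequence is that $\var(\psi)\subseteq\var(\chi)$ implies $\iota(\psi)\le\iota(\chi)$.

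The second and central ingredient is a \emph{commutation lemma} relating $v$ to a valuation into a single component. Given $\Sigma\lvari\varphi$, fix a witness $\Sigma'\subseteq\Sigma$ with $\var(\Sigma')\subseteq\var(\varphi)$ and $\Sigma'\vdash\varphi$, and put $j:=\iota(\varphi)$. Define $w\colon\Fmbf\to\mathbf{A}_j$ by setting $w(x):=f_{i_x,j}(v(x))$ on those variables $x$ with $i_x\le j$ (in particular on all $x\in\var(\varphi)$, since there $i_x\le j$) and assigning arbitrary values in $A_j$ elsewhere (the set $A_j$ is nonempty, as it contains $v(\varphi)$ by the index lemma), then extending to a homomorphism by the universal mapping property. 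The claim I would prove, again by induction on structure, is that for every $\psi$ with $\var(\psi)\subseteq\var(\varphi)$ one has $w(\psi)=f_{\iota(\psi),j}(v(\psi))$. The inductive step is where the structure of P\l{}onka sums is really used: one expands both $v(f(\psi_1,\dots,\psi_n))$ and $w(f(\psi_1,\dots,\psi_n))$ via the operation definition, then collapses the nested homomorphisms using the coherence condition $f_{ik}=f_{jk}\circ f_{ij}$ along the chain $\iota(\psi_t)\le\iota(\psi)\le j$, so that $w(\psi)$ and $f_{\iota(\psi),j}(v(\psi))$ reduce to the same expression inside $\mathbf{A}_j$.

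With both lemmas in hand the conclusion follows quickly. For each $\sigma\in\Sigma'$ we have $\var(\sigma)\subseteq\var(\varphi)$, so $v(\sigma)\in A_{\iota(\sigma)}$ with $\iota(\sigma)\le j$; since $v(\sigma)\in\bigcup_i F_i$ and the $A_i$ are disjoint, in fact $v(\sigma)\in F_{\iota(\sigma)}$, whence $w(\sigma)=f_{\iota(\sigma),j}(v(\sigma))\in f_{\iota(\sigma),j}[F_{\iota(\sigma)}]\subseteq F_j$ by the matrix condition $f_{ij}[F_i]\subseteq F_j$. Thus $w[\Sigma']\subseteq F_j$, and since $\langle\mathbf{A}_j,F_j\rangle$ is a model of $\mathcal{S}$ and $\Sigma'\vdash\varphi$, we obtain $w(\varphi)\in F_j$. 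Finally $\iota(\varphi)=j$ and $f_{jj}$ is the identity, so the commutation lemma gives $w(\varphi)=f_{jj}(v(\varphi))=v(\varphi)$, and therefore $v(\varphi)\in F_j\subseteq\bigcup_i F_i$, as required. I expect the commutation lemma to be the main obstacle: its very statement requires guessing the correct push-forward valuation $w$ together with the exponent $\iota(\psi)$, and its proof is the one place where both coherence axioms on the maps $f_{ij}$ and the precise P\l{}onka-sum operation formula must be invoked simultaneously.
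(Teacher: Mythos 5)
The paper states this result as an import from Bonzio et al.\ (Lemma 13 of \cite{Bonzio_2020}) and gives no proof of its own, so there is nothing internal to compare against. Your argument is correct and complete: the index lemma ($v(\psi)\in A_{\iota(\psi)}$ with $\iota(\psi)=\bigvee_{x\in\var(\psi)}i_x$), the push-forward valuation $w$ into $\mathbf{A}_{\iota(\varphi)}$ with the commutation identity $w(\psi)=f_{\iota(\psi),j}(v(\psi))$ for $\var(\psi)\subseteq\var(\varphi)$, and the final appeal to $\langle\mathbf{A}_j,F_j\rangle$ being a model of $\mathcal{S}$ together with $f_{ij}[F_i]\subseteq F_j$ constitute exactly the standard proof of this lemma, with the edge cases (constants landing in $A_\bot$, nonemptiness of $A_j$, $f_{jj}=\mathrm{id}$) handled properly.
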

  
\begin{thm}{\cite[Theorem 14]{Bonzio_2020}}\label{thm:Plonka_complete}
  Let $\mathcal{S}=\langle\Fmbf,\vdash\rangle$ be a logic and $\mathsf{M}$ be a class of matrices, of the same type as $\Fmbf$, containing the trivial matrix $\langle\mathbf{1},\{\omega\}\rangle$. If $\mathcal{S}$ is complete relative to $\mathsf{M}$, then $\mathcal{S}^l=\langle\Fmbf,\lvari\rangle$ is complete relative to $\Pl(\mathsf{M})$.
\end{thm}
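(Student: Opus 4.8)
The plan is to verify the two clauses in the definition of completeness for $\mathcal{S}^l$ relative to $\Pl(\mathsf{M})$. The first clause, that every matrix in $\Pl(\mathsf{M})$ is a model of $\mathcal{S}^l$, comes essentially for free: since $\mathcal{S}$ is complete relative to $\mathsf{M}$, we have in particular $\mathsf{M}\subseteq\mathsf{Mod}\,\mathcal{S}$, so any directed system of matrices drawn from $\mathsf{M}$ is a directed system of models of $\mathcal{S}$, and Theorem \ref{thm:Plonka_sound} then tells us its P\l onka sum is a model of $\mathcal{S}^l$. Hence $\Pl(\mathsf{M})\subseteq\mathsf{Mod}\,\mathcal{S}^l$. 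All of the real work lies in the second, adequacy clause.

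For adequacy, I would start from an arbitrary pair $\Gamma\cup\{\varphi\}\subseteq\Fm$ with $\Gamma\not\lvari\varphi$ and manufacture a refuting valuation into a suitable P\l onka sum. Unwinding Definition \ref{dfn:left_var_inc}, the hypothesis $\Gamma\not\lvari\varphi$ says precisely that no subset of $\Gamma$ whose variables are confined to $\var(\varphi)$ can $\vdash$-derive $\varphi$. The right subset to focus on is the \emph{largest} such one, $\Gamma_\varphi:=\{\gamma\in\Gamma\mid\var(\gamma)\subseteq\var(\varphi)\}$; since $\var(\Gamma_\varphi)\subseteq\var(\varphi)$, the hypothesis forces $\Gamma_\varphi\not\vdash\varphi$. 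Taking the largest subset is essential, because it is what will let completeness of $\mathcal{S}$ cover \emph{all} premises whose variables are confined to $\var(\varphi)$. Invoking completeness of $\mathcal{S}$ relative to $\mathsf{M}$, I obtain a matrix $\langle\mathbf{A},F\rangle\in\mathsf{M}$ and a valuation $v\colon\Fmbf\to\mathbf{A}$ with $v[\Gamma_\varphi]\subseteq F$ but $v(\varphi)\notin F$.

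The matrix that will do the refuting is the P\l onka sum $\langle\mathbf{A}\oplus\mathbf{1},F\cup\{\omega\}\rangle$, which lies in $\Pl(\mathsf{M})$ exactly because the trivial matrix $\langle\mathbf{1},\{\omega\}\rangle$ was assumed to belong to $\mathsf{M}$; this is where that hypothesis is used. I would define $\bar v\colon\Fmbf\to\mathbf{A}\oplus\mathbf{1}$ on variables by $\bar v(x)=v(x)$ when $x\in\var(\varphi)$ and $\bar v(x)=\omega$ otherwise, extending to a homomorphism by the universal mapping property. Two evaluations then need checking. First, because $\varphi$ involves only variables in $\var(\varphi)$, all of which $\bar v$ sends into $A$, an induction on the structure of $\varphi$ (using that the operations of $\mathbf{A}\oplus\mathbf{1}$ agree with those of $\mathbf{A}$ on $A$-arguments, and that constants are interpreted in the bottom component $\mathbf{A}$) gives $\bar v(\varphi)=v(\varphi)\in A\setminus F$, so $\bar v(\varphi)\notin F\cup\{\omega\}$. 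Second, I must show $\bar v[\Gamma]\subseteq F\cup\{\omega\}$: for $\gamma\in\Gamma_\varphi$ the same reasoning yields $\bar v(\gamma)=v(\gamma)\in F$, while for $\gamma\in\Gamma\setminus\Gamma_\varphi$ the formula $\gamma$ contains a variable outside $\var(\varphi)$, hence a variable sent to $\omega$.

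The conceptual heart of the argument is the pairing of this choice of $\bar v$ with the \emph{contamination} property of $\mathbf{A}\oplus\mathbf{1}$: any formula in which some variable is assigned $\omega$ must itself evaluate to $\omega$. This contamination is a routine induction on formula structure from the clause that $f^{\mathbf{A}\oplus\mathbf{1}}$ returns $\omega$ as soon as one argument lies outside $A$, and it is exactly what lets the premises in $\Gamma\setminus\Gamma_\varphi$ be absorbed harmlessly into the designated set, each evaluating to $\omega\in F\cup\{\omega\}$. I do not anticipate any serious obstacle beyond keeping the bookkeeping of variables straight, so the two evaluations split cleanly along the partition $\Gamma=\Gamma_\varphi\cup(\Gamma\setminus\Gamma_\varphi)$. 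Once both are in place, $\langle\mathbf{A}\oplus\mathbf{1},F\cup\{\omega\}\rangle$ together with $\bar v$ refutes $\Gamma\lvari\varphi$, establishing adequacy and completing the proof.
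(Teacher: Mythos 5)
The paper does not prove this statement: it is imported verbatim from Bonzio et al.\ as \cite[Theorem 14]{Bonzio_2020}, so there is no in-paper proof to compare against. Your argument is correct and is essentially the standard one — soundness of the P\l{}onka sum via Theorem \ref{thm:Plonka_sound}, and adequacy by extracting the maximal variable-confined subset $\Gamma_\varphi$, refuting $\Gamma_\varphi\vdash\varphi$ in some $\langle\mathbf{A},F\rangle\in\mathsf{M}$, and contaminating the remaining premises with $\omega$ in $\langle\mathbf{A}\oplus\mathbf{1},F\cup\{\omega\}\rangle$ — which is exactly the technique the paper itself deploys for the special case in Theorem \ref{thm:pil-il_sem_conn}.
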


\section{Left variable inclusion and restricted rules companions of logics and paraconsistency}\label{sec:lvari}

In the following discussion, $\mathcal{L}$ is a logical language, $\Fmbf$ is the formula algebra of type $\mathcal{L}$ over a countably infinite set of variables $V$, and $\Fm$ is the universe of $\Fmbf$.

The next two lemmas follow immediately from the definition of a logic of left variable inclusion.

\begin{lem}\label{lem:S-thm_iff_Sl-thm}
  Suppose $\mathcal{S}=\langle\Fmbf,\vdash\rangle$ is a logic and $\mathcal{S}^l=\langle\Fmbf,\lvari\rangle$ is the left variable inclusion companion of $\mathcal{S}$. Then for any formula $\varphi\in\Fm$, $\vdash\varphi$ iff $\lvari\varphi$.
\end{lem}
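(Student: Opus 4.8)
The plan is to prove the biconditional by unwinding the definition of the left variable inclusion companion in the special case where the premise set is empty, i.e. $\Gamma=\emptyset$. Recall that a theorem of a logic is exactly a formula derivable from the empty set of premises: $\vdash\varphi$ means $\emptyset\vdash\varphi$, and $\lvari\varphi$ means $\emptyset\lvari\varphi$. So the statement to establish is $\emptyset\vdash\varphi$ iff $\emptyset\lvari\varphi$.

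For the direction from $\lvari\varphi$ to $\vdash\varphi$, I would simply invoke the observation already recorded in the remark following Definition \ref{dfn:left_var_inc}, namely that $\lvari\,\subseteq\,\vdash$; hence $\emptyset\lvari\varphi$ immediately gives $\emptyset\vdash\varphi$. This direction requires no work beyond citing that containment.

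For the converse, suppose $\emptyset\vdash\varphi$. I would apply Definition \ref{dfn:left_var_inc} with $\Gamma=\emptyset$: to conclude $\emptyset\lvari\varphi$, I need a subset $\Gamma'\subseteq\emptyset$ with $\var(\Gamma')\subseteq\var(\varphi)$ and $\Gamma'\vdash\varphi$. The only candidate is $\Gamma'=\emptyset$. Then $\var(\emptyset)=\bigcup\{\var(\psi)\mid\psi\in\emptyset\}=\emptyset\subseteq\var(\varphi)$ holds trivially, and $\emptyset\vdash\varphi$ is exactly the hypothesis. Thus the witness $\Gamma'=\emptyset$ works and $\emptyset\lvari\varphi$ follows.

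I do not anticipate any genuine obstacle here; the lemma is essentially a definitional unwinding, which is why the excerpt introduces it as following ``immediately from the definition.'' The only point requiring a moment's care is the bookkeeping convention that $\var(\emptyset)=\emptyset$ and that theoremhood corresponds to derivability from the empty premise set, so that the empty subset serves as the required witness; once these conventions are made explicit the proof is a one-line verification in each direction.
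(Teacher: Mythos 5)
Your proof is correct and matches the paper's intent exactly: the paper gives no explicit argument, stating only that the lemma ``follows immediately from the definition,'' and your definitional unwinding with witness $\Gamma'=\emptyset$ (plus the containment $\lvari\,\subseteq\,\vdash$ for the converse) is precisely that immediate argument made explicit.
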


\begin{lem}\label{lem:Sl_cons}
Suppose $\mathcal{S}=\langle\Fmbf,\vdash\rangle$ is a logic and $\mathcal{S}^l=\langle\Fmbf,\lvari\rangle$ is the left variable inclusion companion of $\mathcal{S}$. Then for any $\Sigma\cup\{\varphi\}\subseteq\Fm$, if $\var(\varphi)\cap\var(\Sigma)=\emptyset$ and $\not\vdash\varphi$, then $\Sigma\not\lvari\varphi$.
\end{lem}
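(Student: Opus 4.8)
The plan is a direct proof by contradiction under the two standing hypotheses $\var(\varphi)\cap\var(\Sigma)=\emptyset$ and $\not\vdash\varphi$. Suppose, toward a contradiction, that $\Sigma\lvari\varphi$. Unwinding the definition of the left variable inclusion companion (Definition \ref{dfn:left_var_inc}), this yields a witnessing subset $\Sigma'\subseteq\Sigma$ satisfying $\var(\Sigma')\subseteq\var(\varphi)$ together with $\Sigma'\vdash\varphi$. The goal is to show that such a $\Sigma'$ forces $\vdash\varphi$, contradicting the hypothesis $\not\vdash\varphi$.

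The core of the argument is a short computation with variable sets. Since $\Sigma'\subseteq\Sigma$, monotonicity of $\var(\cdot)$ gives $\var(\Sigma')\subseteq\var(\Sigma)$. Combining this with the witness condition $\var(\Sigma')\subseteq\var(\varphi)$ and the disjointness hypothesis $\var(\varphi)\cap\var(\Sigma)=\emptyset$, I obtain
\[
\var(\Sigma')\subseteq\var(\Sigma)\cap\var(\varphi)=\emptyset,
\]
so that $\var(\Sigma')=\emptyset$; that is, no propositional variable occurs in any formula of $\Sigma'$.

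From $\var(\Sigma')=\emptyset$ I would conclude $\Sigma'=\emptyset$, and then $\Sigma'\vdash\varphi$ reads $\emptyset\vdash\varphi$, i.e. $\vdash\varphi$, which is the desired contradiction; hence no witness can exist and $\Sigma\not\lvari\varphi$, as required. Note that structurality (C4) and the inclusion $\lvari\,\subseteq\,\vdash$ are not needed here: the statement is, in effect, a purely combinatorial consequence of the variable-inclusion side condition built into $\lvari$.

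The only step that is not pure set-theoretic bookkeeping is the inference $\var(\Sigma')=\emptyset\Rightarrow\Sigma'=\emptyset$, and this is where I expect the (modest) crux of the proof to sit. It is immediate once one observes that every formula is generated from the propositional variables in $V$, so that each formula contains at least one variable and the empty set is the only variable-free subset of $\Fm$. If the language were allowed nullary connectives, a nonempty variable-free $\Sigma'$ could occur, and the statement would then require the additional assumption that such constant-only premises never entail a non-theorem; in the setting relevant to the applications that follow (where the premise sets of interest, such as $\{\alpha,\neg\alpha\}$ with $\var(\alpha)\cap\var(\varphi)=\emptyset$, genuinely contain variables) this complication does not interfere.
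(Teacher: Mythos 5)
Your argument is exactly the one the paper has in mind: the paper offers no proof at all, asserting that the lemma ``follows immediately from the definition,'' and the intended immediate argument is precisely your chain $\var(\Sigma')\subseteq\var(\Sigma)\cap\var(\varphi)=\emptyset$, hence $\Sigma'=\emptyset$, hence $\vdash\varphi$. So as a reconstruction of the paper's reasoning the proposal is on target, and within a language free of nullary connectives it is a complete proof.

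The caveat you raise in your final paragraph deserves more weight than you give it, though: it is not a hypothetical complication but an actual counterexample to the lemma as stated, in the very logics the paper goes on to treat. IPC is presented here over the language $\{\land,\lor,\limp,\neg,0,1\}$ with axiom $0\limp\alpha$, so taking $\Sigma=\{0\}$ and $\varphi=p$ gives $\var(\varphi)\cap\var(\Sigma)=\emptyset$ and $\not\inyields p$, yet $\Sigma'=\{0\}$ witnesses $\{0\}\lvari p$ since $\var(\{0\})=\emptyset\subseteq\var(p)$ and $\{0\}\inyields p$. Hence the inference $\var(\Sigma')=\emptyset\Rightarrow\Sigma'=\emptyset$, which you correctly identify as the crux, genuinely fails there, and the lemma needs either the standing assumption that $\mathcal{L}$ has no nullary connectives or a reformulated hypothesis (e.g.\ that every formula of $\Sigma$ contains a variable outside $\var(\varphi)$, which is what actually holds in the application to $\Sigma=\{p,\neg p\}$ in Theorem \ref{thm:Sl-paraconsistent}). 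You were right to isolate this point; the only improvement would be to state plainly that it refutes, rather than merely complicates, the unqualified statement.
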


We recall here that a logic $\mathcal{S}$ may be induced syntactically via a given Hilbert-style presentation\footnote{For definitions of Hilbert-style presentations, axioms, rules of inference, and how a logic may be induced via this method, one can see, for example, \cite{FontJansanaPigozzi2003,Mendelson}}. Henceforth, such a logic induced by a given Hilbert-style presentation will be referred to as a \emph{Hilbert-style logic}.

\begin{dfn}\label{dfn:res_rules_comp}
Suppose $\mathcal{S}=\langle\Fmbf,\vdash\rangle$ is a Hilbert-style logic with $A\subseteq\Fm$ as the set of axioms and $R_\mathcal{S}\subseteq\pow(\Fm)\times\Fm$ as the set of rules of inference.

We now define the \emph{restricted rules companion} of $\mathcal{S}$, and denote it by $\mathcal{S}^{re}=\langle\Fmbf,\lrri\rangle$, as the Hilbert-style logic with the following sets of axioms and rules.

\begin{tabular}{lcl}
       Set of axioms&=&$A$, and\\
       set of rules of inference&=&$R_{\mathcal{S}^{re}}=\left\{\dfrac{\Gamma}{\alpha}\in R_\mathcal{S}\mid\,\var(\Gamma)\subseteq\var(\alpha)\right\}$.
  \end{tabular}
\end{dfn}
  
For example, in \cite{Bonzio2017}, we see PWK as an example of the restricted rules companion of classical propositional logic (CPC).

\begin{thm}\label{thm:S-thm_iff_Sr-thm}
  Suppose $\mathcal{S}=\langle\Fmbf,\vdash\rangle$ is a Hilbert-style logic with $A$ and $R_\mathcal{S}$ as its sets of axioms and rules of inference, respectively. Let $\mathcal{S}^{re}=\langle\Fmbf,\lrri\rangle$ be the restricted rules companion of $\mathcal{S}$ as described above. Then for any $\varphi\in\Fm$, $\lrri\varphi$ iff $\vdash\varphi$.
\end{thm}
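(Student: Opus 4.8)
The plan is to prove the two inclusions of theoremhood separately, the direction $\lrri\varphi\Rightarrow\vdash\varphi$ being immediate. Since $\mathcal{S}^{re}$ and $\mathcal{S}$ share the axiom set $A$ and $R_{\mathcal{S}^{re}}\subseteq R_\mathcal{S}$, every derivation witnessing $\lrri\varphi$ is already a derivation in $\mathcal{S}$, so $\vdash\varphi$. All of the work is in the converse, which I would establish by induction on the length $n$ of a shortest $\mathcal{S}$-derivation $\psi_1,\dots,\psi_n=\varphi$ of $\varphi$ from the empty set of premises.

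For the base case $n=1$, the formula $\varphi$ must be an instance of an axiom in $A$, and since $\mathcal{S}^{re}$ has the same axioms, $\lrri\varphi$. For the inductive step, if $\varphi$ is again an axiom instance we are done, so suppose $\varphi=\psi_n$ is obtained by a rule instance $(\Gamma',\varphi)\in R_\mathcal{S}$ whose premises $\Gamma'$ all occur among $\psi_1,\dots,\psi_{n-1}$. Each $\gamma\in\Gamma'$ is then derivable in $\mathcal{S}$ by the corresponding prefix of the derivation, which is strictly shorter, so the induction hypothesis gives $\lrri\gamma$ for every $\gamma\in\Gamma'$.

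The crux is to apply this rule inside $\mathcal{S}^{re}$, where only variable-inclusive rules are available. If $\var(\Gamma')\subseteq\var(\varphi)$, the instance $(\Gamma',\varphi)$ already lies in $R_{\mathcal{S}^{re}}$, and applying it to the premises $\Gamma'$ yields $\lrri\varphi$. Otherwise I would collapse the extra premise variables by a substitution: pick $\tau$ that fixes every variable of $\var(\varphi)$ and sends every other variable to a fixed element of $\var(\varphi)$ (or, in the degenerate case $\var(\varphi)=\emptyset$, to a closed formula, one of which must exist since then $\varphi$ itself is closed). Then $\tau(\varphi)=\varphi$ while $\var(\tau[\Gamma'])\subseteq\var(\varphi)$, so the instance $(\tau[\Gamma'],\varphi)$ is variable-inclusive and, being a substitution instance of $(\Gamma',\varphi)\in R_\mathcal{S}$, belongs to $R_{\mathcal{S}^{re}}$. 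Moreover, because $\mathcal{S}^{re}$ is itself a substitution-invariant logic, $\lrri\gamma$ implies $\lrri\tau(\gamma)$ for each $\gamma\in\Gamma'$; applying the restricted rule $(\tau[\Gamma'],\varphi)$ to these premises gives $\lrri\varphi$.

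The step I expect to need the most care is this substitution argument. I must verify that $R_{\mathcal{S}^{re}}$ is genuinely closed under substitution, so that $\mathcal{S}^{re}$ is a logic whose theorems are substitution-closed; this follows because $\var(\sigma[\Gamma])\subseteq\var(\sigma(\alpha))$ whenever $\var(\Gamma)\subseteq\var(\alpha)$. I must also handle the boundary case of a variable-free conclusion, where the extra variables cannot be folded into $\var(\varphi)$ and must instead be sent to a closed term. Everything else is routine derivation bookkeeping.
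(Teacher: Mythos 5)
Your proof is correct and follows essentially the same route as the paper's: induction on the length of the $\mathcal{S}$-derivation, with the key step being a substitution that fixes $\var(\varphi)$ and collapses all other variables into a variable of $\varphi$ (or a $0$-ary operator when $\varphi$ is closed), so that the final rule application becomes variable-inclusive. The only cosmetic difference is that the paper explicitly transforms the glued $\mathcal{S}^{re}$-derivations of the premises under this substitution, whereas you invoke the structurality of $\mathcal{S}^{re}$ to the same effect.
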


\begin{proof}
    The left-to-right direction is easy to see, because if $\lrri\varphi$, then there is a proof $D$ of $\varphi$ that consists of instances of the axioms in $A$ and rules in $R_{\mathcal{S}^{re}}$. Since members of $A$ are also axioms for $\mathcal{S}$, and each rule in $R_{\mathcal{S}^{re}}$ is an instance of the unrestricted version of it in $R_\mathcal{S}$, $D$ is also a proof of $\varphi$ in the logic $\mathcal{S}$.
  
  For the right-to-left direction, suppose $\vdash\varphi$ and $D=\langle\varphi_1,\ldots,\varphi_n\rangle$ is a proof of $\varphi$ in the deductive system for $\mathcal{S}$. We will show, by induction on the length $n$ of $D$, that this proof of $\varphi$ can be translated into another proof of it that uses only the rules in $R_{\mathcal{S}^{re}}$.
  
  For the base case, that is, when $n=1$, $\varphi=\varphi_1$ must be an axiom and there is nothing to prove. 
  
  As our induction hypothesis, we assume that the claim holds for all $1\le l<n$, where $n$ is some positive integer. That is, each $\varphi_l$ in $D$, where $1\le l<n$, has a proof in $\mathcal{S}^{re}$.
  
  For the induction step, we need to show that $\varphi_n$ has a proof in $\mathcal{S}^{re}$. If $\varphi_n$ is an axiom, then the argument is the same as in the base case. Now suppose $\varphi_n$ is obtained via one of the rules $\Theta\in R_{\mathcal{S}}$ from some $D^\prime\subseteq\{\varphi_1,\ldots,\varphi_{n-1}\}$. So by the induction hypothesis, there is a proof in $\mathcal{S}^{re}$ for each formula in $D^\prime$. Let $\langle\psi_1,\ldots,\psi_m\rangle$ be the result of gluing these proofs. Now consider a substitution of variables $\sigma$ defined by 
  \[
  \sigma(p)=\left\{
  \begin{array}{ll}
  p&\hbox{if }p\in\var(\varphi_n)\\
  a&\hbox{otherwise}
  \end{array}\right.,
  \]
  where $a$ is some fixed variable in $\varphi_n$ or a 0-ary operator in the logical language if $\var(\varphi_n)=\emptyset$ (there must be at least one such operator if $\var(\varphi_n)=\emptyset$). 
  
  We note that $\sigma$ transforms any instance of an axiom into another instance of the same axiom. Moreover, if for some $t<m$, $\psi_t$ is obtained from $\psi_{t_1},\ldots,\psi_{t_k},1\le t_1,\ldots,t_k<t$ by an application of a rule $\Psi$ in $R_{\mathcal{S}^{re}}$, then $\var(\{\psi_{t_1},\ldots,\psi_{t_k}\})\subseteq\var(\psi_t)$. 
  
  This implies that $\var(\{\sigma(\psi_{t_1}),\ldots,\sigma(\psi_{t_k})\})\subseteq\var(\sigma(\psi_t))$. Hence $\sigma(\psi_t)$ can be obtained by the application of the same rule, $\Psi$, on $\sigma(\psi_{t_1}),\ldots,\sigma(\psi_{t_k})$. Thus $\langle\sigma(\psi_1),\ldots,\sigma(\psi_m)\rangle$ is still a proof in $\mathcal{S}^{re}$. Now $\sigma$ does not change the form of any $\psi_s$ in $\langle\psi_1,\ldots,\psi_m\rangle$, and $\var(\sigma(\psi_s))\subseteq\var(\varphi_n)$ for all $1\le s\le m$. Thus $\Theta$ can be applied on the formulas in $\sigma(D^\prime)$ to obtain $\varphi_n$ and $\langle\sigma(\psi_1),\ldots,\sigma(\psi_m),\varphi_n\rangle$ is a proof of $\varphi_n$ in $\mathcal{S}^{re}$.
  
  Hence by the principle of mathematical induction, any proof of a theorem $\varphi$ in $\mathcal{S}$ can be translated to a proof of it in $\mathcal{S}^{re}$.
\end{proof}

\begin{rem}
Thus, given a Hilbert-style logic $\mathcal{S}$, one can define two companion logics to it, namely, $\mathcal{S}^l$ and $\mathcal{S}^{re}$. Lemma \ref{lem:S-thm_iff_Sl-thm} and Theorem \ref{thm:S-thm_iff_Sr-thm} show that both these companion logics have the same theorems as the logic $\mathcal{S}$. The question that naturally arises here is the following. Are these two companion logics the same? We answer this question below in Theorem \ref{thm:Sr_sub_Sl} and Remark \ref{rem:Sr_neq_Sl}.
\end{rem}

\begin{thm}\label{thm:Sr_sub_Sl}
  Let $\mathcal{S}=\langle\Fmbf,\vdash\rangle$ be a Hilbert-style logic with $A$ and $R_\mathcal{S}$ as its sets of axioms and rules of inference, respectively, and $\mathcal{S}^l=\langle\Fmbf,\lvari\rangle$ be the left variable inclusion companion of $\mathcal{S}$. Let $\mathcal{S}^{re}=\langle\Fmbf,\lrri\rangle$ be the restricted rules companion of $\mathcal{S}$ as described in Definition \ref{dfn:res_rules_comp}.
  
  Then for any $\Sigma\cup\{\varphi\}\subseteq\Fm$, if $\Sigma\lrri\varphi$, then $\Sigma\lvari\varphi$, that is, $\lrri\,\subseteq\,\lvari$.
\end{thm}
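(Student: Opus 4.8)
The plan is to prove the inclusion by unwinding a derivation in $\mathcal{S}^{re}$ and extracting from it a suitable set of premises. Suppose $\Sigma\lrri\varphi$, so there is a proof $D=\langle\varphi_1,\ldots,\varphi_n=\varphi\rangle$ of $\varphi$ from $\Sigma$ using only the axioms in $A$ and the restricted rules in $R_{\mathcal{S}^{re}}$. By Definition~\ref{dfn:left_var_inc}, to conclude $\Sigma\lvari\varphi$ it suffices to produce a set $\Sigma'\subseteq\Sigma$ with $\var(\Sigma')\subseteq\var(\varphi)$ and $\Sigma'\vdash\varphi$. I would prove, by induction on the length $n$ of $D$, the statement: whenever $\varphi$ has a proof from $\Sigma$ in $\mathcal{S}^{re}$, there is such a $\Sigma'$. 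For the base case $n=1$, the formula $\varphi$ is either an axiom in $A$, where $\vdash\varphi$ and $\Sigma'=\emptyset$ works since $\var(\emptyset)=\emptyset\subseteq\var(\varphi)$, or a member of $\Sigma$, where $\Sigma'=\{\varphi\}$ works by reflexivity (C1) as $\var(\varphi)\subseteq\var(\varphi)$.

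For the inductive step, assume the statement for all proofs shorter than $n$. If $\varphi_n$ is an axiom or a member of $\Sigma$, I argue exactly as in the base case. Otherwise $\varphi_n$ is obtained from premises $\Gamma=\{\varphi_{i_1},\ldots,\varphi_{i_k}\}$, with each $i_j<n$, by an application of a rule $\frac{\Gamma}{\varphi_n}\in R_{\mathcal{S}^{re}}$. The crucial point is that, because this rule lies in the \emph{restricted} set $R_{\mathcal{S}^{re}}$, it satisfies $\var(\Gamma)\subseteq\var(\varphi_n)$. Each truncation $\langle\varphi_1,\ldots,\varphi_{i_j}\rangle$ is itself a proof of $\varphi_{i_j}$ from $\Sigma$ in $\mathcal{S}^{re}$ of length less than $n$, so the induction hypothesis supplies $\Sigma'_j\subseteq\Sigma$ with $\Sigma'_j\vdash\varphi_{i_j}$ and $\var(\Sigma'_j)\subseteq\var(\varphi_{i_j})\subseteq\var(\Gamma)\subseteq\var(\varphi_n)$.

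Setting $\Sigma'=\bigcup_{j=1}^{k}\Sigma'_j$, I get $\Sigma'\subseteq\Sigma$ and $\var(\Sigma')\subseteq\var(\varphi_n)$ at once. It then remains only to check $\Sigma'\vdash\varphi_n$. Since each rule of $R_{\mathcal{S}^{re}}$ is an instance of a rule of $\mathcal{S}$, we have $\Gamma\vdash\varphi_n$; and from $\Sigma'_j\vdash\varphi_{i_j}$ together with monotonicity (C3) we obtain $\Sigma'\vdash\varphi_{i_j}$ for each $j$. Transitivity (C2) then yields $\Sigma'\vdash\varphi_n$, completing the induction, and the resulting $\Sigma'$ is exactly the witness required for $\Sigma\lvari\varphi$.

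I expect the only real subtlety to be the variable bookkeeping in the inductive step: the key realization is that the defining restriction $\var(\Gamma)\subseteq\var(\alpha)$ on the rules in $R_{\mathcal{S}^{re}}$ is precisely what propagates the variable-inclusion bound from the conclusion down to the premises, so that the union of the witness sets never escapes $\var(\varphi)$. Everything else is a routine appeal to the Tarskian conditions (C1)--(C3), which are available because $\vdash$ is a consequence relation; in particular no use of the axioms' own variables is needed, since axioms contribute $\Sigma'=\emptyset$.
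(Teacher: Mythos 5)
Your argument is correct: the crucial point, that every rule in $R_{\mathcal{S}^{re}}$ satisfies $\var(\Gamma)\subseteq\var(\alpha)$ and therefore pushes the bound $\var(\cdot)\subseteq\var(\varphi_n)$ from the conclusion back onto the premises and, via the induction hypothesis, onto the witness sets $\Sigma'_j$, is exactly the right one, and the final step from $\Gamma\vdash\varphi_n$ and $\Sigma'\vdash\varphi_{i_j}$ to $\Sigma'\vdash\varphi_n$ is a legitimate use of (C2) and (C3). The paper proves the same inclusion with the same underlying observation but organizes it differently: it first dispatches the easy case $\var(\Sigma)\subseteq\var(\varphi)$, and otherwise fixes the candidate witness in advance as $\Delta=\{\gamma\in\Sigma\mid\var(\gamma)\subseteq\var(\varphi)\}$, prunes a derivation of $\varphi$ from $\Sigma$ in $\mathcal{S}^{re}$ down to a minimal one, and chases the variable-inclusion condition along the chain of rule applications to conclude that every formula in the pruned derivation has its variables inside $\var(\varphi)$, so the derivation already witnesses $\Delta\lrri\varphi$ and hence $\Delta\vdash\varphi$. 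Your structural induction replaces that minimality/pruning argument with an explicit bottom-up assembly of the witness set $\Sigma'$ as a union of witnesses for the premises; this makes the variable bookkeeping and the appeal to the Tarskian conditions fully explicit, whereas the paper's version stays closer to the derivation itself (it exhibits a single derivation from $\Delta$ rather than invoking transitivity of $\vdash$). Both yield the same witness in substance, and your version is, if anything, the more rigorous write-up of the shared idea.
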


\begin{proof}
  Suppose $\Sigma\cup\{\varphi\}\subseteq\Fm$ such that $\Sigma\lrri\varphi$.
  
  If $\var(\Sigma)\subseteq\var(\varphi)$, then clearly, $\Sigma\lvari\varphi$. This case includes the situation where $\Sigma=\emptyset$.
  
  Suppose now that $\Sigma\neq\emptyset$ and $\var(\Sigma)\not\subseteq\var(\varphi)$. Let $\Delta=\{\gamma\in\Sigma\mid\,\var(\gamma)\subseteq\var(\varphi)\}$. We first show that $\Delta\lrri\varphi$.
  
  \textbf{Case 1:} $\varphi$ is a theorem, i.e, $\varphi$ is derived using only the axioms in $A$ and the rules in $R_\mathcal{S}$.
  
  In this case, $\lrri\varphi$. Thus, $\Delta\lrri\varphi$.
  
  \textbf{Case 2:} $\varphi\in\Sigma$.
  
  Then $\varphi\in\Delta$ and hence $\Delta\lrri\varphi$.
  
  \textbf{Case 3:} Now suppose that $\varphi$ is neither a theorem nor an element of $\Sigma$ and that any derivation of $\varphi$ from $\Sigma$ contains some elements of $\Sigma$. Let $D$ be a derivation of $\varphi$ from $\Sigma$. We discard from $D$ any redundancy, i.e. any wff that is not required for obtaining $\varphi$. Let this resulting derivation be $D^\prime=\langle\alpha_1,\ldots,\alpha_n=\varphi\rangle$. Thus $D^\prime$ is a minimal derivation of $\varphi$ in the sense that $D^\prime$ cannot be shortened any further to another derivation of $\varphi$ from $\Sigma$.
  
  Now, for any $\alpha_i\in D^\prime$ that is used by any rule of inference in $R_{\mathcal{S}^{re}}$ to obtain $\alpha_j\in D^\prime$, $\var(\alpha_i)\subseteq\var(\alpha_j)$. If $\alpha_j$ is in turn used by a rule of inference to obtain $\alpha_k\in D^\prime$, then $\var(\alpha_j)\subseteq\var(\alpha_k)$, which implies that $\var(\alpha_i)\subseteq\var(\alpha_k)$.
  
 Since every formula in $D^\prime$ contributes to the derivation of $\varphi$ and $\varphi$ must have been obtained from some formulas in $D^\prime\setminus\{\varphi\}$, $\var(D^\prime)\subseteq\var(\varphi)$. Thus, in particular, any element of $\Sigma$ in $D^\prime$ must be in $\Delta$. So $D^\prime$ is also a derivation of $\varphi$ from $\Delta$. Hence $\Delta\lrri\varphi$.
 
 Thus in all cases, we have $\Delta\lrri\varphi$. Now, the Hilbert-style presentations of $\mathcal{S}$ and $\mathcal{S}^{re}$ have the same set of axioms and any rule in $R_{\mathcal{S}^{re}}$ is an instance of its unrestricted version in $R_\mathcal{S}$. So $\Delta\vdash\varphi$.
 
 Thus we have a subset $\Delta$ of $\Sigma$ with $\var(\Delta)\subseteq\var(\varphi)$ such that $\Delta\vdash\varphi$. Hence $\Sigma\lvari\varphi$.
\end{proof}

\begin{rem}\label{rem:Sr_neq_Sl}
It is, however, not true in general, that for a Hilbert-style logic $\mathcal{S}=\langle\Fmbf,\vdash\rangle$, $\lvari\,\subseteq\,\lrri$, where $\mathcal{S}^l=\langle\Fmbf,\lvari\rangle$ and $\mathcal{S}^{re}=\langle\Fmbf,\lrri\rangle$ are the left variable inclusion and the restricted rules companions of $\mathcal{S}$, respectively. This can be seen from the following minimal example.
  
Suppose $\mathcal{S}=\langle\Fmbf,\vdash\rangle$ is a Hilbert-style logic over the language $\mathcal{L}=\{\land,\lor\}$ with an empty set of axioms and the following two rules of inference.
\[
R_1:\,\dfrac{\alpha\land\beta}{\alpha}\quad\hbox{and}\quad R_2:\,\dfrac{\alpha}{\alpha\lor\beta},\quad\hbox{where }\alpha,\beta\in\Fm
\]
Let $\mathcal{S}^l=\langle\Fmbf,\lvari\rangle$ be the left variable inclusion companion of $\mathcal{S}$. The restricted rules companion of $\mathcal{S}$, $\mathcal{S}^{re}=\langle\Fmbf,\lrri\rangle$, is then the logic induced by the same set of axioms and the following two rules of inference.
\[
\begin{array}{ll}
     R_1^\prime:\,\dfrac{\alpha\land\beta}{\alpha}&\hbox{such that }\var(\alpha\land\beta)\subseteq\var(\alpha),\hbox{ i.e. }\var(\beta)\subseteq\var(\alpha),\hbox{ and}\\
     &\\
     R_2:\,\dfrac{\alpha}{\alpha\lor\beta}.&
\end{array}
\]
($R_2$ does not need to be restricted as  $\var(\alpha)\subseteq\var(\alpha\lor\beta)$ for any $\alpha,\beta\in\Fm$.)

Now, suppose $p,q$ are distinct variables. We consider the following derivation in $\mathcal{S}$.
\[
\begin{array}{lcl}
     p\land q&\vdash&1.\, p\land q\\
     &&2.\, p\qquad [R_1\hbox{ on (1)}]\\
     &&3.\, p\lor q\qquad [R_2\hbox{ on (2)}]
\end{array}
\]
Thus $p\land q\vdash p\lor q$. Since $\var(p\land q)=\var(p\lor q)$, $p\land q\lvari p\lor q$. However, it may be noted that we do not have $p\land q\lvari p$.

On the other hand, $p\land q\not\lrri p\lor q$ since we cannot replace the application of the rule $R_1$ in step 2 of the above derivation by an application of $R_1^\prime$.

Thus $\lvari\,\not\subseteq\,\lrri$, i.e. in this case, $\lrri\,\subsetneq\,\lvari$, and hence $\mathcal{S}^l$ is different from $\mathcal{S}^{re}$.

In fact, the $\land$-$\lor$ fragment of classical propositional logic (the logic of distributive lattices) can also be used in place of the logic $\mathcal{S}$ described above. The minimal example, however, in enough to show that the two companions of a logic can differ, and also how that might happen. We have given more examples to illustrate this inequality between the two companion logics later in the paper (see Remarks \ref{rem:HPRL_SrXSl} and \ref{rem:RML_SrXSl}).
\end{rem}

\begin{rem}
One can justify the above observation in the previous remark by noting that the restricted rules companion of a logic enforces a variable inclusion restriction ``locally'' at each step of a derivation. On the other hand, the left variable inclusion companion of a logic outsources the steps of a derivation to the original logic with the unrestricted rules of inference and only enforces the variable inclusion restriction ``globally'' on the overall entailment. 
\end{rem}

Now, for the following theorem and the subsequent discussions we assume that the logical language $\mathcal{L}$ contains a unary operation $\neg$, that is intended to denote negation. $V,\Fmbf,\Fm$ are as before.

\begin{thm}\label{thm:Sl-paraconsistent}
  Suppose $\mathcal{S}=\langle\Fmbf,\vdash\rangle$ is a logic and that there exists an $\alpha\in\Fm$ such that $\not\vdash\alpha$, and a $p\in V$ such that $p\notin\var(\alpha)$. Then ECQ fails in $\mathcal{S}^l=\langle\Fmbf,\lvari\rangle$, the left variable inclusion companion of $\mathcal{S}$.
\end{thm}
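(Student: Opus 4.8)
The plan is to refute ECQ in $\mathcal{S}^l$ by exhibiting a single concrete counterexample: a contradictory premise set together with a conclusion that $\lvari$ cannot reach. Since ECQ asserts $\{\gamma,\neg\gamma\}\lvari\beta$ for \emph{all} $\gamma$ and $\beta$, it is enough to falsify one instance. The two objects handed to us by the hypothesis are exactly what is needed: a non-theorem $\alpha$ (so $\not\vdash\alpha$) and a variable $p$ that does not occur in $\alpha$. The idea is to manufacture the contradiction out of the ``spare'' variable $p$, so that the left variable inclusion constraint severs all premises from the conclusion $\alpha$.

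Concretely, I would take $\Sigma=\{p,\neg p\}$ as the premise set and $\alpha$ as the target conclusion, and argue that $\{p,\neg p\}\not\lvari\alpha$. First I would record that $\neg$ is unary, so it introduces no new variables; hence $\var(\neg p)=\var(p)=\{p\}$ and therefore $\var(\{p,\neg p\})=\{p\}$. Because $p\notin\var(\alpha)$ by hypothesis, this gives $\var(\alpha)\cap\var(\{p,\neg p\})=\emptyset$. Together with $\not\vdash\alpha$, these are precisely the two hypotheses of Lemma \ref{lem:Sl_cons}, instantiated at $\Sigma=\{p,\neg p\}$ and $\varphi=\alpha$. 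Applying that lemma immediately yields $\{p,\neg p\}\not\lvari\alpha$.

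Since $\{p,\neg p\}$ is a contradiction and $\alpha$ is not derivable from it in $\mathcal{S}^l$, this single instance witnesses that ECQ fails in $\mathcal{S}^l$, completing the argument. The proof is essentially a direct invocation of Lemma \ref{lem:Sl_cons}, so there is no serious obstacle; the only genuine content is the choice of counterexample, namely building the contradiction from a variable disjoint from $\var(\alpha)$. One could alternatively avoid citing the lemma and argue by hand: the only subset $\Gamma'\subseteq\{p,\neg p\}$ with $\var(\Gamma')\subseteq\var(\alpha)$ is $\Gamma'=\emptyset$ (any nonempty subset has variable set $\{p\}\not\subseteq\var(\alpha)$), and $\emptyset\vdash\alpha$ would contradict $\not\vdash\alpha$; but routing through Lemma \ref{lem:Sl_cons} is cleaner. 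The point worth emphasizing is that it is the separation of the variable stock of the premises from that of the conclusion that disables explosion, which is exactly the mechanism by which left variable inclusion induces paraconsistency.
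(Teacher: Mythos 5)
Your proposal is correct and is essentially identical to the paper's proof: both take the premise set $\{p,\neg p\}$ built from the spare variable $p$, observe that $\var(\{p,\neg p\})=\{p\}$ is disjoint from $\var(\alpha)$, and invoke Lemma \ref{lem:Sl_cons} together with $\not\vdash\alpha$ to conclude $\{p,\neg p\}\not\lvari\alpha$. Your added remark unpacking the lemma by hand (the only admissible $\Gamma'$ is $\emptyset$) is a fine elaboration but not a different argument.
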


\begin{proof}
  It follows from Lemma \ref{lem:Sl_cons} that $\{p,\neg p\}\not\lvari\alpha$, since $\{p\}\cap\var(\alpha)=\emptyset$.
\end{proof}

\begin{rem}
For other deductive failures in logics of left variable inclusion, such as $p\land q\not\lvari p$, where $p,q$ are distinct variables, one can see the discussion in \cite{CiuniCarrara2016} for the case of PWK, the left variable inclusion companion of classical propositional logic.
\end{rem}

\begin{cor}\label{cor:Sr-paraconsistent}
Suppose $\mathcal{S}=\langle\Fmbf,\vdash\rangle$ is a Hilbert-style logic and that there exists an $\alpha\in\Fm$ such that $\not\vdash\alpha$, and a $p\in V$ such that $p\notin\var(\alpha)$. Then ECQ fails in $\mathcal{S}^{re}=\langle\Fmbf,\lrri\rangle$, the restricted rules companion of $\mathcal{S}$.
\end{cor}

\begin{proof}
  This follows from the fact that $\lrri\,\subseteq\,\lvari$ proved in Theorem \ref{thm:Sr_sub_Sl}.
\end{proof}

\begin{rem}
The above theorem shows that the left variable inclusion companion of any non-trivial logic with a sufficient supply of variables, is, at least, weakly paraconsistent. This failure of ECQ is regardless of whether the original logic is explosive or not.

We also note that due to Lemma \ref{lem:S-thm_iff_Sl-thm}, LNC holds in the logic $\mathcal{S}$ iff it holds in $\mathcal{S}^l$. Now, suppose $\mathcal{S}$ is a logic where ECQ holds but LNC fails. Then as discussed in Section \ref{sec:introduction}, $\mathcal{S}$ is not paraconsistent (an example of such a logic is $L_3$). However, the left variable inclusion companion of $\mathcal{S}$ will be both non-explosive and without LNC due to Lemma \ref{lem:S-thm_iff_Sl-thm} and Theorem  \ref{thm:Sl-paraconsistent}, i.e. ECQ and LNC will both fail in $\mathcal{S}^l$. 

Similar remarks can be made about the restricted rules companion of a Hilbert-style logic. We thus have the following theorem.
\end{rem}

\begin{thm}
  Suppose $\mathcal{S}$ is a logic where LNC fails. Then $\mathcal{S}^l$ is strongly paraconsistent. Moreover, if $\mathcal{S}$ is a Hilbert-style logic, then the logic $\mathcal{S}^{re}$ will also be strongly paraconsistent.
\end{thm}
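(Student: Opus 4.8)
The plan is to establish strong paraconsistency, which by the terminology fixed in Subsection~\ref{subsec:paraconsistency} means exhibiting both the failure of ECQ and the failure of LNC. The failure of ECQ is essentially free: if $\mathcal{S}$ is a logic in which LNC fails, then there is some formula $\alpha$ with $\not\vdash\neg(\alpha\land\neg\alpha)$, so in particular $\mathcal{S}$ is nontrivial (it has a non-theorem). Taking any variable $p\in V$ not occurring in that non-theorem---which exists since $V$ is countably infinite while each formula has only finitely many variables---Theorem~\ref{thm:Sl-paraconsistent} immediately yields that ECQ fails in $\mathcal{S}^l$. For the Hilbert-style case, Corollary~\ref{cor:Sr-paraconsistent} gives the same conclusion for $\mathcal{S}^{re}$. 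So the only real content lies in transferring the failure of LNC from $\mathcal{S}$ to its companions.

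First I would handle the left variable inclusion companion. The key observation is Lemma~\ref{lem:S-thm_iff_Sl-thm}, which states that $\mathcal{S}$ and $\mathcal{S}^l$ have exactly the same theorems: for any formula $\varphi$, $\vdash\varphi$ iff $\lvari\varphi$. Since LNC is the assertion that $\neg(\alpha\land\neg\alpha)$ is a theorem for every $\alpha$, the failure of LNC in $\mathcal{S}$ means there is an $\alpha$ with $\not\vdash\neg(\alpha\land\neg\alpha)$; by Lemma~\ref{lem:S-thm_iff_Sl-thm} this is equivalent to $\not\lvari\neg(\alpha\land\neg\alpha)$, i.e.\ LNC fails in $\mathcal{S}^l$ as well. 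Combined with the failure of ECQ above, $\mathcal{S}^l$ is strongly paraconsistent.

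Next I would treat the restricted rules companion under the added hypothesis that $\mathcal{S}$ is a Hilbert-style logic. Here the analogous theorem-preservation result is Theorem~\ref{thm:S-thm_iff_Sr-thm}, which gives $\lrri\varphi$ iff $\vdash\varphi$ for every formula $\varphi$. Exactly as before, the witness $\alpha$ to the failure of LNC in $\mathcal{S}$ satisfies $\not\vdash\neg(\alpha\land\neg\alpha)$, hence $\not\lrri\neg(\alpha\land\neg\alpha)$, so LNC fails in $\mathcal{S}^{re}$; together with the ECQ failure from Corollary~\ref{cor:Sr-paraconsistent}, $\mathcal{S}^{re}$ is strongly paraconsistent.

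There is no genuine obstacle here---the proof is a direct assembly of the two theorem-preservation lemmas (\ref{lem:S-thm_iff_Sl-thm} and \ref{thm:S-thm_iff_Sr-thm}) with the two paraconsistency results (Theorem~\ref{thm:Sl-paraconsistent} and Corollary~\ref{cor:Sr-paraconsistent}). The one point deserving a word of care is checking that the hypotheses of Theorem~\ref{thm:Sl-paraconsistent} are actually met: that requires a non-theorem of $\mathcal{S}$ and a variable absent from it. The failure of LNC supplies the non-theorem $\neg(\alpha\land\neg\alpha)$ directly, and the countable infinitude of $V$ against the finiteness of $\var\big(\neg(\alpha\land\neg\alpha)\big)$ supplies the missing variable, so both hypotheses are automatically discharged. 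I expect the whole argument to be only a few lines.
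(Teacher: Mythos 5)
Your proof is correct and follows essentially the same route as the paper's: ECQ failure via Theorem~\ref{thm:Sl-paraconsistent} and Corollary~\ref{cor:Sr-paraconsistent}, and LNC failure transferred through the theorem-preservation results (Lemma~\ref{lem:S-thm_iff_Sl-thm} and Theorem~\ref{thm:S-thm_iff_Sr-thm}). Your explicit check that the failure of LNC supplies the non-theorem and fresh variable needed to invoke Theorem~\ref{thm:Sl-paraconsistent} is a small but welcome addition of care that the paper leaves implicit.
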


\begin{proof}
  ECQ fails in $\mathcal{S}^l$ by Theorem \ref{thm:Sl-paraconsistent} and in $\mathcal{S}^{re}$ by Corollary \ref{cor:Sr-paraconsistent}. Since LNC fails in $\mathcal{S}$, it fails in $\mathcal{S}^l$ and $\mathcal{S}^{re}$ as well, by the above remark. Thus $\mathcal{S}^l$ and $\mathcal{S}^{re}$ are strongly paraconsistent.
\end{proof}

\begin{rem}\label{rem:Sl&Sr_paracons}
We can summarize the above results as follows.
\begin{center}
    \begin{tabular}{|c|c|}
         \hline
         $\mathcal{S}$&$\mathcal{S}^l,\,\mathcal{S}^{re}$\\
         \hline
         Not paraconsistent, LNC holds&Weakly paraconsistent\\
         Not paraconsistent, LNC fails&Strongly paraconsistent\\
         Weakly paraconsistent&Weakly paraconsistent\\
         Strongly paraconsistent&Strongly paraconsistent\\
         \hline
    \end{tabular}
\end{center}
\end{rem}

\section{Deduction theorem and a sufficient condition for \texorpdfstring{$\lvari\,=\,\lrri$}{LeftVarIncl=ResRulComp}}\label{sec:suffcond-Sr=Sl}

The first part of this section is concerned with the Deduction theorem and its converse in left variable inclusion companion logics. These are important meta-theorems that can be proved for classical, intuitionistic and many other non-classical logics, including some paraconsistent logics. We assume that the logical language $\mathcal{L}$ contains a binary operation $\limp$, that is intended to denote implication. $V,\Fmbf,\Fm$ are as before. Then, given a logic $\mathcal{S}=\langle\Fmbf,\vdash\rangle$, the Deduction theorem and its converse (if they hold in $\mathcal{S}$) together assert that, for any $\Sigma\cup\{\alpha,\beta\}\subseteq\Fm$, $\Sigma\cup\{\alpha\}\vdash\beta$ iff $\Sigma\vdash\alpha\limp\beta$.

\begin{thm}\label{thm:Ded_Sl}
  Suppose $\mathcal{S}=\langle\Fmbf,\vdash\rangle$ is a logic where the Deduction theorem holds. Then the Deduction theorem also holds in its left variable inclusion companion $\mathcal{S}^l=\langle\Fmbf,\lvari\rangle$, i.e. for any $\Sigma\cup\{\alpha,\beta\}\subseteq\Fm$, if $\Sigma\cup\{\alpha\}\lvari\beta$ then $\Sigma\lvari\alpha\limp\beta$.
\end{thm}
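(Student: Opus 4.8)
The plan is to unwind both sides through the definition of $\lvari$ and reduce everything to a single application of the Deduction theorem already available in $\mathcal{S}$. Recall that $\Gamma\lvari\varphi$ means there is some $\Gamma'\subseteq\Gamma$ with $\var(\Gamma')\subseteq\var(\varphi)$ and $\Gamma'\vdash\varphi$. So the hypothesis $\Sigma\cup\{\alpha\}\lvari\beta$ hands us a witnessing set $\Gamma'\subseteq\Sigma\cup\{\alpha\}$ with $\var(\Gamma')\subseteq\var(\beta)$ and $\Gamma'\vdash\beta$, while the goal $\Sigma\lvari\alpha\limp\beta$ asks us to produce a set $\Delta\subseteq\Sigma$ with $\var(\Delta)\subseteq\var(\alpha\limp\beta)=\var(\alpha)\cup\var(\beta)$ and $\Delta\vdash\alpha\limp\beta$.

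First I would set $\Delta:=\Gamma'\setminus\{\alpha\}$, which is plainly a subset of $\Sigma$. Since $\Gamma'\subseteq\Delta\cup\{\alpha\}$, monotonicity (C3) upgrades $\Gamma'\vdash\beta$ to $\Delta\cup\{\alpha\}\vdash\beta$. Now I would invoke the Deduction theorem of $\mathcal{S}$ on $\Delta\cup\{\alpha\}\vdash\beta$ to obtain $\Delta\vdash\alpha\limp\beta$, which is exactly the derivability half of what the witness $\Delta$ must satisfy. Note that no case split on whether $\alpha\in\Gamma'$ is needed: deleting $\alpha$ from $\Gamma'$ and then reintroducing it on the left handles both possibilities uniformly.

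It remains to verify the variable condition, and this is really bookkeeping rather than a genuine obstacle. Because $\Delta\subseteq\Gamma'$ we have $\var(\Delta)\subseteq\var(\Gamma')\subseteq\var(\beta)$; and since $\var(\beta)\subseteq\var(\alpha)\cup\var(\beta)=\var(\alpha\limp\beta)$, chaining the inclusions yields $\var(\Delta)\subseteq\var(\alpha\limp\beta)$. Hence $\Delta$ witnesses $\Sigma\lvari\alpha\limp\beta$. The only feature to watch is precisely this variable inclusion, and the key observation that makes it go through effortlessly is that the new conclusion $\alpha\limp\beta$ contains at least the variables of $\beta$, so the constraint $\var(\Gamma')\subseteq\var(\beta)$ we begin with is strictly stronger than the one required for $\alpha\limp\beta$. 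The whole statement therefore collapses to one application of the Deduction theorem in $\mathcal{S}$ together with monotonicity.
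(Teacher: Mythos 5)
Your proof is correct and takes essentially the same route as the paper's: extract the witnessing subset, pass to $\Delta\cup\{\alpha\}\vdash\beta$ by monotonicity, apply the Deduction theorem of $\mathcal{S}$, and check the variable inclusion via $\var(\Delta)\subseteq\var(\beta)\subseteq\var(\alpha\limp\beta)$. The only difference is cosmetic: the paper splits into cases according to whether $\alpha$ belongs to the witnessing set, whereas your choice $\Delta=\Gamma'\setminus\{\alpha\}$ handles both cases uniformly.
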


\begin{proof}
  Suppose $\Sigma\cup\{\alpha,\beta\}\subseteq\Fm$ and $\Sigma\cup\{\alpha\}\lvari\beta$.
  
  Then there exists a $\Delta\subseteq\Sigma\cup\{\alpha\}$ such that $\var(\Delta)\subseteq\var(\beta)$ and $\Delta\vdash\beta$. The two possible cases that arise from here are as follows.
  
  \textbf{Case 1}: $\alpha\notin\Delta$
  
  In this case, $\Delta\subseteq\Sigma$. Now since $\Delta\vdash\beta$, by monotonicity, $\Delta\cup\{\alpha\}\vdash\beta$. Then by the Deduction theorem in $\mathcal{S}$, $\Delta\vdash\alpha\limp\beta$.
  
  Finally, since $\var(\Delta)\subseteq\var(\beta)$, $\var(\Delta)\subseteq\var(\alpha\limp\beta)=\var(\alpha)\cup\var(\beta)$.
  
  Thus $\Delta\subseteq\Sigma$ such that $\var(\Delta)\subseteq\var(\alpha\limp\beta)$ and $\Delta\vdash\alpha\limp\beta$. Hence $\Sigma\lvari\alpha\limp\beta$.
  
  \textbf{Case 2}: $\alpha\in\Delta$
  
  In this case, there exists $\Delta^\prime\subseteq\Sigma$ such that $\Delta=\Delta^\prime\cup\{\alpha\}$. 
  
  Also, $\var(\Delta^\prime)\subseteq\var(\Delta)\subseteq\var(\beta)\subseteq\var(\alpha\limp\beta)=\var(\alpha)\cup\var(\beta)$.
  
  Now, $\Delta=\Delta^\prime\cup\{\alpha\}\vdash\beta$ implies $\Delta^\prime\vdash\alpha\limp\beta$ by the Deduction theorem in $\mathcal{S}$.
  
  Thus we have $\Delta^\prime\subseteq\Sigma$ such that $\var(\Delta^\prime)\subseteq\var(\alpha\limp\beta)$ and $\Delta^\prime\vdash\alpha\limp\beta$. Hence $\Sigma\lvari\alpha\limp\beta$.
\end{proof}

Although the Deduction theorem passes through to the left variable inclusion companion unscathed, the same is not the case for its converse as shown in the following theorem.

\begin{thm}\label{thm:Det_Sl}
  Suppose $\mathcal{S}=\langle\Fmbf,\vdash\rangle$ is a logic where the converse of the Deduction theorem holds. Then the following restricted version of the converse of the Deduction theorem holds in $\mathcal{S}^l=\langle\Fmbf,\lvari\rangle$. For any $\Sigma\cup\{\alpha,\beta\}\subseteq\Fm$, $\Sigma\lvari\alpha\limp\beta$ implies $\Sigma\cup\{\alpha\}\lvari\beta$ iff $\var(\alpha)\subseteq\var(\beta)$.
\end{thm}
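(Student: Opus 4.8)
The statement to be proved is a biconditional in which the variable-inclusion condition $\var(\alpha)\subseteq\var(\beta)$ sits on one side and, on the other, the assertion that the converse-deduction implication ``$\Sigma\lvari\alpha\limp\beta$ implies $\Sigma\cup\{\alpha\}\lvari\beta$'' holds for every $\Sigma$. So the plan is to split into a \emph{sufficiency} direction (the variable condition guarantees the restricted converse) and a \emph{necessity} direction (the restricted converse forces the variable condition), the latter argued contrapositively by exhibiting a single $\Sigma$ that breaks the implication once the variable condition fails.

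For sufficiency I would argue exactly as in the proof of Theorem \ref{thm:Ded_Sl}, only running the deduction step in the opposite direction. Assuming $\var(\alpha)\subseteq\var(\beta)$ and $\Sigma\lvari\alpha\limp\beta$, I unfold the definition of $\lvari$ to obtain some $\Sigma^\prime\subseteq\Sigma$ with $\var(\Sigma^\prime)\subseteq\var(\alpha\limp\beta)$ and $\Sigma^\prime\vdash\alpha\limp\beta$. The key simplification is that the hypothesis $\var(\alpha)\subseteq\var(\beta)$ collapses $\var(\alpha\limp\beta)=\var(\alpha)\cup\var(\beta)$ to just $\var(\beta)$, so $\var(\Sigma^\prime)\subseteq\var(\beta)$. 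Applying the converse of the Deduction theorem in $\mathcal{S}$ yields $\Sigma^\prime\cup\{\alpha\}\vdash\beta$, and since $\var(\Sigma^\prime\cup\{\alpha\})=\var(\Sigma^\prime)\cup\var(\alpha)\subseteq\var(\beta)$, the subset $\Sigma^\prime\cup\{\alpha\}$ of $\Sigma\cup\{\alpha\}$ witnesses $\Sigma\cup\{\alpha\}\lvari\beta$ directly from the definition of $\lvari$.

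For necessity I would prove the contrapositive. Assuming $\var(\alpha)\not\subseteq\var(\beta)$, I fix a variable $p\in\var(\alpha)\setminus\var(\beta)$ and take $\Sigma=\{\alpha\limp\beta\}$. Reflexivity immediately gives $\{\alpha\limp\beta\}\lvari\alpha\limp\beta$, so the hypothesis of the implication is met. It then remains to show $\{\alpha\limp\beta,\alpha\}\not\lvari\beta$. The point is that every nonempty subset of $\{\alpha\limp\beta,\alpha\}$ carries $p$ among its variables (both $\alpha$ and $\alpha\limp\beta$ do), whereas $p\notin\var(\beta)$; hence the only subset $\Gamma^\prime$ satisfying $\var(\Gamma^\prime)\subseteq\var(\beta)$ is $\Gamma^\prime=\emptyset$, so $\{\alpha\limp\beta,\alpha\}\lvari\beta$ can hold only if $\emptyset\vdash\beta$, i.e. only if $\beta$ is a theorem of $\mathcal{S}$.

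This last observation is precisely where I expect the main obstacle to lie. The counterexample defeats the implication only when $\not\vdash\beta$: if $\beta$ is a theorem then, by Lemma \ref{lem:S-thm_iff_Sl-thm} together with monotonicity of $\lvari$, one has $\Sigma\cup\{\alpha\}\lvari\beta$ for \emph{every} $\Sigma$, the implication is trivially satisfied, and no $\Sigma$ can witness its failure. Thus the necessity direction genuinely requires the side condition that $\beta$ is not a theorem (equivalently, one must read the statement with this case set aside), and I would make this explicit rather than gloss over it. The remaining work is routine: the variable bookkeeping of the two preceding paragraphs, and care in stating exactly which pairs $\alpha,\beta$ the equivalence is being claimed for.
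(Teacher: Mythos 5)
Your sufficiency half is exactly the paper's argument: extract a witness $\Sigma^\prime\subseteq\Sigma$ with $\var(\Sigma^\prime)\subseteq\var(\alpha\limp\beta)=\var(\beta)$, apply the converse of the Deduction theorem in $\mathcal{S}$, and observe that $\Sigma^\prime\cup\{\alpha\}$ witnesses $\Sigma\cup\{\alpha\}\lvari\beta$. For necessity you take a genuinely different route, and a sounder one. The paper keeps its single witness $\Delta$ and asserts ``$\Sigma\cup\{\alpha\}\lvari\beta$ iff $\var(\Delta\cup\{\alpha\})\subseteq\var(\beta)$''; but $\lvari$ is an existential over subsets, so the failure of that one witness does not rule out another, and the right-to-left half of the paper's conclusion is not actually established. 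You instead produce the concrete $\Sigma=\{\alpha\limp\beta\}$ and show every nonempty subset of $\{\alpha\limp\beta,\alpha\}$ carries a variable of $\alpha$ outside $\var(\beta)$, so only $\emptyset$ can serve as a witness. This is correct, and it surfaces the real issue: when $\vdash\beta$, the empty witness succeeds, $\Sigma\cup\{\alpha\}\lvari\beta$ holds for every $\Sigma$ by Lemma \ref{lem:S-thm_iff_Sl-thm} and monotonicity, and the stated biconditional fails right-to-left. Your side condition $\not\vdash\beta$ is therefore genuinely required; the caveat you flag is an imprecision in the theorem and the paper's own proof, not a gap in yours.
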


\begin{proof}
  Suppose $\Sigma\cup\{\alpha,\beta\}\subseteq\Fm$ such that $\Sigma\lvari\alpha\limp\beta$ and $\var(\alpha)\subseteq\var(\beta)$. 
  Then there exists a $\Delta\subseteq\Sigma$ such that $\var(\Delta)\subseteq\var(\alpha\limp\beta)=\var(\alpha)\cup\var(\beta)$ and $\Delta\vdash\alpha\limp\beta$.
  
  So by the converse of the Deduction theorem in $\mathcal{S}$, we have $\Delta\cup\{\alpha\}\vdash\beta$. Also, $\Delta\cup\{\alpha\}\subseteq\Sigma\cup\{\alpha\}$.
  
  Thus we have $\Delta\cup\{\alpha\}\subseteq\Sigma\cup\{\alpha\}$ such that $\Delta\cup\{\alpha\}\vdash\beta$. So $\Sigma\cup\{\alpha\}\lvari\beta$ iff $\var(\Delta\cup\{\alpha\})\subseteq\var(\beta)$.
  
  Now, given that $\var(\Delta)\subseteq\var(\alpha)\cup\var(\beta)$, it follows immediately from $\var(\alpha)\subseteq\var(\beta)$ that $\var(\Delta\cup\{\alpha\})\subseteq\var(\beta)$. Conversely, if $\var(\Delta\cup\{\alpha\})\subseteq\var(\beta)$, then $\var(\alpha)\subseteq\var(\beta)$.
  
  Hence $\Sigma\lvari\alpha\limp\beta$ implies $\Sigma\cup\{\alpha\}\lvari\beta$ iff $\var(\alpha)\subseteq\var(\beta)$.
\end{proof}

We have seen earlier, in Remark \ref{rem:Sr_neq_Sl}, that the restricted rules companion of a Hilbert-style logic does not always coincide with its left variable inclusion companion, but is always contained in it (Theorem \ref{thm:Sr_sub_Sl}). On the other hand, PWK is an example of a logic that is both the restricted rules companion and the left variable inclusion companion of CPC. Thus an interesting question to investigate is as follows. What are the necessary and sufficient conditions for the two companions of a logic to coincide? While a necessary condition is still under investigation, we present a sufficient condition in the next theorem.

\begin{thm}\label{thm:deduction_thm-Sr=Sl}
  Suppose $\mathcal{S}=\langle\Fmbf,\vdash\rangle$ is a finitary Hilbert-style logic such that $\dfrac{\alpha,\alpha\limp\beta}{\beta}$ (modus ponens [MP]) is a rule of inference in $\mathcal{S}$. Suppose further that the Deduction theorem holds in $\mathcal{S}$. Then the restricted rules companion of $\mathcal{S}$ coincides with the left variable inclusion companion of $\mathcal{S}$, i.e. $\mathcal{S}^{re}=\langle\Fmbf,\lrri\rangle=\langle\Fmbf,\lvari\rangle=\mathcal{S}^l$.
\end{thm}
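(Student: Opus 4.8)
The plan is to lean on Theorem \ref{thm:Sr_sub_Sl}, which already gives $\lrri\,\subseteq\,\lvari$, so that the only thing left to establish is the reverse inclusion $\lvari\,\subseteq\,\lrri$. Accordingly, suppose $\Sigma\lvari\varphi$. By Definition \ref{dfn:left_var_inc} there is a $\Gamma^\prime\subseteq\Sigma$ with $\var(\Gamma^\prime)\subseteq\var(\varphi)$ and $\Gamma^\prime\vdash\varphi$. Since $\mathcal{S}$ is finitary, I would first refine $\Gamma^\prime$ to a finite set $\{\gamma_1,\ldots,\gamma_k\}\subseteq\Gamma^\prime$ that still proves $\varphi$ in $\mathcal{S}$; note that $\var(\{\gamma_1,\ldots,\gamma_k\})\subseteq\var(\Gamma^\prime)\subseteq\var(\varphi)$ is preserved. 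Because $\lrri$ is the consequence relation of a logic and hence monotone (C3), and $\{\gamma_1,\ldots,\gamma_k\}\subseteq\Sigma$, it suffices to prove $\{\gamma_1,\ldots,\gamma_k\}\lrri\varphi$.

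Next I would use the Deduction theorem in $\mathcal{S}$ repeatedly to discharge the hypotheses $\gamma_1,\ldots,\gamma_k$, obtaining
\[
\vdash\gamma_1\limp(\gamma_2\limp(\cdots\limp(\gamma_k\limp\varphi)\cdots)).
\]
Call this theorem $\psi_0$, and more generally set $\psi_i=\gamma_{i+1}\limp(\cdots\limp(\gamma_k\limp\varphi)\cdots)$ for $0\le i\le k$, so that $\psi_k=\varphi$ and $\psi_{i-1}=\gamma_i\limp\psi_i$. By Theorem \ref{thm:S-thm_iff_Sr-thm}, since $\psi_0$ is a theorem of $\mathcal{S}$ it is also a theorem of $\mathcal{S}^{re}$, i.e. $\lrri\psi_0$.

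The heart of the argument is then to peel off the hypotheses one at a time using the restricted modus ponens rule of $\mathcal{S}^{re}$. Having derived $\psi_{i-1}$ in $\mathcal{S}^{re}$, I apply MP with minor premise $\gamma_i$ (available as a member of $\Sigma$) and major premise $\psi_{i-1}=\gamma_i\limp\psi_i$ to obtain $\psi_i$. The crucial check is that this instance of MP actually lies in $R_{\mathcal{S}^{re}}$, i.e. that $\var(\{\gamma_i,\gamma_i\limp\psi_i\})\subseteq\var(\psi_i)$. Since $\var(\gamma_i\limp\psi_i)=\var(\gamma_i)\cup\var(\psi_i)$, this reduces to $\var(\gamma_i)\subseteq\var(\psi_i)$, and here the left variable inclusion hypothesis pays off: $\var(\gamma_i)\subseteq\var(\varphi)$, while $\varphi$ occurs as the innermost consequent of $\psi_i$, so $\var(\varphi)\subseteq\var(\psi_i)$; hence $\var(\gamma_i)\subseteq\var(\psi_i)$ as required. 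Iterating for $i=1,\ldots,k$ yields a derivation of $\psi_k=\varphi$ from $\{\gamma_1,\ldots,\gamma_k\}$ in $\mathcal{S}^{re}$ (when $k=0$ this is just $\lrri\varphi$ with no MP steps), and monotonicity gives $\Sigma\lrri\varphi$.

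I expect the main obstacle to be precisely the bookkeeping in the last paragraph: confirming that each of the $k$ applications of modus ponens respects the variable restriction. The chain of containments $\var(\gamma_i)\subseteq\var(\varphi)\subseteq\var(\psi_i)$ is what keeps the restricted rule applicable at every step, even though an unrestricted MP could in general introduce or drop variables. This is exactly the reason that finitariness of $\mathcal{S}$ (to bound the number of premises) together with the Deduction theorem (to package them into a single theorem) suffice to simulate the ``global'' variable-inclusion entailment of $\mathcal{S}^l$ by a sequence of ``local'' restricted steps in $\mathcal{S}^{re}$.
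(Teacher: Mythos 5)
Your proof is correct and follows essentially the same route as the paper's: invoke Theorem \ref{thm:Sr_sub_Sl} for one inclusion, then for the other use finitariness to get a finite $\Delta$, the Deduction theorem to package it into a theorem, Theorem \ref{thm:S-thm_iff_Sr-thm} to import that theorem into $\mathcal{S}^{re}$, and repeated restricted modus ponens to peel off the hypotheses. Your explicit verification that each MP instance satisfies the variable restriction (via $\var(\gamma_i)\subseteq\var(\varphi)\subseteq\var(\psi_i)$) is exactly the step the paper leaves implicit.
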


\begin{proof}
  Let $\Sigma\cup\{\alpha\}\subseteq\Fm$ such that $\Sigma\lvari\alpha$. Then by Definition \ref{dfn:left_var_inc}, there exists $\Delta\subseteq\Sigma$ such that $\var(\Delta)\subseteq\var(\alpha)$ and $\Delta\vdash\alpha$. Now, since $\mathcal{S}$ is finitary, we can find such a $\Delta$ that is finite. Thus $\Delta$ can be assumed to be finite, without loss of generality. Let $\Delta=\{\varphi_1,\ldots,\varphi_n\}$. Then, using the Deduction theorem in $\mathcal{S}$ $n$ times, we have $\vdash(\varphi_1\limp(\ldots\limp(\varphi_n\limp\alpha)\ldots))$. So by Theorem \ref{thm:S-thm_iff_Sr-thm}, $\lrri(\varphi_1\limp(\ldots\limp(\varphi_n\limp\alpha)\ldots))$. 
  
  Now, since MP is a rule of inference in $\mathcal{S}$, the restricted version of it, that is, $\dfrac{\alpha,\alpha\limp\beta}{\beta}$, provided $\var(\alpha)\subseteq\var(\beta)$, is a rule of inference in $\mathcal{S}^{re}$. Since $\var(\Delta)\subseteq\var(\alpha)$, we have $\Delta=\{\varphi_1,\ldots,\varphi_n\}\lrri\alpha$,
  by applying the restricted modus ponens $n$ times. Thus $\Sigma\lrri\alpha$. Hence $\lvari\,\subseteq\,\lrri$. Since $\lrri\,\subseteq\,\lvari$, by Theorem \ref{thm:Sr_sub_Sl}, this implies that $\lrri\,=\,\lvari$, and thus $\mathcal{S}^{re}=\mathcal{S}^l$.
\end{proof}

\begin{rem}
The fact that $\mathrm{CPC}^l=\mathrm{PWK}=\mathrm{CPC}^{re}$ can now be seen as a corollary of the above theorem.
\end{rem}

We now turn to a couple more examples of left variable inclusion and restricted rules companions of logics. For one of the examples, the two companion logics coincide, while they are different in case of the other one.

\subsection{Intuitionistic paraconsistent weak Kleene logic (IPWK)}\label{subsec:ipwk}

Our first example deals with the left variable inclusion and restricted rules companion logics of intuitionistic propositional logic (IPC).

\begin{thm}\label{thm:IPWK=HIPWK}
  The left variable inclusion companion of IPC coincides with its restricted rules companion, that is, a Hilbert-style presentation for the left variable inclusion companion of IPC can be obtained by keeping the same set of axioms and restricting the rules of inference.
\end{thm}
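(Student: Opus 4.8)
The plan is to obtain this as a direct instance of the sufficient condition established in Theorem \ref{thm:deduction_thm-Sr=Sl}. That theorem guarantees $\lrri\,=\,\lvari$ for any finitary Hilbert-style logic in which modus ponens is a rule of inference and the Deduction theorem holds. Hence the entire task reduces to fixing a suitable Hilbert-style presentation of IPC and checking that these three hypotheses are met; no fresh combinatorial work on derivations is needed, since all of that was already carried out in the proof of Theorem \ref{thm:deduction_thm-Sr=Sl}.

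Concretely, I would first commit to a standard Hilbert-style presentation of IPC --- for instance, the usual axiom schemata governing $\limp$, $\land$, $\lor$, and negation, together with modus ponens $\dfrac{\alpha,\alpha\limp\beta}{\beta}$ as the \emph{only} rule of inference. Under this presentation the three hypotheses are immediate. Derivations are by definition finite sequences of formulas, so the induced consequence relation $\inyields$ is finitary; MP is a rule of inference by construction; and the Deduction theorem for IPC is a classical, well-known meta-theorem. It is worth recording what restricting MP amounts to here: since $\var(\alpha\limp\beta)=\var(\alpha)\cup\var(\beta)$, the side condition $\var(\{\alpha,\alpha\limp\beta\})\subseteq\var(\beta)$ collapses to $\var(\alpha)\subseteq\var(\beta)$, which is exactly the restricted modus ponens used inside the proof of Theorem \ref{thm:deduction_thm-Sr=Sl}. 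Invoking that theorem then yields $\langle\Fmbf,\lrri\rangle=\langle\Fmbf,\lvari\rangle$ at once.

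I do not expect a substantial obstacle, precisely because the statement is a specialization of the general sufficient condition. The one point demanding care is that the restricted rules companion depends on the chosen presentation (cf.\ Remark \ref{rem:Sr_neq_Sl}), so one must commit at the outset to a presentation of IPC in which MP is the sole rule of inference; the theorem is to be read as asserting the \emph{existence} of such a presentation realizing the left variable inclusion companion, not that every axiomatization does so. With that understood, Theorem \ref{thm:deduction_thm-Sr=Sl} applies verbatim, the two companions coincide, and this common logic is what we name IPWK.
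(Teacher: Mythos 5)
Your proposal is correct and follows essentially the same route as the paper: the paper also derives this theorem as an immediate corollary of Theorem \ref{thm:deduction_thm-Sr=Sl}, citing a standard Hilbert-style presentation of IPC with MP as a rule and the classical Deduction theorem for IPC. Your additional remarks on finitarity and on fixing the presentation are sound elaborations of the same argument.
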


\begin{proof}
  It is well known (one can see, for example, \cite[Volume I, Chapter 2]{VDT}) that MP is a rule of inference in a Hilbert-style presentation of IPC. The Deduction theorem for IPC can be proved in much the same way as in classical propositional logic (CPC).
  
  This theorem is then a corollary of Theorem \ref{thm:deduction_thm-Sr=Sl}.
\end{proof}

Alternatively, a direct proof of the above result is possible following the technique used in \cite{Bonzio2017} to show that PWK, the left variable inclusion companion of CPC, is also its restricted rules companion. This is detailed below.

Let $\mathcal{L}=\{\land,\lor,\limp,\neg,0,1\}$ be a logical language, where the arities of the operators $\land,\lor,\limp,\neg,0,1$ are $2,2,2,1,0$, and 0, respectively. Suppose $V$ is a countable set of propositional variables and let $\Fmbf$ denote the formula algebra over $V$ of type $\mathcal{L}$. 

Now, any Heyting algebra can be seen as an algebra of the above type $\mathcal{L}$. It is well known that IPC is sound and complete with respect to valuations in Heyting algebras \cite[Volume II, Chapter 13, \S 5]{VDT}. IPC can be described syntactically as the logic $\left\langle\Fmbf,\inyields\right\rangle$, where $\Fmbf$ is the formula algebra of type $\mathcal{L}$ and $\inyields$ is a substitution-invariant (syntactic) consequence relation on $\Fm$. On the other hand, one can consider the class of $\mathcal{L}$-matrices:
\[
\mathcal{H}=\{\langle\mathbf{H},\{1\}\rangle\mid\,\mathbf{H}\hbox{ is a Heyting algebra}\}
\]
and describe IPC semantically as the logic $\left\langle\Fmbf,\inmodels\right\rangle$, where $\inmodels\,\subseteq\pow(\Fm)\times\Fm$ is defined as follows. For any $\Sigma\cup\{\alpha\}\subseteq\Fm$, 
\[
\Sigma\inmodels\alpha\hbox{ iff for all } \langle\mathbf{H},\{1\}\rangle\in\mathcal{H}\hbox{ and  for all valuations }v:\Fmbf\to\mathbf{H}, v[\Sigma]\subseteq\{1\}\hbox{ implies } v(\alpha)=1.
\]
Then by the soundness and completeness of IPC with respect to valuations in Heyting algebras, we have for any $\Sigma\cup\{\alpha\}\subseteq\Fm$, 
\[
\Sigma\inyields\alpha\hbox{ iff }\Sigma\inmodels\alpha,\hbox{ i.e. }\inyields\,=\,\inmodels.
\]
In other words, $\langle\Fmbf,\inyields\rangle$ is complete relative to the class of matrices $\mathcal{H}$.

We would like to point out that there is a slight deviation from the notation used in Section \ref{sec:aal}, the logic $\langle\Fmbf,\inmodels\rangle$ is defined using the class of matrices $\mathcal{H}$ following the recipe indicated in Remark \ref{rem:mat_cons}, and hence is the logic $\langle\Fmbf,\vdash_\mathcal{H}\rangle$. However, we feel that $\langle\Fmbf,\inmodels\rangle$ is a slightly more indicative nomenclature, at least in this case. We will follow this style for the other known logics to come in this paper.

Now, the trivial $\mathcal{L}$-matrix, $\langle\mathbf{1},\{\omega\}\rangle\in\mathcal{H}$. So by Theorem \ref{thm:Plonka_complete}, the left variable inclusion companion of IPC is complete relative to $\Pl(\mathcal{H})$, the class of P\l{}onka sums of the directed systems of matrices in $\mathcal{H}$.

In particular, for each Heyting algebra $\mathbf{H}$, the P\l{}onka sum of the matrices $\langle\mathbf{H},\{1\}\rangle$ and $\langle\mathbf{1},\{\omega\}\rangle$, $\langle\mathbf{H}\oplus\mathbf{1},\{1,\omega\}\rangle\in\Pl(\mathcal{H}\rangle$. For any Heyting algebra $\mathbf{H}$, we will refer to the algebra $\mathbf{H}\oplus\mathbf{1}$ as the \emph{extended Heyting algebra} corresponding to $\mathbf{H}$, and denote it by $\sharpE{H}$. We note that the operations in $\sharpE{H}$ satisfy the following contamination principle.
\[
\neg\omega=\omega\hbox{ and }a\circ\omega=\omega\hbox{ for all }a\in H\cup\{\omega\},
\]
where $\circ$ is any binary operator in $\mathcal{L}$.

We now consider the class of $\mathcal{L}$-matrices $\sharpE{\mathcal{H}}=\{\langle\sharpE{H},\{1,\omega\}\rangle\mid\,\langle\mathbf{H},\{1\}\rangle\in\mathcal{H}\}$ and semantically define IPWK as the logic $\left\langle\Fmbf,\paramodels\right\rangle$, where $\paramodels\,\subseteq\pow(\Fm)\times\Fm$ is defined as follows. For any $\Sigma\cup\{\alpha\}\subseteq\Fm$, $\Sigma\paramodels\alpha$ iff for every $\langle\sharpE{H},\{1,\omega\}\rangle\in\sharpE{\mathcal{H}}$ and every valuation $\val:\Fmbf\to\sharpE{H}$,
\[
\val[\Sigma]\subseteq\{1,\omega\}\hbox{ implies } \val(\alpha)\in\{1,\omega\}.
\]

The next theorems shows that IPWK is actually the left variable inclusion companion of IPC, that is, $\mathrm{IPWK}=\mathrm{IPC}^l$.

\begin{thm}\label{thm:pil-il_sem_conn}
  For all $\Sigma\cup\{\alpha\}\subseteq\Fm$, $\Sigma\paramodels\alpha$ if and only if there is a $\Delta\subseteq\Sigma$ such that $\var(\Delta)\subseteq\var(\alpha)$
  and $\Delta\inyields\alpha$. Moreover, since IPC is finitary, a finite such $\Delta\subseteq\Sigma$ can be found.
\end{thm}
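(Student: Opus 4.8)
The plan is to prove the two implications separately, after recording that by soundness and completeness of IPC with respect to $\mathcal{H}$ we have $\inyields\,=\,\inmodels$, so the right-hand condition is literally $\Sigma\lvari\alpha$ for $\mathrm{IPC}^l$. The engine of both directions will be a \emph{contamination lemma}, which I would prove by a routine induction on the structure of a formula $\varphi$ using $\neg\omega=\omega$ and $a\circ\omega=\omega$: for any $\langle\sharpE{H},\{1,\omega\}\rangle\in\sharpE{\mathcal{H}}$ and any valuation $\val$, if $\val$ sends some variable of $\varphi$ to $\omega$ then $\val(\varphi)=\omega$, whereas if $\val$ sends every variable of $\varphi$ into $H$ then $\val(\varphi)\in H$ and coincides with the value of the corresponding $\mathbf{H}$-valuation. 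In particular $\val(\varphi)\in\{1,\omega\}$ together with $\val(\varphi)\in H$ forces $\val(\varphi)=1$, since $\omega\notin H$.

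For the right-to-left direction I would fix $\Delta\subseteq\Sigma$ with $\var(\Delta)\subseteq\var(\alpha)$ and $\Delta\inyields\alpha$, take any $\langle\sharpE{H},\{1,\omega\}\rangle$ and valuation $\val$ with $\val[\Sigma]\subseteq\{1,\omega\}$, and split on whether $\val$ sends some variable of $\alpha$ to $\omega$. If it does, contamination gives $\val(\alpha)=\omega$; otherwise every variable of $\alpha$, hence every variable in $\Delta$, goes into $H$, so each $\val(\delta)\in H\cap\{1,\omega\}=\{1\}$, and the induced $\mathbf{H}$-valuation agreeing with $\val$ on $\var(\alpha)$ satisfies the premises of $\Delta\inmodels\alpha$, giving $\val(\alpha)=1$. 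Either way $\val(\alpha)\in\{1,\omega\}$, so $\Sigma\paramodels\alpha$. I note this direction also follows immediately from Theorem \ref{thm:Plonka_sound}, since each $\sharpE{H}=\mathbf{H}\oplus\mathbf{1}$ is the P\l{}onka sum of a directed system of models of IPC and is therefore a model of $\mathrm{IPC}^l$.

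For the left-to-right direction I would assume $\Sigma\paramodels\alpha$ and take $\Delta=\{\gamma\in\Sigma\mid\var(\gamma)\subseteq\var(\alpha)\}$, which has $\var(\Delta)\subseteq\var(\alpha)$ by construction. To prove $\Delta\inmodels\alpha$ I would start from an arbitrary Heyting valuation $v$ with $v[\Delta]\subseteq\{1\}$ and build $\val:\Fmbf\to\sharpE{H}$ by setting $\val(p)=v(p)$ for $p\in\var(\alpha)$ and $\val(p)=\omega$ otherwise. The point of this choice is that contamination then sends every $\gamma\in\Sigma\setminus\Delta$ (which has a variable outside $\var(\alpha)$) to $\omega$, while each $\gamma\in\Delta$ keeps its value $\val(\gamma)=v(\gamma)=1$; hence $\val[\Sigma]\subseteq\{1,\omega\}$ and $\Sigma\paramodels\alpha$ yields $\val(\alpha)\in\{1,\omega\}$. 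Since all variables of $\alpha$ are mapped into $H$, the contamination lemma gives $\val(\alpha)=v(\alpha)\in H$, so $\val(\alpha)=1$ and $v(\alpha)=1$. The finitary clause I would then obtain by applying compactness (C5) to $\Delta\inyields\alpha$, extracting a finite $\Delta'\subseteq\Delta$ with $\Delta'\inyields\alpha$ and still $\var(\Delta')\subseteq\var(\alpha)$.

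The hard part will be the left-to-right direction, and specifically the design of the valuation $\val$: the whole argument hinges on assigning $\omega$ to exactly the variables outside $\var(\alpha)$, so that contamination silences every premise of $\Sigma\setminus\Delta$ without disturbing the genuine ``$H$-computation'' taking place over $\var(\alpha)$. The right-to-left direction is essentially the soundness of the P\l{}onka-sum semantics and should be comparatively routine.
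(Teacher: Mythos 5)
Your proposal is correct and follows essentially the same route as the paper's own proof: the same choice $\Delta=\{\gamma\in\Sigma\mid\var(\gamma)\subseteq\var(\alpha)\}$, the same valuation $\val$ sending variables outside $\var(\alpha)$ to $\omega$ for the left-to-right direction, and the same case split on whether some variable of $\alpha$ is sent to $\omega$ for the converse. The only cosmetic differences are that you isolate the contamination lemma explicitly (the paper uses it implicitly) and you note that the right-to-left direction also follows from Theorem \ref{thm:Plonka_sound}.
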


\begin{proof}
  Suppose that $\Sigma\paramodels\alpha$.  Let $\Delta=
  \{\varphi\in\Sigma\mid\var(\varphi)\subseteq\var(\alpha)\}$. We will show that $\Delta\inmodels\alpha$. 
  
  Let $\mathbf{H}$ be a Heyting algebra and $v:\Fmbf\to\mathbf{H}$ be a valuation such that $v[\Delta]\subseteq\{1\}$. We note that $v[\Delta]=\emptyset$ iff $\Delta=\emptyset$, thus $\Delta\neq\emptyset$ implies $v[\Delta]=\{1\}$.
  
  Let $\sharpE{H}$ be the extended Heyting algebra corresponding to $\mathbf{H}$. We now construct the valuation $\val:\Fmbf\to\sharpE{H}$ by mapping variables as follows.
  \[
    \val(p)=\left\{\begin{array}{ll}
    v(p)&\hbox{if }p\in\var(\alpha)\\
    \omega&\hbox{if }p\notin\var(\alpha)
    \end{array}\right.
  \]
  If $\Sigma=\emptyset$, then $\paramodels\alpha$, which implies that $\val(\alpha)\in\{1,\omega\}$. Now clearly, $\val(\alpha)=v(\alpha)\in\mathbf{H}$. Hence $v(\alpha)=1$. Thus in this case, we have 
  \[
  \Delta=\emptyset\inmodels\alpha,\hbox{ that is, }\inmodels\alpha.
  \]
  Now suppose $\Sigma\neq\emptyset$ and $\varphi\in\Sigma$. Then we have the following cases.

  \textbf{Case 1}: $\var(\varphi)\subseteq\var(\alpha)$.

  In this case, $\varphi\in\Delta$ and $\val(\varphi)=v(\varphi)\in v[\Delta]$. So since $\Delta\neq\emptyset$ and hence $v[\Delta]=\{1\}$, we have $\val(\varphi)=1$.

  \textbf{Case 2}: $\var(\varphi)\cap\left(\var(\Sigma)\setminus\var(\alpha)\right)
  \neq\emptyset$.

  In this case, $\val(\varphi)=\omega$. 
  
  Thus we conclude that, for any $\varphi\in\Sigma$, $\val(\varphi)\in\{1,\omega\}$, that is, $\val[\Sigma]\subseteq\{1,\omega\}$. 
  
  Then since $\Sigma\paramodels\alpha$, $\val(\alpha)
  \in\{1,\omega\}$. Now $\val(\alpha)=v(\alpha)\in\mathbf{H}$. 
  
  Hence $v(\alpha)=1$. Since $\mathbf{H}$ was an arbitrarily chosen Heyting algebra and $v:\Fmbf\to\mathbf{H}$ was an arbitrarily chosen valuation with $v[\Delta]\subseteq\{1\}$, this proves that $\Delta\inmodels\alpha$. Then by the completeness of IPC with respect to valuations in Heyting algebras, $\Delta\inyields\alpha$.

  Conversely, suppose that there is some $\Delta\subseteq\Sigma$, with 
  $\var(\Delta)\subseteq\var(\alpha)$, such that $\Delta\inyields\alpha$. Thus $\Delta\inmodels\alpha$, by the soundness of IPC with respect to valuations in Heyting algebras. We need 
  to show that $\Sigma\paramodels\alpha$.

  Let $\sharpE{H}=\mathbf{H}\oplus\mathbf{1}$ be an extended Heyting algebra, and $\val:\Fmbf\to\sharpE{H}$ be a valuation such that $\val[\Sigma]\subseteq\{1,\omega\}$. 
  Following are the possible cases.

  \textbf{Case 1}: $\val(p)=\omega$ for some $p\in\var(\alpha)$. 

  Then $\val(\alpha)=\omega\in\{1,\omega\}$.

  \textbf{Case 2}: $\val(p)\neq\omega$ for all $p\in\var(\alpha)$.

  Let $a_0\in H$, the universe of $\mathbf{H}$. Then we construct a valuation 
  $v:\Fmbf\to\mathbf{H}$ by mapping the variables as follows.
  \[
    v(p)=\left\{\begin{array}{ll}
    \val(p)&\hbox{if }p\in\var(\alpha)\\
    a_0&\hbox{otherwise}
    \end{array}\right.
  \]
  Then since $\var(\Delta)\subseteq\var(\alpha)$, $v[\Delta]=\val[\Delta]
  \subseteq\val[\Sigma]\subseteq\{1,\omega\}$. 
  
  So $v[\Delta]\subseteq\{1,\omega\}\cap\mathbf{H}=\{1\}$. Since $\Delta\inmodels\alpha$, this implies that $v(\alpha)=1$. Now clearly, $\val(\alpha)=v(\alpha)$. Hence $\val(\alpha)=1\in\{1,\omega\}$. 
  
  This 
  proves that $\Sigma\paramodels\alpha$.
\end{proof}

\begin{rem}\label{rem:pil-il_taut}
Since IPWK is the left variable inclusion companion of IPC, we have $\paramodels\alpha$ iff $\inyields\alpha$, by Theorem \ref{lem:S-thm_iff_Sl-thm}.
\end{rem}

\subsubsection{An axiomatization of IPWK}
Let $\Fmbf$ be the formula algebra over a countable set of propositional variables $V$ of the same type, $\mathcal{L}$, as the Heyting algebras. We now introduce a Hilbert-style logic HIPWK.

\begin{dfn}
  HIPWK is the logic $\left\langle\Fmbf,\parayields\right\rangle$, where $\parayields$ is the substitution-invariant syntactic consequence relation of the deductive system with the following axioms and inference rule.
  \begin{enumerate}[label=A\arabic*.]
  \item $\alpha\limp(\beta\limp\alpha)$;
  \item $(\alpha\limp(\beta\limp\gamma))\limp((\alpha\limp\beta)\limp(\alpha\limp\gamma))$;
  \item $\alpha\limp(\beta\limp(\alpha\land\beta))$;
  \item $\alpha\land\beta\limp\alpha$;
  \item $\alpha\land\beta\limp\beta$;
  \item $\alpha\limp\alpha\lor\beta$;
  \item $\beta\limp\alpha\lor\beta$;
  \item $(\alpha\limp\gamma)\limp((\beta\limp\gamma)\limp(\alpha\lor\beta\limp\gamma))$;
  \item $(\alpha\limp\beta)\limp((\alpha\limp\lnot\beta)\limp\lnot\alpha)$;
  \item $0\limp\alpha$;
  \end{enumerate}
  
  \[
    \begin{array}{c}
    \alpha,\quad\alpha\limp\beta\\
    \hline
    \beta
    \end{array},
    \quad\hbox{provided }\var(\alpha)\subseteq\var(\beta)\quad[\hbox{Restricted Modus Ponens (RMP)}].
  \]
\end{dfn}  

\begin{rem}
The axioms (A1)--(A10) along with unrestricted modus ponens (MP) constitute a Hilbert-style presentation for IPC. Thus HIPWK is the restricted rules companion of IPC. Hence the following theorem is obtained as a corollary to Theorem \ref{thm:S-thm_iff_Sr-thm}.
\end{rem}

\begin{thm}\label{thm:pil_thm_iff_il_thm}
  For any $\varphi\in\Fm$, $\parayields\varphi$ if and only if $\inyields\varphi$.
\end{thm}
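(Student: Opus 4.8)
The plan is to recognize that Theorem \ref{thm:pil_thm_iff_il_thm} is essentially a restatement of the already-established Theorem \ref{thm:S-thm_iff_Sr-thm} specialized to the case $\mathcal{S}=\mathrm{IPC}$. The preceding remark has just observed that the axioms (A1)--(A10) together with \emph{unrestricted} modus ponens (MP) form a standard Hilbert-style presentation of IPC, and that HIPWK is precisely the restricted rules companion $\mathrm{IPC}^{re}$ of this presentation: HIPWK keeps the same ten axioms but replaces MP by its restricted variant RMP, which fires only when $\var(\alpha)\subseteq\var(\beta)$. Thus $\parayields$ is exactly the relation $\lrri$ associated to IPC, and $\inyields$ is the consequence relation of IPC itself.

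First I would invoke Theorem \ref{thm:S-thm_iff_Sr-thm}, which asserts that for any Hilbert-style logic $\mathcal{S}$ and its restricted rules companion $\mathcal{S}^{re}$, the two have the same theorems: for every $\varphi$, $\lrri\varphi$ iff $\vdash\varphi$. Applying this with $\mathcal{S}=\mathrm{IPC}$ (so that $\vdash$ is $\inyields$ and $\lrri$ is $\parayields$) yields immediately that $\parayields\varphi$ iff $\inyields\varphi$ for every formula $\varphi$, which is exactly the statement to be proved. So the proof is a one-line corollary, and this is indeed how the remark preceding the theorem has framed it.

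The one point that requires care—and which I would make explicit to justify the reduction—is the verification that HIPWK as defined really is the restricted rules companion of the specific IPC presentation, rather than merely a logic with a restricted MP rule. This amounts to checking two things: that the axiom set of HIPWK coincides with that of the chosen IPC presentation (immediate, since both are (A1)--(A10)), and that RMP is exactly the restriction $\{\frac{\Gamma}{\alpha}\in R_{\mathrm{IPC}}\mid \var(\Gamma)\subseteq\var(\alpha)\}$ of the single rule MP. For the binary rule with premises $\{\alpha,\alpha\limp\beta\}$ and conclusion $\beta$, the side condition $\var(\{\alpha,\alpha\limp\beta\})\subseteq\var(\beta)$ reduces to $\var(\alpha)\subseteq\var(\beta)$, since $\var(\alpha\limp\beta)=\var(\alpha)\cup\var(\beta)$ already lies inside $\var(\beta)$ precisely when $\var(\alpha)\subseteq\var(\beta)$. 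This matches the stated side condition of RMP verbatim.

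I do not expect any genuine obstacle here, since the hard combinatorial work—the induction on proof length that translates an unrestricted derivation into a restricted one via a variable-collapsing substitution—has already been carried out in the proof of Theorem \ref{thm:S-thm_iff_Sr-thm}. The only mild subtlety worth flagging is that Definition \ref{dfn:res_rules_comp} and Theorem \ref{thm:S-thm_iff_Sr-thm} are formulated for an \emph{arbitrary} Hilbert-style presentation, and the conclusion about theorems is presentation-independent only because it is pinned to the fixed presentation $(A,R_\mathcal{S})$; so I would state plainly that we are fixing the presentation of IPC given by (A1)--(A10) plus MP, and that the soundness and completeness of IPC with respect to Heyting-algebra valuations guarantees that this presentation induces the genuine IPC consequence relation $\inyields$, so no ambiguity arises in the choice. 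With that fixed, the result follows directly from Theorem \ref{thm:S-thm_iff_Sr-thm}.
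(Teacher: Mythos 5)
Your proposal is correct and follows essentially the same route as the paper: the paper also obtains this theorem as an immediate corollary of Theorem \ref{thm:S-thm_iff_Sr-thm}, having observed in the preceding remark that (A1)--(A10) with unrestricted MP axiomatize IPC and hence that HIPWK is exactly $\mathrm{IPC}^{re}$. Your additional check that the side condition $\var(\{\alpha,\alpha\limp\beta\})\subseteq\var(\beta)$ reduces to $\var(\alpha)\subseteq\var(\beta)$ is a correct, if routine, detail that the paper leaves implicit.
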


It is now shown below that HIPWK is complete relative to the following class of matrices. 
\[
\sharpE{\mathcal{H}}=\{\langle\sharpE{H},\{1,\omega\}\rangle\mid\,\langle\mathbf{H},\{1\}\rangle\in\mathcal{H}\}.
\]
This would then imply that IPWK = HIPWK, and that they are the semantic and syntactic presentations of the same logic.

\begin{thm}[Soundness]\label{thm:sound}
  For all $\Sigma\cup\{\alpha\}\subseteq\Fm$, if $\Sigma\parayields\alpha$, then $\Sigma\paramodels\alpha$.
\end{thm}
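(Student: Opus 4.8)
The plan is to prove soundness by induction on the length of a derivation of $\alpha$ from $\Sigma$ in HIPWK, showing that the property ``receives a designated value under a fixed valuation into an extended Heyting algebra'' is preserved at every line. So I would fix an arbitrary matrix $\langle\sharpE{H},\{1,\omega\}\rangle\in\sharpE{\mathcal{H}}$ and a valuation $\val:\Fmbf\to\sharpE{H}$ with $\val[\Sigma]\subseteq\{1,\omega\}$, and aim to show $\val(\varphi)\in\{1,\omega\}$ for every line $\varphi$, in particular for $\varphi=\alpha$.

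The crucial auxiliary fact, which I would establish first by induction on the structure of $\varphi$, is a characterization of when a formula receives the contaminating value: for any $\varphi\in\Fm$, $\val(\varphi)=\omega$ iff $\val(p)=\omega$ for some $p\in\var(\varphi)$. Indeed, if every variable of $\varphi$ is sent into $H$ (constants are interpreted in $H$ as well), then at each node of the parse tree all arguments stay in $H$, the $\sharpE{H}$-operations agree with those of $\mathbf{H}$, and so $\val(\varphi)$ coincides with the value of $\varphi$ under the corresponding $\mathbf{H}$-valuation; in particular $\val(\varphi)\in H$, so $\val(\varphi)\neq\omega$. Conversely, if some variable of $\varphi$ is sent to $\omega$, then the contamination principle $\neg\omega=\omega$ and $a\circ\omega=\omega$ propagates $\omega$ up the parse tree from that leaf to the root, giving $\val(\varphi)=\omega$.

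With this in hand there are two substantive steps. \textbf{Axioms are designated:} for an instance $\varphi$ of one of A1--A10, if some variable of $\varphi$ is sent to $\omega$ then $\val(\varphi)=\omega$ by the characterization, and otherwise $\val(\varphi)$ equals the value of $\varphi$ under an $\mathbf{H}$-valuation, which is $1$ because A1--A10 are theorems of IPC and IPC is sound with respect to Heyting algebras; either way $\val(\varphi)\in\{1,\omega\}$. \textbf{RMP preserves designation:} suppose $\val(\alpha),\val(\alpha\limp\beta)\in\{1,\omega\}$ with $\var(\alpha)\subseteq\var(\beta)$. If $\val(\alpha)=\omega$, then some $p\in\var(\alpha)\subseteq\var(\beta)$ has $\val(p)=\omega$, so $\val(\beta)=\omega$; if $\val(\alpha\limp\beta)=\omega$, then since $\var(\alpha\limp\beta)=\var(\beta)$ some variable of $\beta$ is $\omega$, so again $\val(\beta)=\omega$; and in the remaining case $\val(\alpha)=\val(\alpha\limp\beta)=1$, all relevant variables lie in $H$, the computation takes place in $\mathbf{H}$, and $\val(\alpha\limp\beta)=\val(\alpha)\to\val(\beta)=1\to\val(\beta)=\val(\beta)$, forcing $\val(\beta)=1$. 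The induction is then routine: axiom lines and members of $\Sigma$ are designated by the axiom step and by the hypothesis $\val[\Sigma]\subseteq\{1,\omega\}$, while lines obtained by RMP are designated by the preservation step.

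The single point at which the argument could fail, and hence the part deserving the most care, is the RMP step, where the side condition $\var(\alpha)\subseteq\var(\beta)$ is indispensable: without it a premise $\alpha$ whose value is $\omega$ because of a variable \emph{not} occurring in $\beta$ could be combined with $\val(\alpha\limp\beta)=\omega$ while $\val(\beta)$ is an undesignated element of $H$, destroying preservation. I would therefore be careful to track, via the contamination characterization above, exactly which variables force the value $\omega$. I note finally that soundness admits an abstract alternative: since HIPWK is the restricted rules companion of IPC, $\parayields$ is contained in the left variable inclusion companion consequence relation of IPC by Theorem \ref{thm:Sr_sub_Sl}, and Theorem \ref{thm:pil-il_sem_conn} identifies that relation with $\paramodels$; I nonetheless prefer the direct matrix argument above, as it keeps the treatment of HIPWK self-contained.
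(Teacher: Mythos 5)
Your proof is correct, but it takes a genuinely different route from the paper's. The paper disposes of soundness in two lines: HIPWK is by construction the restricted rules companion $\mathrm{IPC}^{re}$, IPWK has already been identified with the left variable inclusion companion $\mathrm{IPC}^{l}$ (Theorem \ref{thm:pil-il_sem_conn}), and Theorem \ref{thm:Sr_sub_Sl} gives $\lrri\,\subseteq\,\lvari$ in general --- precisely the ``abstract alternative'' you mention and set aside in your last sentence. What you do instead is a self-contained semantic induction on derivations, resting on the contamination lemma ($\val(\varphi)=\omega$ iff some variable of $\varphi$ is sent to $\omega$; your remark that the constants $0,1$ land in $H$ is the right thing to check here, since the P\l{}onka sum interprets nullary operations in the bottom component $\mathbf{H}$), the soundness of the axioms A1--A10 via IPC's soundness over Heyting algebras, and the preservation of designation under RMP, where you correctly isolate the role of the side condition $\var(\alpha)\subseteq\var(\beta)$ and the Heyting identity $1\limp b=b$. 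Each approach buys something: the paper's argument is short and reuses general machinery, but its correctness is only as transparent as the (somewhat delicate, minimal-derivation) proof of Theorem \ref{thm:Sr_sub_Sl}; your direct argument is longer but makes visible exactly where the variable inclusion restriction on modus ponens earns its keep at the level of the matrix semantics, which is pedagogically valuable and independent of the companion-logic apparatus. No gaps.
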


\begin{proof}
  We recall that HIPWK is the restricted rules companion, and IPWK is the left variable inclusion companion of IPC. Then this follows from Theorem \ref{thm:Sr_sub_Sl}.
\end{proof}

\begin{thm}[Completeness]\label{thm:complete}
  For all $\Sigma\cup\{\alpha\}\subseteq\Fm$, if $\Sigma\paramodels\alpha$, then $\Sigma\parayields\alpha$.
\end{thm}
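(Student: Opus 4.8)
The plan is to reduce everything to facts already established: the semantic characterization of $\paramodels$ (Theorem \ref{thm:pil-il_sem_conn}), the coincidence of the theorems of HIPWK and IPC (Theorem \ref{thm:pil_thm_iff_il_thm}), and the Deduction theorem for IPC. The argument mirrors the proof of Theorem \ref{thm:deduction_thm-Sr=Sl}, with RMP playing the role of the restricted modus ponens used there.

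First I would start from the hypothesis $\Sigma\paramodels\alpha$ and invoke Theorem \ref{thm:pil-il_sem_conn} to extract a \emph{finite} $\Delta=\{\varphi_1,\ldots,\varphi_n\}\subseteq\Sigma$ with $\var(\Delta)\subseteq\var(\alpha)$ and $\Delta\inyields\alpha$. The case $\Delta=\emptyset$ (equivalently $\inyields\alpha$) is disposed of immediately, since then $\parayields\alpha$ by Theorem \ref{thm:pil_thm_iff_il_thm} and monotonicity gives $\Sigma\parayields\alpha$. For $\Delta\neq\emptyset$, I would apply the Deduction theorem for IPC $n$ times to $\Delta\inyields\alpha$, obtaining the single IPC-theorem
\[
\inyields\varphi_1\limp(\varphi_2\limp(\cdots\limp(\varphi_n\limp\alpha)\cdots)).
\]
By Theorem \ref{thm:pil_thm_iff_il_thm}, this same formula is a theorem of HIPWK, i.e. $\parayields\varphi_1\limp(\cdots\limp(\varphi_n\limp\alpha)\cdots)$.

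The heart of the argument is then to peel off the antecedents one at a time using RMP, starting from the formulas $\varphi_1,\ldots,\varphi_n\in\Delta$. At the $k$-th step the minor premise is $\varphi_k$ and the conclusion is $\varphi_{k+1}\limp(\cdots\limp(\varphi_n\limp\alpha)\cdots)$ (with the final step yielding $\alpha$ itself), so the side condition for RMP demands $\var(\varphi_k)\subseteq\var(\varphi_{k+1}\limp\cdots\limp\alpha)$. This is exactly the step that must be checked carefully and is the only real obstacle: it holds because $\var(\alpha)$ is contained in the variable set of every such conclusion while $\var(\varphi_k)\subseteq\var(\alpha)$ by the choice of $\Delta$, so $\var(\varphi_k)\subseteq\var(\alpha)\subseteq\var(\varphi_{k+1}\limp\cdots\limp\alpha)$, and at the last step $\var(\varphi_n)\subseteq\var(\alpha)$ directly. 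Hence each application of RMP is legitimate, giving $\Delta\parayields\alpha$ and, by monotonicity, $\Sigma\parayields\alpha$.

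I would close by remarking that, more abstractly, Theorem \ref{thm:pil-il_sem_conn} identifies $\paramodels$ with the left variable inclusion consequence $\lvari$ of IPC, while HIPWK is by construction the restricted rules companion, so $\parayields$ is $\lrri$; the completeness inclusion $\paramodels\,\subseteq\,\parayields$ is then just the inclusion $\lvari\,\subseteq\,\lrri$ supplied by Theorem \ref{thm:IPWK=HIPWK} (itself an instance of Theorem \ref{thm:deduction_thm-Sr=Sl}). The explicit derivation above merely unwinds this chain of identifications.
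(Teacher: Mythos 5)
Your proposal is correct and follows essentially the same route as the paper's own proof: extract a finite $\Delta\subseteq\Sigma$ with $\var(\Delta)\subseteq\var(\alpha)$ via Theorem \ref{thm:pil-il_sem_conn}, apply the IPC Deduction theorem $n$ times, transfer the resulting theorem to HIPWK by Theorem \ref{thm:pil_thm_iff_il_thm}, and peel off the antecedents with RMP. Your explicit verification of the RMP side condition is a welcome elaboration of a step the paper leaves implicit, but the argument is the same.
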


\begin{proof}
  Suppose $\Sigma\paramodels\alpha$.
  
  Then by Theorem \ref{thm:pil-il_sem_conn}, there exists a finite $\Delta\subseteq\Sigma$, with $\var(\Delta)\subseteq\var(\alpha)$, such that $\Delta\inyields\alpha$.
  
  If $\Delta=\emptyset$, then $\inyields\alpha$. So by Theorem \ref{thm:pil_thm_iff_il_thm}, $\parayields\alpha$, which implies that $\Sigma\parayields\alpha$. Now suppose  $\emptyset\neq\Delta=\{\varphi_1,\ldots,\varphi_n\}$. Then by applying the Deduction theorem in IPC $n$ times, we have 
  \[
  \inyields(\varphi_1\limp(\ldots\limp(\varphi_n\limp\alpha)\ldots)).
  \]
  Therefore, by Theorem \ref{thm:pil_thm_iff_il_thm}, we have 
  \[
  \parayields(\varphi_1\limp(\ldots\limp(\varphi_n\limp\alpha)\ldots)).
  \]
  Now since $\var(\Delta)\subseteq\var(\alpha)$, by applying RMP $n$ times, we have $\Delta\parayields\alpha$. Finally, since $\Delta\subseteq\Sigma$, we have $\Sigma\parayields\alpha$.
\end{proof}

The above soundness and completeness theorems show that IPWK = HIPWK. Hence we can conclude that $\mathrm{IPC}^l=\mathrm{IPWK}=$HIPWK$=\mathrm{IPC}^{re}$.

\begin{rem}\label{rem:paraconsistency_IPWK}
IPWK is, at least, weakly paraconsistent since ECQ fails in IPWK as it is a left variable inclusion companion logic, by Theorem \ref{thm:Sl-paraconsistent}. Thus IPWK is, at least, weakly paraconsistent.

However, since LNC holds in IPC, by Theorem \ref{thm:pil_thm_iff_il_thm}, LNC holds in IPWK. Thus IPWK is not strongly paraconsistent.

It is interesting to note that $0\limp\alpha$ is an axiom of IPWK. This means that the converse of the Deduction theorem does not hold.
\end{rem}

However, a restricted converse of the Deduction theorem holds in HIPWK as indicated in part (ii) of the following theorem. 

\begin{thm}\label{thm:partialDT}
  For any $\Sigma\cup\{\alpha,\beta\}\subseteq\Fm$, we have the following.
  \begin{enumerate}[label=(\roman*)]
  \item If $\Sigma\cup\{\alpha\}\parayields\beta$ then $\Sigma\parayields\alpha\limp\beta$. (Deduction theorem)
  \item If $\Sigma\parayields\alpha\limp\beta$ and $\var(\alpha)\subseteq\var(\beta)$, then $\Sigma\cup\{\alpha\}\parayields\beta$. (Restricted converse of the Deduction theorem)
  \end{enumerate}
\end{thm}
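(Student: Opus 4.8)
The plan is to prove the two parts of Theorem~\ref{thm:partialDT} using the general results already established for left variable inclusion companions, since we have shown that $\mathrm{HIPWK}=\mathrm{IPC}^{re}=\mathrm{IPC}^l$, i.e. $\parayields\,=\,\lrri\,=\,\lvari$ where the companions are taken of IPC. The key observation is that IPC satisfies both the Deduction theorem and its converse, so Theorems~\ref{thm:Ded_Sl} and~\ref{thm:Det_Sl} apply directly once we reinterpret $\parayields$ as $\lvari$ for $\mathcal{S}=\mathrm{IPC}$.

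For part (i), I would invoke Theorem~\ref{thm:Ded_Sl}. Since the Deduction theorem holds in IPC (proved as in CPC, as noted after Theorem~\ref{thm:IPWK=HIPWK}), that theorem tells us the Deduction theorem passes to the left variable inclusion companion $\mathrm{IPC}^l$. Because $\mathrm{HIPWK}=\mathrm{IPC}^l$, we conclude that if $\Sigma\cup\{\alpha\}\parayields\beta$ then $\Sigma\parayields\alpha\limp\beta$, which is exactly the claim. For part (ii), I would similarly appeal to Theorem~\ref{thm:Det_Sl}. The converse of the Deduction theorem holds in IPC, so Theorem~\ref{thm:Det_Sl} gives the restricted converse in $\mathrm{IPC}^l=\mathrm{HIPWK}$: namely, $\Sigma\parayields\alpha\limp\beta$ implies $\Sigma\cup\{\alpha\}\parayields\beta$ under the side condition $\var(\alpha)\subseteq\var(\beta)$. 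This matches the statement verbatim.

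Alternatively, one could give a direct syntactic argument staying entirely inside HIPWK, which may be illuminating even though the quick route above suffices. For part (i), the subtlety is that HIPWK's only rule is \emph{restricted} modus ponens (RMP), so the usual Hilbert-style induction on the length of a derivation of $\beta$ from $\Sigma\cup\{\alpha\}$ must be checked to respect the variable-inclusion side condition at the RMP step; here the inductive manipulation $\Sigma\cup\{\alpha\}\parayields\gamma$ becoming $\Sigma\parayields\alpha\limp\gamma$ goes through because the relevant axioms A1 and A2 are available and the variable sets only grow under the $\alpha\limp(-)$ prefixing. For part (ii), the direct argument simply prepends $\alpha$ to the hypotheses and applies RMP once to $\alpha$ and $\Sigma\parayields\alpha\limp\beta$, which is legitimate precisely because the hypothesis $\var(\alpha)\subseteq\var(\beta)$ is the side condition RMP demands.

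I expect the main obstacle, should one pursue the direct proof of part (i), to be verifying that the restriction on modus ponens does not block the inductive step: one must confirm that whenever an instance of unrestricted MP appears in the IPC-style deduction-theorem induction, the prefixed formulas satisfy $\var(\alpha)\subseteq\var(\alpha\limp\gamma)$ so that RMP applies. Taking the companion-logic route via Theorems~\ref{thm:Ded_Sl} and~\ref{thm:Det_Sl} sidesteps this entirely, which is why I would present that as the primary proof and mention the direct argument only as a remark.
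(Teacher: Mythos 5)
Your primary argument is exactly the paper's proof: it notes that the Deduction theorem and its converse hold in IPC and that $\mathrm{HIPWK}=\mathrm{IPC}^l$, then invokes Theorems~\ref{thm:Ded_Sl} and~\ref{thm:Det_Sl}. The additional direct syntactic sketch is a reasonable aside but is not needed; the proposal is correct and matches the paper's approach.
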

  
\begin{proof}
  We note that the Deduction theorem and its converse hold in IPC. Since HIPWK = IPWK is the left variable inclusion companion of IPC, this follows from Theorems \ref{thm:Ded_Sl} and \ref{thm:Det_Sl}.
\end{proof}

\begin{rem}
Another interesting consequence of the restriction on the converse of the Deduction theorem is as follows. For any $\alpha,\beta\in\Fm$, $\alpha\parayields\beta$ implies that $\parayields\alpha\limp\beta$ but the converse does not hold, in general, thus breaking the classically perceived equivalence between entailment and implication.

Similar remarks can be made regarding PWK, some of which are discussed in \cite{CiuniCarrara2016,Bonzio2017}, and any logic of left variable inclusion where the Deduction theorem and its converse hold.
\end{rem}

\subsection{Paraconsistent pre-rough logic (PPRL)}\label{subsec:pprl}

\begin{dfn}\label{dfn:qba}
  Suppose $\mathcal{L}=\{\land,\lor,\neg,0,1\}$ is a logical language, where $\land,\lor,\neg,0,1$ are operators with arities $2,2,1,0,0$ respectively. Then an algebra $\mathbf{Q}$ of type $\mathcal{L}$, with universe $Q$, is called a \emph{quasi-Boolean algebra} if the following conditions are satisfied.
  \begin{enumerate}[label=(\roman*)]
      \item $\mathbf{Q}$ as an algebra of type $\mathcal{L}\setminus\{\neg\}$ is a bounded distributive lattice.
      \item $\neg\neg a=a$ for all $a\in Q$.
      \item $\neg(a\lor b)=\neg a\land\neg b$ for all $a,b\in Q$.
  \end{enumerate}
\end{dfn} 

\begin{rem}
The difference between a Boolean algebra and a quasi-Boolean algebra is that in the latter, it is not necessarily the case that 
${a\lor\neg a=1}$ or equivalently, ${a\land\neg a=0}$ for all $a$ in its universe.
\end{rem}

\begin{rem}
Quasi-Boolean algebras were named such and investigated by Bia\l{}ynicki-Birula and Rasiowa in \cite{Bialynicki-BirulaRasiowa1957}. Essentially identical structures, with no least element, were also studied by Moisil in \cite{Moisil1935} under the name \emph{de Morgan lattices}, and by Kalman in \cite{Kalman1958} under the name \emph{distributive i-lattices}. The above definition of a quasi-Boolean algebra can be found in \cite{Rasiowa1974}. For more on quasi-Boolean algebras and comparisons between quasi-Boolean algebras, De Morgan lattices, and distributive i-lattices, see \cite{Rasiowa1974, Gastaminza1968, Dunn1982}.
\end{rem}

\begin{dfn}\label{dfn:pra}
Suppose $\mathcal{L}=\{\land,\lor,\limp,\neg,I,C,0,1\}$ is a logical language, where the arities of $\land,\lor,\limp,\neg,I,C,0,1$ are $2,2,2,1,1,1,0,0$ respectively. Then an algebra $\mathbf{R}$ of type $\mathcal{L}$, with universe $R$, is called a \emph{pre-rough algebra} if the following conditions hold.
\begin{enumerate}[label=(\roman*)]
    \item $\mathbf{R}$ as an algebra of type $\mathcal{L}\setminus\{\limp,I,C\}$ is a quasi-Boolean algebra.
    \item $I1=1$.
    \item $I(a\land b)=Ia\land Ib$ for all $a,b\in R$.
    \item $\neg Ia\lor Ia=1$ for all $a\in R$.
    \item $Ia\limp a=1$ for all $a\in R$.
    \item $Ca=\neg I\neg a$ for all $a\in R$.
    \item $a\limp b=(\neg Ia\lor Ib)\land(\neg Ca\lor Cb)$ for all $a,b\in R$.
    \item $Ca\limp Cb=1$ and $Ia\limp Ib=1$ imply $a\limp b=1$ for all $a,b\in R$.
\end{enumerate}
\end{dfn}

\begin{rem}
A pre-rough algebra was first defined in \cite{BanerjeeChakraborty1996}. This was streamlined later in \cite{ChakrabortySahaSen2014}. Both these definitions used an order relation as a primitive. The above definition of a pre-rough algebra, although equivalent, is slightly different from both the original and the streamlined definitions.
\end{rem}

Let $\mathcal{L}$ be the language described in Definition \ref{dfn:pra}, $V$ a countable set of propositional variables, and $\Fmbf$ denote the formula algebra over $V$ of type $\mathcal{L}$. 

Thus pre-rough algebras are algebras of type $\mathcal{L}$. Pre-rough logic (PRL) can then be described semantically using the class of $\mathcal{L}$-matrices,
\[
\mathcal{R}=\{\langle\mathbf{R},\{1\}\rangle\mid\,\mathbf{R}\hbox{ is a pre-rough algebra}\},
\]
as the logic $\left\langle\Fmbf,\prmodels\right\rangle$, where $\prmodels\,\subseteq\mathcal{P}(\Fm)\times\Fm$ is defined as follows. For any $\Sigma\cup\{\alpha\}\subseteq\Fm$, 
\[
\Sigma\prmodels\alpha\hbox{ iff for all }\langle\mathbf{R},\{1\}\rangle\in\mathcal{R}\hbox{ and for all valuations }v:\Fmbf\to\mathbf{R},v[\Sigma]\subseteq\{1\}\hbox{ implies }v(\alpha)=1.
\]

Now, the trivial $\mathcal{L}$-matrix, $\langle\mathbf{1},\{\omega\}\rangle\in\mathcal{R}$. So by Theorem \ref{thm:Plonka_complete}, the left variable inclusion companion of PRL is complete with respect to $\Pl(\mathcal{R})$, the class of P\l{}onka sums of the directed systems of matrices in $\mathcal{R}$.

In particular, for each pre-rough algebra $\mathbf{R}$, the P\l{}onka sum of the matrices $\langle\mathbf{R},\{1\}\rangle$ and $\langle\mathbf{1},\{\omega\}\rangle$, $\langle\mathbf{R}\oplus\mathbf{1},\{1,\omega\}\rangle\in\Pl(\mathcal{R})$. For any pre-rough algebra $\mathbf{R}$, we will refer to the algebra $\mathbf{R}\oplus\mathbf{1}$ as the \emph{extended pre-rough algebra} corresponding to $\mathbf{R}$, and denote it by $\sharpE{R}$. We note that the operations in $\sharpE{R}$ satisfy the following contamination principle.
\[
\neg\omega=\omega,\,I\omega=\omega,\,C\omega=\omega,\hbox{ and } a\circ\omega=\omega\hbox{ for all }a\in\sharpE{R},
\]
where $\circ$ denotes any binary operator in $\mathcal{L}$.

We now consider the class of $\mathcal{L}$-matrices $\sharpE{\mathcal{R}}=\{\langle\sharpE{R},\{1,\omega\}\mid\,\langle\mathbf{R},\{1\}\rangle\in\mathcal{R}\}$ and semantically define PPRL as the logic $\langle\Fmbf,\pprmodels\rangle$, where $\pprmodels\,\subseteq\mathcal{P}(\Fm)\times\Fm$ is defined as follows. For any $\Sigma\cup\{\alpha\}\subseteq\Fm$, $\Sigma\pprmodels\alpha$ iff for every $\langle\sharpE{R},\{1,\omega\}\rangle\in\sharpE{\mathcal{R}}$ and every valuation $\val:\Fmbf\to\sharpE{R}$,
\[
\val[\Sigma]\subseteq\{1,\omega\}\hbox{ implies } \val(\alpha)\in\{1,\omega\}.
\] 

The next theorem shows that PPRL is actually the left variable inclusion companion of PRL, that is, $\mathrm{PPRL}=\mathrm{PRL}^l$.

\begin{thm}\label{thm:prl-pprl_sem_conn}
  For all $\Sigma\cup\{\alpha\}\subseteq\Fm$, $\Sigma\pprmodels\alpha$ if and only if there is a $\Delta\subseteq\Sigma$ such that $\var(\Delta)\subseteq\var(\alpha)$
  and $\Delta\prmodels\alpha$. Moreover, since PRL is finitary, a finite such $\Delta\subseteq\Sigma$ can be found.
\end{thm}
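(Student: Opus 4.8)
The plan is to follow the same two-directional argument used in the proof of Theorem~\ref{thm:pil-il_sem_conn}, replacing Heyting algebras by pre-rough algebras throughout. The one simplification here is that both sides of the biconditional are already phrased semantically (via $\pprmodels$ and $\prmodels$), so no soundness/completeness detour through a syntactic consequence relation is required; the entire argument stays at the level of valuations into (extended) pre-rough algebras.

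For the forward direction, I would assume $\Sigma\pprmodels\alpha$ and set $\Delta=\{\varphi\in\Sigma\mid\var(\varphi)\subseteq\var(\alpha)\}$, which automatically satisfies $\var(\Delta)\subseteq\var(\alpha)$; it then remains to show $\Delta\prmodels\alpha$. Take an arbitrary pre-rough algebra $\mathbf{R}$ and a valuation $v\colon\Fmbf\to\mathbf{R}$ with $v[\Delta]\subseteq\{1\}$, form the extended algebra $\sharpE{R}=\mathbf{R}\oplus\mathbf{1}$, and define $\val\colon\Fmbf\to\sharpE{R}$ by $\val(p)=v(p)$ for $p\in\var(\alpha)$ and $\val(p)=\omega$ otherwise. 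The key verification is that $\val[\Sigma]\subseteq\{1,\omega\}$: for $\varphi\in\Sigma$ with $\var(\varphi)\subseteq\var(\alpha)$ we have $\varphi\in\Delta$, so $\val(\varphi)=v(\varphi)=1$, while for $\varphi$ containing a variable outside $\var(\alpha)$ the contamination principle for $\sharpE{R}$ forces $\val(\varphi)=\omega$. Since $\langle\sharpE{R},\{1,\omega\}\rangle\in\sharpE{\mathcal{R}}$, applying $\Sigma\pprmodels\alpha$ yields $\val(\alpha)\in\{1,\omega\}$; but every variable of $\alpha$ is sent into $R$, so $\val(\alpha)=v(\alpha)\in R$, whence $v(\alpha)=1$. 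As $\mathbf{R}$ and $v$ were arbitrary, $\Delta\prmodels\alpha$.

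For the converse, I would assume $\Delta\subseteq\Sigma$ with $\var(\Delta)\subseteq\var(\alpha)$ and $\Delta\prmodels\alpha$, and let $\sharpE{R}$ be an extended pre-rough algebra with a valuation $\val$ satisfying $\val[\Sigma]\subseteq\{1,\omega\}$. If $\val(p)=\omega$ for some $p\in\var(\alpha)$, contamination gives $\val(\alpha)=\omega$. Otherwise every variable of $\alpha$ lands in $R$, and I define $v\colon\Fmbf\to\mathbf{R}$ to agree with $\val$ on $\var(\alpha)$ and to send all other variables to a fixed $a_0\in R$; since $\var(\Delta)\subseteq\var(\alpha)$, we get $v[\Delta]=\val[\Delta]\subseteq\{1,\omega\}\cap R=\{1\}$, so $\Delta\prmodels\alpha$ gives $v(\alpha)=1$ and hence $\val(\alpha)=v(\alpha)=1$. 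Either way $\val(\alpha)\in\{1,\omega\}$, establishing $\Sigma\pprmodels\alpha$. The finitary refinement then follows because PRL is finitary: from $\Delta\prmodels\alpha$ one extracts a finite $\Delta_0\subseteq\Delta\subseteq\Sigma$ with $\Delta_0\prmodels\alpha$ and $\var(\Delta_0)\subseteq\var(\alpha)$.

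I expect no deep obstacle, as the argument is structurally identical to the IPWK case; the only point needing a little care is that the contamination principle must now be invoked for the extra unary operators $I$ and $C$, not merely $\neg$ and the binary connectives. This is immediate from the $\oplus\mathbf{1}$ construction, under which every basic operation returns $\omega$ as soon as one argument equals $\omega$, so contamination holds uniformly across the whole pre-rough signature and both valuation constructions transfer unchanged.
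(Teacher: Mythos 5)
Your proposal is correct and matches the paper's approach exactly: the paper's own proof simply states that the argument of Theorem~\ref{thm:pil-il_sem_conn} carries over with Heyting algebras replaced by pre-rough algebras, and notes the same simplification you identify (no detour through a syntactic consequence relation, since PRL is defined semantically). Your additional observation that contamination in $\sharpE{R}$ covers the unary operators $I$ and $C$ is a correct and harmless elaboration of the $\oplus\mathbf{1}$ construction.
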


\begin{proof}
The proof of this theorem is essentially the same as the proof of Theorem \ref{thm:pil-il_sem_conn} with Heyting algebras, extended Heyting algebras, $\paramodels$, and $\inmodels$ replaced by pre-rough algebras, extended pre-rough algebras, $\pprmodels$, and $\prmodels$, respectively. The only difference in this case is that we do not move between the syntactic and semantic consequence relations as PRL is only defined semantically.
\end{proof}

\begin{rem}
Since PPRL is the left variable inclusion companion of PRL, we have $\pprmodels\alpha$ iff $\prmodels\alpha$, for any $\alpha\in\Fm$, by Lemma \ref{lem:S-thm_iff_Sl-thm}.
\end{rem}

\subsubsection{An axiomatization of PPRL}

Let $\Fmbf$ be the formula algebra over a countable set of propositional variables $V$ of the same type, $\mathcal{L}$, as the pre-rough algebras. We first describe a Hilbert-style logic HPRL.

\begin{dfn}\label{dfn:hprl}
HPRL is the logic $\langle\Fmbf,\pryields\rangle$, where $\pryields$ is the substitution-invariant syntactic consequence relation of the deductive system with the following axioms and inference rules.

\textbf{Axioms:}
\begin{enumerate}[label=A\arabic*.]
    \item $\alpha\limp\neg\neg\alpha$
    \item $\neg\neg\alpha\limp\alpha$
    \item $(\alpha\land\beta)\limp\beta$
    \item $(\alpha\land\beta)\limp(\beta\land\alpha)$
    \item $(\alpha\land(\beta\lor\gamma))\limp((\alpha\land\beta)\lor(\alpha\land\gamma))$
    \item $((\alpha\land\beta)\lor(\alpha\land\gamma))\limp(\alpha\land(\beta\lor\gamma))$
    \item $(\alpha\lor\beta)\limp\neg(\neg\alpha\land\neg\beta)$
    \item $\neg(\neg\alpha\land\neg\beta)\limp(\alpha\lor\beta)$
    \item $C\alpha\limp\neg I\neg\alpha$
    \item $\neg I\neg\alpha\limp C\alpha$
    \item $I\alpha\limp\alpha$
    \item $(I\alpha\land I\beta)\limp I(\alpha\land\beta)$
    \item $(\alpha\limp\beta)\limp((\neg I\alpha\lor I\beta)\land(\neg C\alpha\lor C\beta))$
    \item $((\neg I\alpha\lor I\beta)\land(\neg C\alpha\lor C\beta))\limp(\alpha\limp\beta)$
\end{enumerate}

\textbf{Rules of inference:}
\begin{enumerate}[label=R\arabic*.]
    \item $\begin{array}{c}
    \alpha,\quad\alpha\limp\beta\\
    \hline
    \beta
    \end{array}
    \quad$ [Modus ponens (MP)]
    \item $\begin{array}{c}
    \alpha\limp\beta,\quad\beta\limp\gamma\\
    \hline
    \alpha\limp\gamma
    \end{array}
    \quad$ [Hypothetical Syllogism (HS)]
    \item $\begin{array}{c}
    \alpha\\
    \hline
    \beta\limp\alpha
    \end{array}$
    
    \item $\begin{array}{c}
    \alpha\limp\beta\\
    \hline
    \neg\beta\limp\neg\alpha
    \end{array}$
    
    \item $\begin{array}{c}
    \alpha\limp\beta,\quad\alpha\limp\gamma\\
    \hline
    \alpha\limp(\beta\land\gamma)
    \end{array}$
    
    \item $\begin{array}{c}
    \alpha\limp\beta,\quad\beta\limp\alpha,\quad\gamma\limp\delta,\quad\delta\limp\gamma\\
    \hline
    (\alpha\limp\gamma)\limp(\beta\limp\delta)
    \end{array}$
    
    \item $\begin{array}{c}
    \alpha\limp\beta\\
    \hline
    I\alpha\limp I\beta
    \end{array}$
    
    \item $\begin{array}{c}
    \alpha\\
    \hline
    I\alpha
    \end{array}$
    
    \item $\begin{array}{c}
    I\alpha\limp I\beta,\quad C\alpha\limp C\beta\\
    \hline
    \alpha\limp\beta
    \end{array}$
\end{enumerate}
\end{dfn}

\begin{rem}
HPRL is an axiomatization of PRL, in the sense that HPRL is sound and weakly complete with respect to valuations in pre-rough algebras. That is, for any $\Sigma\cup\{\alpha\}\subseteq\Fm$,
\[
\Sigma\pryields\alpha\hbox{ implies }\Sigma\prmodels\alpha\quad\hbox{and}\quad\prmodels\alpha\hbox{ implies }\pryields\alpha.
\]
A Hilbert-style presentation for PRL was first given in \cite{BanerjeeChakraborty1996}. This was later streamlined in \cite{ChakrabortySahaSen2014}. The soundness and weak completeness of these Hilbert systems with respect to valuations in pre-rough algebras were also discussed in the above mentioned papers. Our set of axioms is, however, slightly modified from the ones in the aforementioned papers.
\end{rem}

We next introduce the restricted rules companion of HPRL. One can easily see from Definition \ref{dfn:hprl} that, if $\dfrac{\Gamma}{\alpha}$ is one of the rules (R3) -- (R9), then $\var(\Gamma)\subseteq\var(\alpha)$, i.e. there is no loss of variables in passing from the premises to the conclusions of these rules. Hence imposing variable inclusion restrictions on these rules do not result in anything different. Hence HPRL and $\HPPRL$ will differ only in the restrictions of MP and HS. The following is a Hilbert-style presentation of $\HPPRL$.

\begin{dfn}\label{dfn:hpprl}
$\HPPRL$ is the Hilbert-style logic $\langle\Fmbf,\ppryields\rangle$, where $\Fmbf$ is the same as in the definition of HPRL, and $\ppryields$ is the substitution-invariant syntactic consequence relation of the deductive system with the same set of axioms as HPRL and the following inference rules.

\begin{enumerate}[label=R\arabic*$^\prime$.]
    \item $\begin{array}{c}
    \alpha,\quad\alpha\limp\beta\\
    \hline
    \beta
    \end{array},
    \quad\hbox{provided }\var(\alpha)\subseteq\var(\beta)\quad$ [Restricted MP (RMP)]
    
    \item $\begin{array}{c}
    \alpha\limp\beta,\quad\beta\limp\gamma\\
    \hline
    \alpha\limp\gamma
    \end{array},
    \quad\hbox{provided }\var(\beta)\subseteq\var(\alpha)\cup\var(\gamma)\quad$ [Restricted HS (RHS)]
    \end{enumerate}
    (R3) -- (R9) as in Definition \ref{dfn:hprl}.
\end{dfn}

\begin{thm}\label{thm:pprl_thm_iff_prl_thm}
  For any $\varphi\in\Fm$, $\ppryields\varphi$ if and only if $\pryields\varphi$.
\end{thm}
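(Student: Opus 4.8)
The plan is to recognize this statement as a direct instance of Theorem \ref{thm:S-thm_iff_Sr-thm}, which already establishes that a Hilbert-style logic and its restricted rules companion have exactly the same theorems. All that remains is to verify that $\HPPRL$, as given in Definition \ref{dfn:hpprl}, really is the restricted rules companion of HPRL in the precise sense of Definition \ref{dfn:res_rules_comp}.

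First I would confirm that the two logics share the same set of axioms; this is immediate from Definitions \ref{dfn:hprl} and \ref{dfn:hpprl}. Next I would check, rule by rule, that the inference rules of $\HPPRL$ are exactly those obtained by applying the variable-inclusion restriction $\var(\Gamma)\subseteq\var(\alpha)$ to the rules of HPRL. For modus ponens $\frac{\alpha,\alpha\limp\beta}{\beta}$, the condition $\var(\{\alpha,\alpha\limp\beta\})\subseteq\var(\beta)$ simplifies to $\var(\alpha)\subseteq\var(\beta)$, which is precisely RMP (R1$^\prime$); for hypothetical syllogism $\frac{\alpha\limp\beta,\beta\limp\gamma}{\alpha\limp\gamma}$, the condition becomes $\var(\beta)\subseteq\var(\alpha)\cup\var(\gamma)$, which is precisely RHS (R2$^\prime$). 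For each of R3--R9 I would observe that the conclusion already contains every variable appearing in the premises, so the restriction $\var(\Gamma)\subseteq\var(\alpha)$ is vacuously satisfied and the restricted rule coincides with the original; this is exactly the content of the remark preceding Definition \ref{dfn:hpprl}.

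Having established that $\HPPRL$ is the restricted rules companion of HPRL, the conclusion $\ppryields\varphi$ iff $\pryields\varphi$ follows at once by applying Theorem \ref{thm:S-thm_iff_Sr-thm} with $\mathcal{S} = $ HPRL. There is essentially no obstacle here beyond the bookkeeping of the rule-by-rule check: the substantive argument---the inductive translation of an HPRL-proof into one that uses only restricted rules, via the variable-collapsing substitution $\sigma$---was already carried out in the proof of Theorem \ref{thm:S-thm_iff_Sr-thm}. The only point demanding care is the verification for R3--R9, since one must be sure that no rule silently loses a variable in passing from premises to conclusion; a quick inspection confirms that each of these rules either preserves or enlarges the variable set, so none of them need restricting.
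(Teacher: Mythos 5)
Your proposal is correct and takes essentially the same route as the paper, which simply cites Theorem \ref{thm:S-thm_iff_Sr-thm}; the rule-by-rule verification you spell out (that RMP and RHS are the restricted forms of MP and HS, and that R3--R9 need no restriction) is exactly the content of the remark preceding Definition \ref{dfn:hpprl}.
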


\begin{proof}
  This follows from Theorem \ref{thm:S-thm_iff_Sr-thm}.
\end{proof}

\begin{rem}\label{rem:HPRL_SrXSl}
It may be noted here that $\HPPRL$ is not the left variable inclusion companion of HPRL. This can be shown via the following derivation in HPRL.

Suppose $p,q$ are distinct variables. Then
\[
\begin{array}{lcll}
     p\land q&\pryields&1.\;p\land q&\hbox{(Hypothesis)}\\
     &&2.\;(p\land q)\limp q&\hbox{(Axiom (A3))}\\
     &&3.\;q&\hbox{(MP on (1) and (2))}\\
     &&4.\;p\limp q&\hbox{(Rule (R3) on (3))}
\end{array}
\]
Thus $p\land q\pryields p\limp q$. Since $\var(p\land q)=\var(p\limp q)$, $p\land q\pryields^l p\limp q$, where $\pryields^l$ is the consequence relation in the left variable inclusion companion of HPRL. However, the above derivation cannot be carried out in $\HPPRL$ because the application of MP in Step 3 cannot be replaced by an application of RMP. 

More generally, any derivation of $p\limp q$ from $p\land q$ that involves splitting $p\land q$ to either $p$ or $q$ cannot be done in $\HPPRL$. This is because $p\land q\not\pryields^l p,q$ and hence $p\land q\not\ppryields p,q$ as $\ppryields\,\subseteq\,\pryields^l$ by Theorem \ref{thm:Sr_sub_Sl}. Thus a derivation of $p\limp q$ from $p\land q$ in $\HPPRL$ is possible only if this can be achieved in HPRL without splitting the conjunction. Now, scanning through all combinations of axioms of rules of HPRL that can lead to the derivation of an implication from a conjunction (this is possible since there are finitely many such), we find that such a derivation is not possible. In each case, the problem of arriving at an implication, such as $p\limp q$, from $p\land q$ gets reduced to a problem of deriving a similar or more complex implication at an earlier step in the derivation. Thus we can conclude that $p\land q\not\ppryields p\limp q$.

Hence by Theorem \ref{thm:deduction_thm-Sr=Sl}, we can conclude that the Deduction theorem does not hold in HPRL.

The above example serves to illustrate again that the restricted rules companion is not always the same as the left variable inclusion companion of a logic.
\end{rem}

\begin{thm}[Soundness]\label{thm:sound_pprl}
  For all $\Sigma\cup\{\alpha\}\subseteq\Fm$, if $\Sigma\ppryields\alpha$, then $\Sigma\pprmodels\alpha$.
\end{thm}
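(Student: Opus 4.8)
The plan is to exploit the fact that $\HPPRL$ is, by construction (Definition \ref{dfn:hpprl}), the restricted rules companion of the Hilbert-style logic HPRL, while $\pprmodels$ is, by Theorem \ref{thm:prl-pprl_sem_conn}, exactly the left variable inclusion companion of the semantic relation $\prmodels$ of PRL. Unlike the IPWK case (Theorem \ref{thm:sound}), I cannot simply quote Theorem \ref{thm:Sr_sub_Sl} as a one-liner, because there the syntactic relation of HIPWK and the semantic relation of IPWK coincide on \emph{all} premise sets (IPC being sound \emph{and} complete relative to Heyting algebras). Here the syntactic relation $\pryields$ of HPRL and the semantic relation $\prmodels$ of PRL are only known to satisfy the soundness inclusion $\pryields\,\subseteq\,\prmodels$. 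So the argument must keep these two relations carefully distinct and route through the soundness direction only.

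Concretely, first I would assume $\Sigma\ppryields\alpha$ and apply Theorem \ref{thm:Sr_sub_Sl} to the Hilbert-style logic HPRL. Since $\HPPRL$ is precisely $(\mathrm{HPRL})^{re}$, this gives $\ppryields\,\subseteq\,\pryields^l$, where $\pryields^l$ denotes the left variable inclusion companion of $\pryields$. Unfolding the definition of $\pryields^l$ (Definition \ref{dfn:left_var_inc}), this yields a subset $\Delta\subseteq\Sigma$ with $\var(\Delta)\subseteq\var(\alpha)$ such that $\Delta\pryields\alpha$.

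Next I would invoke the soundness of HPRL with respect to valuations in pre-rough algebras to pass from $\Delta\pryields\alpha$ to $\Delta\prmodels\alpha$. At this point I have exhibited a $\Delta\subseteq\Sigma$ with $\var(\Delta)\subseteq\var(\alpha)$ and $\Delta\prmodels\alpha$, which is exactly the right-hand side of the characterization in Theorem \ref{thm:prl-pprl_sem_conn}; applying that theorem delivers $\Sigma\pprmodels\alpha$. The whole chain reads
\[
\Sigma\ppryields\alpha \;\Longrightarrow\; \Sigma\pryields^l\alpha \;\Longrightarrow\; \Sigma\pprmodels\alpha,
\]
where the first implication is Theorem \ref{thm:Sr_sub_Sl} and the second combines the soundness half of HPRL with Theorem \ref{thm:prl-pprl_sem_conn}.

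I do not expect a serious technical obstacle, since every link is either a theorem already proved in the excerpt (Theorems \ref{thm:Sr_sub_Sl} and \ref{thm:prl-pprl_sem_conn}) or the soundness half of the known axiomatization of PRL. The one point requiring genuine care --- and the reason this proof is not the verbatim one-liner of Theorem \ref{thm:sound} --- is that the completeness of HPRL relative to PRL is only \emph{weak} (it holds for theorems, not for arbitrary consequences). Hence the hard part is conceptual bookkeeping: I must use only the inclusion $\pryields\,\subseteq\,\prmodels$ and never assume the reverse. The left variable inclusion characterization of $\pprmodels$ in Theorem \ref{thm:prl-pprl_sem_conn} is precisely what makes the single-direction soundness of HPRL suffice to reach the semantic relation of PPRL.
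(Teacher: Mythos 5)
Your proof is correct, and it takes a genuinely different route from the paper's. The paper proves soundness directly: it first disposes of the case $\Sigma=\emptyset$ via Theorem \ref{thm:S-thm_iff_Sr-thm} and weak completeness, then verifies that each restricted rule of $\HPPRL$ preserves membership in $\{1,\omega\}$ under any valuation $\val:\Fmbf\to\sharpE{R}$ (using the variable inclusion constraint $\var(\Sigma)\subseteq\var(\alpha)$ built into each rule), and finally runs an induction on the length of a derivation $\langle\varphi_1,\ldots,\varphi_n\rangle$ to conclude $\val(\varphi_n)\in\{1,\omega\}$. Your argument instead composes three results already on the table: Theorem \ref{thm:Sr_sub_Sl} applied to HPRL gives $\ppryields\,\subseteq\,\pryields^{l}$, hence a $\Delta\subseteq\Sigma$ with $\var(\Delta)\subseteq\var(\alpha)$ and $\Delta\pryields\alpha$; the soundness half of HPRL relative to PRL gives $\Delta\prmodels\alpha$; and the characterization in Theorem \ref{thm:prl-pprl_sem_conn} then yields $\Sigma\pprmodels\alpha$. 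Each link is valid, and your observation about why the one-line argument used for Theorem \ref{thm:sound} is unavailable here --- PRL is only weakly complete relative to HPRL, so $\pryields^{l}$ cannot simply be identified with $\pprmodels$ --- is exactly the right point of care; you correctly use only the soundness inclusion $\pryields\,\subseteq\,\prmodels$. What your approach buys is brevity and reuse of the general machinery; what the paper's direct induction buys is self-containedness and an explicit demonstration of how the contaminating element $\omega$ interacts with each restricted rule, which is perhaps more illuminating about why the restriction on MP and HS is what makes the rules sound for the P\l{}onka-sum semantics.
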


\begin{proof}
Suppose $\Sigma\ppryields\alpha$.

If $\Sigma=\emptyset$, then $\ppryields\alpha$. Now since $\HPPRL$ is the restricted rules companion of HPRL, by Theorem \ref{thm:S-thm_iff_Sr-thm}, we have $\pryields\alpha$. Then by the soundness of HPRL relative to PRL, $\prmodels\alpha$. Finally, using the fact that PPRL is the left variable inclusion companion of PRL, by Lemma \ref{lem:S-thm_iff_Sl-thm}, it can be concluded that $\pprmodels\alpha$. 

Thus in particular, if $\alpha$ is an axiom of $\HPPRL$, then $\pprmodels\alpha$.

Now, let $\Sigma\neq\emptyset$.

First suppose that $\dfrac{\Sigma}{\alpha}$ is an instance of one of the rules of inference of $\HPPRL$. Then $\dfrac{\Sigma}{\alpha}$ is also an instance of a rule in HPRL. So, by soundness of HPRL relative to PRL, we have $\Sigma\prmodels\alpha$. Now, since $\dfrac{\Sigma}{\alpha}$ is an instance of a rule of inference of $\HPPRL$, $\var(\Sigma)\subseteq\var(\alpha)$. Thus $\Sigma\pprmodels\alpha$ as PPRL is the left variable inclusion companion of PRL.

Finally, we use induction to prove the general case as follows.

Let $D=\langle\varphi_1,\ldots,\varphi_n(=\alpha)\rangle$ be a derivation of $\alpha$ from $\Sigma$ in $\HPPRL$.
  
Suppose $\langle\sharpE{R},\{1,\omega\}\rangle\in\sharpE{\mathcal{R}}$, and $\val:\Fmbf\to\sharpE{R}$ be a valuation such that $\val[\Sigma]\subseteq\{1,\omega\}$. Since $\Sigma\neq\emptyset$, $\val[\Sigma]\neq\emptyset$.
  
Now, each $\varphi_i, 1\le i\le n$, in $D$ is an instance of an axiom of $\HPPRL$, or is an element of $\Sigma$, or is obtained by applying one of the rules of inference stated in Definition \ref{dfn:hpprl} on a subset of $\{\varphi_1,\ldots,\varphi_{i-1}\}$. We will show that $\val(\varphi_i)\in\{1,\omega\}$ for each $1\le i\le n$, by induction on $i$.

\textbf{Base case}: $i=1$

Then $\varphi_i=\varphi_1$ is either an axiom of $\HPPRL$ or a member of $\Sigma$. If $\varphi_1$ is an axiom, then as previously shown, $\pprmodels\varphi$, that is, $\val(\varphi_i)\in\{1,\omega\}$. On the other hand, if $\varphi_1\in\Sigma$, then $\val(\varphi_1)\in\{1,\omega\}$ by our assumption.

\textbf{Induction hypothesis}: Suppose $\emptyset\neq\val\left[\{\varphi_1,\ldots,\varphi_{i-1}\}\right]\subseteq\{1,\omega\}$ for some $1<i\le n$.

\textbf{Induction step}: We need to show that $\val(\varphi_i)\in\{1,\omega\}$. If $\varphi_i$ is an axiom or a member of $\Sigma$, then by the same reasoning as in the base case, we have $\val(\varphi_i)\in\{1,\omega\}$.
So suppose $\varphi_i$ is obtained from a subset of $\{\varphi_1,\ldots,\varphi_{i-1}\}$ by an application of one of the rules of inference in Definition \ref{dfn:hpprl}. Then, by the induction hypothesis, $\val(\varphi_j)\in\{1,\omega\}$ for each $1\le j\le i-1$. So by our previous arguments, $\val(\varphi_i)\in\{1,\omega\}$. 
  
Thus $\val(\varphi_i)\in\{1,\omega\}$ for each $1\le i\le n$. Hence $\val(\alpha)=\val(\varphi_n)\in\{1,\omega\}$. This implies that $\Sigma\pprmodels\alpha$, since $\langle\sharpE{R},\{1,\omega\}\rangle\in\sharpE{\mathcal{R}}$ and $\val:\Fmbf\to\sharpE{R}$ were arbitrary.
\end{proof}

\begin{thm}[Weak Completeness]\label{thm:thm-complete_pprl}
  For any $\varphi\in\Fm$, if $\pprmodels\varphi$ then $\ppryields\varphi$.
\end{thm}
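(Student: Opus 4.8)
The plan is to chain together correspondences that have all been established earlier in the excerpt, so the argument is essentially a short sequence of biconditionals. The crucial structural observation is that this is \emph{weak} completeness, i.e. it concerns only theorems (the case $\Sigma=\emptyset$). This means I never have to reason about nonempty premise sets, and can instead pass freely between PPRL and PRL, and between $\HPPRL$ and HPRL, purely at the level of theoremhood.

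First I would use the fact — recorded in the remark just after Theorem \ref{thm:prl-pprl_sem_conn} — that PPRL is the left variable inclusion companion of PRL. Applying Lemma \ref{lem:S-thm_iff_Sl-thm} to this companion relationship, the two logics have the same theorems, so $\pprmodels\varphi$ holds if and only if $\prmodels\varphi$. Hence from the hypothesis $\pprmodels\varphi$ I obtain $\prmodels\varphi$. Next I would invoke the weak completeness of HPRL relative to PRL, namely that $\prmodels\varphi$ implies $\pryields\varphi$; this is precisely the axiomatization result for HPRL stated in the remark following Definition \ref{dfn:hprl} (and established in the pre-rough logic literature cited there). This gives $\pryields\varphi$. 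Finally, Theorem \ref{thm:pprl_thm_iff_prl_thm} — itself a corollary of Theorem \ref{thm:S-thm_iff_Sr-thm} for the restricted rules companion — asserts $\ppryields\varphi$ iff $\pryields\varphi$, so I conclude $\ppryields\varphi$, as required.

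There is no genuine obstacle in the proof itself: all the technical weight has already been carried by the earlier results, in particular the semantic characterization of the left variable inclusion companion (Theorem \ref{thm:prl-pprl_sem_conn}) and the theorem-preservation lemmas for the two companions. The one point demanding care is conceptual rather than computational, namely why only \emph{weak} completeness is claimed. A strong completeness transfer would require handling nonempty $\Sigma$, which would route through the Deduction theorem as in the completeness proof for IPWK (Theorem \ref{thm:complete}); but the Deduction theorem \emph{fails} for HPRL (see Remark \ref{rem:HPRL_SrXSl}), so that route is unavailable. The restriction to theorems is therefore essential, and the short chain of equivalences above is in fact the natural argument in the absence of the Deduction theorem.
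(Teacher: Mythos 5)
Your proof is correct and follows exactly the same chain as the paper's own argument: pass from $\pprmodels\varphi$ to $\prmodels\varphi$ via Lemma \ref{lem:S-thm_iff_Sl-thm} (PPRL being the left variable inclusion companion of PRL), then to $\pryields\varphi$ via the weak completeness of HPRL relative to PRL, then to $\ppryields\varphi$ via Theorem \ref{thm:pprl_thm_iff_prl_thm}. Your closing observation about why the failure of the Deduction theorem in HPRL blocks a strong completeness transfer is accurate and consistent with the paper's Remark \ref{rem:HPRL_SrXSl}.
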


\begin{proof}
    This follows from the facts that PPRL is the left variable inclusion companion of PRL, Lemma \ref{lem:S-thm_iff_Sl-thm}, the weak completeness of PRL with respect to valuations in pre-rough algebras \cite{BanerjeeChakraborty1996,ChakrabortySahaSen2014}, and Theorem \ref{thm:pprl_thm_iff_prl_thm}.
\end{proof}

\begin{rem}
Summarizing the above results, we can say that the soundness and weak completeness of the restricted rules companion of HPRL relative to the left variable inclusion companion of PRL follow from the soundness and weak completeness of HPRL relative to PRL. However, the restricted rules companion of HPRL does not coincide with the left variable inclusion companion of HPRL.
\end{rem}

\begin{rem}\label{rem:paraconsistent_PPRL}
ECQ fails in PPRL since it is the left variable inclusion companion of PRL, by Theorem \ref{thm:Sl-paraconsistent}. ECQ fails in $\HPPRL$ as it is the restricted rules companion of HPRL, by Corollary \ref{cor:Sr-paraconsistent}.
\end{rem}

\begin{rem}
$\HPPRL$ and PPRL are, in fact, strongly paraconsistent.

In addition to the failure of ECQ, as shown above, the following arguments prove that LNC also fails in these two logics.

The algebra $\mathbf{R}$, of type $\mathcal{L}=\{\land,\lor,\limp,\neg,I,C,0,1\}$, with universe $R=\{0,a,1\}$, where the operations are defined according to the following tables, is mentioned in \cite{BanerjeeChakraborty1996} as the smallest non-trivial pre-rough algebra.
\[
\begin{array}{cccc}
     \begin{array}{c|c|c|c}
          &\neg&I&C\\
          \hline
          0&1&0&0\\
          a&a&a&a\\
          1&0&1&1
     \end{array}&\begin{array}{c|c|c|c}
          \land&0&a&1\\
          \hline
          0&0&0&0\\
          a&0&a&a\\
          1&0&a&1
     \end{array}&\begin{array}{c|c|c|c}
          \lor&0&a&1\\
          \hline
          0&0&a&1\\
          a&a&a&1\\
          1&1&1&1
     \end{array}&\begin{array}{c|c|c|c}
          \limp&0&a&1\\
          \hline
          0&1&1&1\\
          a&a&a&1\\
          1&0&a&1
     \end{array}
\end{array}
\]
Clearly, in the above pre-rough algebra, $\neg(a\land\neg a)\neq1$. Thus by the soundness of HPRL with respect to valuations in pre-rough algebras, LNC fails in PRL. Then the claim follows by Remark \ref{rem:Sl&Sr_paracons}.
\end{rem}

\section{``Other'' paraconsistent logics}\label{sec:paracons_other}

In Section \ref{sec:lvari}, it was proved that the left variable inclusion companion of any logic is paraconsistent, at least in the weak sense (Theorem \ref{thm:Sl-paraconsistent}). As a result of this, the restricted rules companion of any logic with a Hilbert-style presentation is also, at least, weakly paraconsistent (Corollary \ref{cor:Sr-paraconsistent}). We now argue in this section, using two examples, that not all paraconsistent logics can be characterized as the left variable inclusion companion or the restricted rules companion of a logic. The following two logics, $\rml$ and $\lps$, are both weakly paraconsistent and have similar looking semantics, but while the Deduction theorem holds in $\lps$, it does not in $\rml$.

\subsection{\texorpdfstring{$\rml$}{RM3}}
Our first example is the logic $\rml$ (relevance-mingle logic). This is one of the relevance logics proposed in \cite{AndersonBelnap1975} that has been studied as an important 3-valued paraconsistent logic in \cite{Avron1991,ArieliAvron2015}.  The logical language used for $\rml$ is $\mathcal{L}=\{\land,\lor,\limp,\neg\}$. Then with the usual assumption of a countable set of variables and the construction of a formula algebra of type $\mathcal{L}$, the logic $\rml$ can be described syntactically as the Hilbert-style logic $\langle\Fmbf,\vdash_{\rml}\rangle$ with the following axioms and rules.

\textbf{Axioms:}
\begin{enumerate}[label=A\arabic*.]
\item $\alpha\limp\alpha$
\item $(\alpha\limp\beta)\limp((\beta\limp\gamma)\limp(\alpha\limp\gamma))$
\item $\alpha\limp((\alpha\limp\beta)\limp\beta)$
\item $(\alpha\limp(\alpha\limp\beta))\limp(\alpha\limp\beta)$
\item $(\alpha\land\beta)\limp\alpha$
\item $(\alpha\land\beta)\limp\beta$
\item $((\alpha\limp\beta)\land(\alpha\limp\gamma))\limp(\alpha\limp(\beta\land\gamma))$
\item $\alpha\limp(\alpha\lor\beta)$
\item $\beta\limp(\alpha\lor\beta)$
\item $((\alpha\limp\gamma)\land(\beta\limp\gamma))\limp((\alpha\lor\beta)\limp\gamma)$
\item $(\alpha\land(\beta\lor\gamma))\limp((\alpha\land\beta)\lor(\alpha\land\gamma))$
\item $\neg\neg\alpha\limp\alpha$
\item $(\alpha\limp\neg\beta)\limp(\beta\limp\neg\alpha)$
\item $\alpha\limp(\alpha\limp\alpha)$
\item $\alpha\lor(\alpha\limp\beta)$
\end{enumerate}

\textbf{Rules of inference:}
\begin{enumerate}[label=R\arabic*.]
    \item $\begin{array}{c}
    \alpha,\quad\beta\\
    \hline
    \alpha\land\beta
    \end{array}
    \quad$ [$\land I$]
    \item $\begin{array}{c}
    \alpha,\quad\alpha\limp\beta\\
    \hline
    \beta
    \end{array}
    \quad$ [MP]
\end{enumerate}

$\rml$ is sound and complete with respect to the matrix $\langle M_3,\{1,1/2\}\rangle$, where $M_3$ is the algebra whose universe is $\{1,1/2,0\}$ and whose operations are given by the following tables  \cite{AndersonBelnap1975}.

\[
\begin{array}{llll}
     \begin{array}{|c|ccc|}
     \hline
          \land&1&1/2&0\\
          \hline
          1&1&1/2&0\\
          1/2&1/2&1/2&0\\
          0&0&0&0\\
          \hline
     \end{array}&
     \begin{array}{|c|ccc|}
     \hline
          \lor&1&1/2&0\\
          \hline
          1&1&1&1\\
          1/2&1&1/2&1/2\\
          0&1&1/2&0\\
          \hline
     \end{array}&
     \begin{array}{|c|ccc|}
     \hline
          \limp&1&1/2&0\\
          \hline
          1&1&0&0\\
          1/2&1&1/2&0\\
          0&1&1&1\\
          \hline
     \end{array}&
     \begin{array}{|c|c|}
     \hline
          \neg&\\
          \hline
          1&0\\
          1/2&1/2\\
          0&1\\
          \hline
     \end{array}
\end{array}
\]

\begin{rem}\label{rem:RM3_not-Sr_not-Sl}
It is clear from the above Hilbert-style presentation that $\rml$ is not the restricted rules companion of any logic since unrestricted modus ponens is a rule of inference in this logic.

$\rml$ cannot be the left variable inclusion companion of any logic either. To see this, suppose the contrary, that is, $\rml$ is the left variable inclusion companion of some logic $\langle\Fmbf,\vdash\rangle$. Suppose $p,q$ are distinct variables. Clearly, $\not\vdash_{\rml} q$. Now, by MP, we have $\{p,p\limp q\}\vdash_{\rml}q$. Then since $\rml$ is the left variable inclusion companion of $\langle\Fmbf,\vdash\rangle$, there must exist a $\Gamma\subseteq\{p,p\limp q\}$ such that $\var(\Gamma)\subseteq\{q\}$ and $\Gamma\vdash q$. However, since $p\neq q$, $\Gamma$ can only be the empty set. Thus $\vdash q$, and hence by Lemma \ref{lem:S-thm_iff_Sl-thm}, $\vdash_{\rml}q$, which is contrary to our assumption.

It is, however, well known that $\rml$ is paraconsistent. To see this, suppose $p,q$ are distinct variables and let $v:\Fmbf\to M_3$ be a valuation such that $v(p)=1/2$ and $v(q)=0$. Then $v(p)=v(\neg p)=1/2$ but $v(q)\notin\{1,1/2\}$. Hence by soundness, $\{p,\neg p\}\not\vdash_{\rml}q$.
\end{rem}

\begin{rem}\label{rem:non-Sr_non-Sl_paracons}
The arguments in the above remark can be generalized to any paraconsistent logic $\langle\Fmbf,\vdash\rangle$ (such that there are at least two distinct variables, and there exists a variable $q$ with $\not\vdash q$) that has the unrestricted modus ponens as a rule. Such a logic cannot be the restricted rules companion or the left variable inclusion companion of any logic.
\end{rem}

\begin{rem}\label{rem:RML_SrXSl}
The left variable inclusion and the restricted rules companions of $\rml$ do not coincide. This can be seen from the following example.

Suppose $p,q$ are distinct variables. Then
\[
\begin{array}{lcll}
     p\land q&\vdash_{\rml}&1.\;p\land q&\hbox{(Hypothesis)}\\
     &&2.\;(p\land q)\limp p&\hbox{(Axiom (A5))}\\
     &&3.\;p&\hbox{(MP on (1) and (2))}\\
     &&4.\;p\limp (p\lor q)&\hbox{(Axiom (A8))}\\
     &&5.\;p\lor q&\hbox{(MP in (3) and (4)}
\end{array}
\]
Thus $p\land q\vdash_{\rml} p\lor q$. Since $\var(p\land q)=\var(p\lor q)$, $p\land q\vdash_{\rml}^l p\lor q$, where $\vdash_{\rml}^l$ is the consequence relation in the left variable inclusion companion of $\rml$. However, the above derivation cannot be carried out in the restricted rules companion of $\rml$ because the application of MP in Step 3 cannot be replaced by an application of the restricted version of it. In fact, by reasoning similar to that in Remark \ref{rem:HPRL_SrXSl}, we can conclude that $p\lor q$ cannot be derived from $p\land q$ in the restricted rules companion of $\rml$.

Hence by Theorem \ref{thm:deduction_thm-Sr=Sl}, the Deduction theorem does not hold in $\rml$. This can also be shown via the following example.

Suppose $p,q$ are distinct variables, as before. Then using the tables for $M_3$ above, we see that for any valuation $v:\Fmbf\to M_3$, $v(p\lor\neg p),v(q\lor\neg q)\in\{1,1/2\}$. Hence by the completeness of $\rml$ with respect to the matrix $\langle M_3,\{1,1/2\}\rangle$, $p\lor\neg p\vdash_{\rml}q\lor\neg q$. Now, let $v^\prime:\Fmbf\to M_3$ be a valuation such that $v(p)=1$ and $v(q)=1/2$. Then using the tables for $M_3$ above, we see that $v^\prime(p\lor \neg p)=1$ and $v^\prime(q\lor\neg q)=1/2$, and hence $v^\prime((p\lor\neg p)\limp(q\lor\neg q))=0$. Thus by the soundness of $\rml$ with respect to the matrix $\langle M_3,\{1,1/2\}\rangle$, $\not\vdash_{\rml}(p\lor\neg p)\limp(q\lor\neg q)$.

The above example serves to illustrate again that the restricted rules companion is not always the same as the left variable inclusion companion of a logic.
\end{rem}

\subsection{\texorpdfstring{$\lps$}{LPS3}}

Our second example is the logic $\lps$ introduced in \cite{TarafderChakraborty2015}.  The logical language used for $\lps$ is the same as in $\rml$. With the usual assumption of a countable set of variables and the construction of a formula algebra of type $\mathcal{L}$, the logic $\lps$ can be described syntactically as the Hilbert-style logic $\langle\Fmbf,\vdash_{\lps}\rangle$ with the following axioms and rules \cite{TarafderChakraborty2015}.

\textbf{Axioms:}
\begin{enumerate}[label=A\arabic*.]
\item $\alpha\limp(\beta\limp\alpha)$
\item $(\alpha\limp(\beta\limp\gamma))\limp((\alpha\limp\beta)\limp(\alpha\limp\gamma))$
\item $(\alpha\land\beta)\limp\alpha$
\item $(\alpha\land\beta)\limp\beta$
\item $\alpha\limp(\alpha\lor\beta)$
\item $((\alpha\limp\gamma)\land(\beta\limp\gamma))\limp((\alpha\lor\beta)\limp\gamma)$
\item $((\alpha\limp\beta)\land(\alpha\limp\gamma))\limp(\alpha\limp(\beta\land\gamma))$
\item $(\alpha\limp\neg\neg\alpha)\land(\neg\neg\alpha\limp\alpha)$
\item $(\neg(\alpha\land\beta)\limp(\neg\alpha\lor\neg\beta))\land((\neg\alpha\lor\neg\beta)\limp\neg(\alpha\land\beta))$
\item $(\alpha\land\neg\alpha)\limp(\neg(\beta\limp\alpha)\limp\gamma)$
\item $(\alpha\limp\beta)\limp(\neg(\alpha\limp\gamma)\limp\beta)$
\item $(\neg\alpha\limp\beta)\limp(\neg(\gamma\limp\alpha)\limp\beta)$
\item $\bot\limp\alpha$
\item $(\alpha\land(\beta\limp\bot))\limp\neg(\alpha\limp\beta)$
\item $(\alpha\land(\neg\alpha\limp\bot))\lor(\alpha\land\neg\alpha)\lor(\neg\alpha\land(\alpha\limp\bot))$
\end{enumerate}

In the above axioms, $\bot$ is an abbreviation for $\neg(\varphi\limp\varphi)$, where $\varphi$ is any formula.

\textbf{Rules of inference:}
\begin{enumerate}[label=R\arabic*.]
    \item $\begin{array}{c}
    \alpha,\quad\beta\\
    \hline
    \alpha\land\beta
    \end{array}
    \quad$ [$\land I$]
    \item $\begin{array}{c}
    \alpha,\quad\alpha\limp\beta\\
    \hline
    \beta
    \end{array}
    \quad$ [MP]
\end{enumerate}

It has also been shown in \cite{TarafderChakraborty2015} that $\lps$ is sound and weakly complete with respect to the matrix $PS_3=\langle P,\{1,1/2\}\rangle$, where $P$ is the algebra whose universe is $\{1,1/2,0\}$ and whose operations are given by the following tables.
\[
\begin{array}{llll}
     \begin{array}{|c|ccc|}
     \hline
          \land&1&1/2&0\\
          \hline
          1&1&1/2&0\\
          1/2&1/2&1/2&0\\
          0&0&0&0\\
          \hline
     \end{array}&
     \begin{array}{|c|ccc|}
     \hline
          \lor&1&1/2&0\\
          \hline
          1&1&1&1\\
          1/2&1&1/2&1/2\\
          0&1&1/2&0\\
          \hline
     \end{array}&
     \begin{array}{|c|ccc|}
     \hline
          \limp&1&1/2&0\\
          \hline
          1&1&1&0\\
          1/2&1&1&0\\
          0&1&1&1\\
          \hline
     \end{array}&
     \begin{array}{|c|c|}
     \hline
          \neg&\\
          \hline
          1&0\\
          1/2&1/2\\
          0&1\\
          \hline
     \end{array}
\end{array}
\]

\begin{rem}
The semantics for the logics $\lps$ and $\rml$ differ only in the interpretation of the $\limp$ operator.
\end{rem}

\begin{rem}
$\lps$ cannot be the restricted rules companion or the left variable inclusion companion of any logic for the same reasons as in the case of $\rml$, that were mentioned in the Remarks \ref{rem:RM3_not-Sr_not-Sl} and \ref{rem:non-Sr_non-Sl_paracons}.

However, as noted in \cite{TarafderChakraborty2015}, $\lps$ is paraconsistent. To see this, suppose $p,q$ are distinct variables and $v:\Fmbf\to P$ is a valuation such that $v(p)=1/2$ and $v(q)=0$. Then clearly $v(p)=v(\neg p)=1/2$ but $v(q)\notin\{1,1/2\}$. Hence by Soundness, $\{p,\neg p\}\not\vdash_{\lps}q$, which implies that $\lps$ is paraconsistent.
\end{rem}

\begin{rem}
It has been proved in \cite{TarafderChakraborty2015} that the Deduction theorem holds in $\lps$. Hence by Theorem \ref{thm:deduction_thm-Sr=Sl}, the restricted rules companion of $\lps$ will coincide with its left variable inclusion companion.
\end{rem}

The above logics are just a couple among many paraconsistent logics that have unrestricted modus ponens and hence cannot be in the class of paraconsistent logics that are restricted rules companions or left variable inclusion companions of other logics. More examples can be found in \cite{ArieliAvron2015, DuttaChakraborty2015,TarafderChakraborty2015}. In addition to examples, \cite{DuttaChakraborty2015} also contains a recipe for creating new paraconsistent logics with consequence relations obeying certain conditions.

\section{Conclusions and future directions}

In this paper, we have proved the following new results.

\begin{itemize}
    \item The left variable inclusion companion of any logic is, at least, weakly paraconsistent. Moreover, if the law of non-contradiction fails in the original system, then it also fails in its left variable inclusion companion, hence in that case, the latter becomes strongly paraconsistent.
    \item For any logic induced by a Hilbert-style presentation, the restricted rules companion of it can be defined by keeping the same axioms and imposing variable inclusion restrictions on the rules of inference. This new logic is also, at least, weakly paraconsistent, and is strongly paraconsistent if the law of non-contradiction fails in the original logic.
    \item Suppose $\mathcal{S}=\langle\Fmbf,\vdash\rangle$ is a logic induced by a Hilbert-style presentation, and $\mathcal{S}^l=\langle\Fmbf,\lvari\rangle$ and $\mathcal{S}^{re}=\langle\Fmbf,\lrri\rangle$ are its left variable inclusion and restricted rules companions, respectively. Then $\lrri\,\subseteq\,\lvari$, but the converse is not always true. We have provided examples of logics for which the two companions are different.
    \item If the Deduction theorem holds in a logic $\mathcal{S}$, then it also holds in its left variable inclusion companion, $\mathcal{S}^l$. If the converse of the Deduction theorem holds in $\mathcal{S}$, then a restricted version of it holds in $\mathcal{S}^l$.
    \item If $\mathcal{S}$ is a logic induced by a Hilbert-style presentation, MP is a rule of inference of $\mathcal{S}$, and the Deduction theorem holds in $\mathcal{S}$, then $\mathcal{S}^l=\mathcal{S}^{re}$.
    \item There are paraconsistent logics that are neither the left variable inclusion companion nor the restricted rules companion of any logic.
\end{itemize}

The following are some possible directions for future work.

Instead of logics with algebraic counterparts featuring one contaminating element, one might investigate the logics corresponding to systems with finitely many or even infinitely many such elements and the algebraic issues arising out of this. Work in this line has recently been initiated in \cite{CiuniFergusonSzmuc2019}. The logical implications of such kind of algebraic studies is our area of interest.
 
It may be noted that the definition of the restricted rules companion of a logic can be extended to include logics presented via other proof systems. So, the following question comes up naturally. Can we mimic these techniques for logics which do not have Hilbert-style presentations? Work in this line has recently been initiated in \cite{Paoli2019}. 


\bibliographystyle{plain}
\bibliography{rrip-bib}

\end{document}